\documentclass[a4paper,11pt,pdf]{amsart}
\usepackage{enumerate, amsmath, amsfonts, amssymb, amsthm, thmtools, wasysym, graphics, graphicx, xcolor, frcursive,comment,bbm}

\usepackage{etex}
\usepackage{lscape}
\usepackage{tikz-cd}

\makeatletter
\newcommand{\thickhline}{%
    \noalign {\ifnum 0=`}\fi \hrule height 1pt
    \futurelet \reserved@a \@xhline
}

\definecolor{darkblue}{rgb}{0.0,0,0.7} 
 
\definecolor{darkred}{rgb}{0.7,0,0} 
\usepackage{hyperref}
\usepackage[all]{xy}
\usepackage[T1]{fontenc}
\usepackage{adjustbox}

\usepackage{vmargin}            
\setmarginsrb{3cm}{2.5cm}{3cm}{2.5cm}{0cm}{0.6cm}{0cm}{0cm}

\usepackage{caption,lipsum}
\captionsetup{width=0.93\textwidth}

\usepackage{graphicx}                  
\usepackage{pstricks,pst-plot,pst-text,pst-tree,pst-eps,pst-fill,pst-node,pst-math}
\usepackage{setspace}
\usepackage{multicol}

\newcommand{\darkred}{\color{darkred}} 
\newcommand{\defn}[1]{\emph{\darkred #1}}

\def\H{\mathcal{H}}

\usepackage{etex}

\newtheorem{theorem}{Theorem}[section]
\newtheorem{prop}[theorem]{Proposition}
\newtheorem{lemma}[theorem]{Lemma}
\newtheorem{cor}[theorem]{Corollary}

\theoremstyle{definition}
\newtheorem{definition}[theorem]{Definition}
\newtheorem{rmq}[theorem]{Remark}
\newtheorem{exple}[theorem]{Example}
\newtheorem{question}[theorem]{Question}

\numberwithin{equation}{section}

\title[Toric reflection groups]{Toric reflection groups}

\author{Thomas Gobet}
\address{Institut Denis Poisson, CNRS UMR 7350, Faculté des Sciences et Techniques, Université de Tours, Parc de Grandmont, 
37200 TOURS, France}
\email{thomas.gobet@lmpt.univ-tours.fr}

\begin{document}
\maketitle

\begin{abstract}
Several finite complex reflection groups have a braid group which is isomorphic to a torus knot group. The reflection group is obtained from the torus knot group by declaring meridians to have order $k$ for some $k\geq 2$, and meridians are mapped to reflections. We study all possible quotients of torus knot groups obtained by requiring meridians to have finite order. Using the theory of $J$-groups of Achar and Aubert, we show that these groups behave like (in general infinite) complex reflection groups of rank two. The large family of "toric reflection groups" which we obtain includes, among others, all finite complex reflection groups of rank two with a single conjugacy class of reflecting hyperplanes, as well as Coxeter's truncations of the $3$-strand braid group. We classify these toric reflection groups and explain why the corresponding torus knot group can be naturally considered as its braid group. In particular, this yields a new infinite family of reflection-like groups admitting a braid group which is a Garside group. Moreover, we show that a toric reflection group has cyclic center by showing that the quotient by the center is isomorphic to the alternating subgroup of a Coxeter group of rank three. To this end we use the fact that the center of the alternating subgroup of an irreducible, infinite Coxeter group of rank at least three is trivial. Several ingredients of the proofs are purely Coxeter-theoretic, and might be of independent interest.   
\end{abstract}

\tableofcontents

\thispagestyle{empty}

\section{Introduction}

The $3$-strand braid group has many interesting quotients admitting realizations as reflection groups of rank two over $\mathbb{R}$ or $\mathbb{C}$, the most famous one being the symmetric group $\mathfrak{S}_3$. The symmetric group $\mathfrak{S}_3$ is a finite real reflection group or finite Coxeter group, and the $3$-strand braid group is isomorphic to the corresponding Artin--Tits group. In this case the quotient map from $\mathcal{B}_3$ to $\mathfrak{S}_3$ maps Dehn twists to reflections. This situation admits a well-known generalization, where the symmetric group is replaced by any (not necessarily finite) Coxeter group and the braid group by the corresponding Artin--Tits group (see for instance~\cite[Section 6.6]{KT} for an introduction to the topic). In this situation, one has a natural recipe to attach to a Coxeter group a group playing the role of the braid group, and sharing the same kind of (in general conjectural) properties---absence of torsion, solvable word and conjugacy problem, etc.

In the complex case, that is, if we start with a finite \textit{complex} reflection group, there is also a way to attach a group with the same kind of properties as the braid group: one defines the braid group attached to a finite complex reflection group as the fundamental group of the space of regular orbits in the natural representation of the reflection group~\cite{BMR}. This allows one to recover the $3$-strand braid group from several finite complex reflection groups of rank two arising as quotients---like for instance $G_4$ in Shephard and Todd notation. In the cases where the reflection group can be realized over the real numbers, \emph{i.e.}, in the cases where it is a finite Coxeter group, the obtained braid group is isomorphic to the Artin--Tits group of the Coxeter group~\cite{Brieskorn}. In the complex case, the generalizations of Dehn twists are given by so-called braided reflections, which are also mapped to reflections in the quotients. 

In this paper, we study another kind of generalizations of reflection quotients of the $3$-strand braid group. A feature of the $3$-strand braid group is that it is isomorphic to the knot group of the trefoil knot~\cite[Section 1.1.4]{KT}, which is a torus knot (\emph{i.e.}, a knot which lies on the surface of a torus). Several finite complex reflection groups with a single conjugacy class of reflecting hyperplanes have their braid group isomorphic to a torus knot group~\cite{Bannai}, and the quotient map from the torus knot group to the complex reflection group maps meridians---which one can consider as generalizations of Dehn twists---to reflections. Moreover, as in the aforementioned cases, a presentation of the reflection group is obtained from a presentation of the torus knot group having meridians as generators by setting $x^k=1$ for some $k\geq 2$ for any (equivalently every) meridian $x$. The obtained reflection groups---which include the aforementioned quotients of the $3$-strand braid group---are of rank two, \emph{i.e.}, they are reflection groups over $\mathbb{C}^2$. The aim of this paper is to study all possible quotients of all possible torus knot groups arising in this way. Note that surjective maps from link groups onto Coxeter or reflection groups, which have the property to send meridians to reflections, arose in a knot-theoretic setting in the study of the so-called \textit{meridional rank conjecture} stating an equality between the bridge number of a link $L$ and its meridional rank (see~\cite{Baader_1, Baader_2}). Indeed, if one finds such a surjective map from $\Gamma=\pi_1(S^3\backslash L)$ onto $W$, where $L$ is a link and $W$ a reflection group, then the meridian rank of $L$, that is, the minimal number of meridians needed to generate $\Gamma$, is necessarily bounded below by the reflection rank of $W$, that is, the minimal number of reflections needed to generate $W$. This conjecture is solved for torus links~\cite{merid_torus}.  

The aforementioned quotients of torus knot groups studied in this paper are infinite in general. We show that they are generalizations of complex reflection groups of rank two, with the torus knot group playing the role of an attached "braid group", and give a few fundamental results on their structure as well as a classification.

\medskip

To be more precise, let $n,m\geq 2$ be two relatively prime integers, with $n<m$. The \defn{torus knot group} $G(n,m)$ is the fundamental group of the complement of the torus knot $T_{n,m}$ in $\mathbb{R}^3$. It admits (see for instance~\cite[Chapter 3, Section C]{Rolfsen}) the well-known presentation \begin{equation}\label{pres_0} G(n,m)=\langle \ x, y \ \vert\ x^n=y^m \ \rangle.\end{equation} Another presentation, which we shall call \textit{classical}, is given by \begin{equation}\label{pres_2}\langle \ x_1, x_2, \dots, x_n \ \vert\ 
\underbrace{x_1 x_2 \cdots}_{m~\text{factors}} = \underbrace{x_2 x_3\cdots}_{m~\text{factors}} = \dots = \underbrace{x_n x_1 \cdots}_{m~\text{factors}}
\ \rangle,\end{equation} where indices are taken modulo $n$. See for instance~\cite[6.4, Problem 10]{Dual}\footnote{An explicit isomorphism between $G(n,m)$ and the group with presentation~\ref{pres_2} is given by $x\mapsto x_1 x_2 \cdots x_m$, $y\mapsto x_1 x_2 \cdots x_n$. Note that in Presentation~\ref{pres_2}, the generators are meridians. To see this, one can for instance use~\cite[Proposition 3.38(b)]{Knots}, where it is shown that if $a,b\in\mathbb{Z}$ are such that $an-bm=1$, then $y^a x^{-b}$ is a meridian. Using the defining relations of Presentation~\ref{pres_2}, we see that under the isomorphism given above, the element $y^a x^{-b}$ is mapped to $x_i^{\pm 1}$ for some $i$, hence there is an $i$ such that $x_i$ is a meridian. But since $n$ and $m$ are coprime, all $x_i$'s are conjugate, hence they are all meridians.}. Using the fact that the torus knots $T_{n,m}$ and $T_{m,n}$ are isotopic, one obtains a third presentation, where generators are also meridians, which we shall call \textit{dual}, given by \begin{equation}\label{pres_3}\langle \ y_1, y_2, \dots, y_m \ \vert\ 
\underbrace{y_1 y_2 \cdots}_{n~\text{factors}} = \underbrace{y_2 y_3\cdots}_{n~\text{factors}} = \dots = \underbrace{y_m y_1 \cdots}_{n~\text{factors}}\ \rangle. \end{equation}    
In the case where $n=2$, $m=3$ (more generally for $m$ odd), the classical presentation above is the classical presentation of the $3$-strand braid group $\mathcal{B}_3$ (more generally, of the Artin--Tits group of dihedral type $I_2(m)$), while the dual presentation is Birman-Ko-Lee's presentation~\cite{BKL} (more generally the \textit{dual} presentation in the sense of~\cite{Dual}). This explains our terminology. Presentation~\ref{pres_0} is obtained from the classical one by setting $x=x_1 x_2 x_1$, $y=x_1 x_2$, while the third one is obtained by setting $y_1=x_1$, $y_2=x_2$, $y_3=x_1 x_2 x_1^{-1}$. The symmetric group $\mathfrak{S}_3$ is obtained from either the classical or dual presentations by adding the relations $a^2=1$ for all generators $a$ (or for a single generator $a$, as they are all conjugate). The complex reflection groups mentioned above are obtained in a similar way by setting $a^k=1$ for the generators $a$ and some $k\geq 2$, and the images of the generators (and more generally the meridians) in the quotient are reflections. More precisely, in Shephard-Todd notation 
\begin{itemize}
\item $G_4$, $G_8$, respectively $G_{16}$ are obtained from the classical presentation of $\mathcal{B}_3\cong G(2,3)$ by adding the relations $x_i^k=1$ for all $i$, where $k=3, 4$ and $5$ respectively,
\item $G_{12}$ is obtained from the classical presentation of $G(3,4)$ by adding the relations $x_i^2=1$ for all $i$,
\item $G_{22}$ is obtained from the classical presentation of $G(3,5)$ by adding the relations $x_i^2=1$ for all $i$,
\item $G_{20}$ is obtained from the classical presentation of $G(2,5)$ by adding the relation $x_i^3=1$ for all $i$,
\item For odd $m$, the group $G(m,m,2)$, which is also the dihedral Coxeter group of type $I_2(m)$, is obtained from $G(2,m)$ by adding the relation $x_i^2=1$ for all $i$. Note that in this case $G(2,m)$ is the Artin--Tits group of type $I_2(m)$. 
\end{itemize} 
Generalizing the groups obtained in the first point above, Coxeter~\cite{coxeter_factor} studied the quotient of $\mathcal{B}_3$ (and more generally $\mathcal{B}_n$) by the relations $x_1^k=x_2^k=1$ where $k\geq 2$, and showed that this quotient is finite if and only if $k\leq 5$. He also showed that these groups, which are sometimes called \textit{truncated braid groups}, admit a complex representation as groups generated by (pseudo-)reflections. For $k\leq 5$ he showed that this representation is faithful, and that the group is finite if and only if $k\leq 5$. 

Let $n,m$ be as above and $k\geq 2$. We define a three-parameter family of groups generalizing all the examples given above, called \defn{toric reflection groups}, by setting \begin{equation}\label{pres_wknm}W(k,n,m):=\bigg\langle \ x_1, x_2, \dots, x_n \ \bigg\vert\ 
\begin{matrix} x_i^k=1~\text{for}~i=1, \dots, n,\\ \underbrace{x_1 x_2 \cdots}_{m~\text{factors}} = \underbrace{x_2 x_3\cdots}_{m~\text{factors}} = \dots = \underbrace{x_n x_1 \cdots}_{m~\text{factors}} \end{matrix} 
\ \bigg\rangle.\end{equation}

We define the conjugates of the non-trivial powers $x_i^\ell$ (\emph{i.e.}, not equal to the identity) of the generators $x_i$ to be the \defn{reflections} in $W(k,n,m)$. This is partly justified by the fact that in the aforementioned cases where $W(k,n,m)$ is finite, the $x_i$'s are reflections, and more generally by the following fact. We show in Theorem~\ref{thm_11} below that these groups are isomorphic to groups which are part of a family of groups introduced by Achar and Aubert, called \textit{$J$-groups}~\cite{AA}, which are generalizations of complex reflection groups of rank two---see Section~\ref{jgroup} below for precise definitions, and Theorem~\ref{part_I_main} together with Corollary~\ref{coro_single_class} for a more precise and complete statement. Each such group admits a representation as a subgroup of $\mathrm{GL}_2(\mathbb{C})$ generated by (pseudo-)reflections~\cite[Section 4]{AA}. Achar and Aubert showed that a $J$-group is finite if and only if it is a finite complex reflection group of rank two~\cite[Theorem 1.2]{AA}.  

\begin{theorem}[Toric reflection groups are $J$-groups]\label{thm_11}
The group $W(k,n,m)$ is isomorphic to the $J$-group $J\begin{pmatrix} k & n & m \\  & n & m \end{pmatrix}$ of Achar and Aubert. Under this isomorphism, the generators of $W(k,n,m)$ correspond to elements of the $J$-group acting by reflections in Achar and Aubert's representation.  
\end{theorem}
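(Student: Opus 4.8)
The plan is to prove the isomorphism by matching presentations, using the torus knot group $G(n,m)$ as an intermediary. First I would recall from Section~\ref{jgroup} the explicit presentation of the $J$-group $J\begin{pmatrix} k & n & m \\ & n & m\end{pmatrix}$, and observe that $W(k,n,m)$ is by definition the quotient of $G(n,m)$ in its classical presentation~\eqref{pres_2} by the normal subgroup generated by $x_1^k$. Because $n$ and $m$ are coprime, the generators $x_1,\dots,x_n$ are all conjugate in $G(n,m)$, so this single relation already forces $x_i^k=1$ for every $i$ in the quotient; this reconciles~\eqref{pres_wknm} with the prescription of ``adding $x_i^k=1$ for one (equivalently every) meridian''. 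A useful preliminary computation is that, writing $\delta:=x_1 x_2\cdots x_m$ for the common value of the length-$m$ products, one has $\delta^{-1} x_i \delta = x_{i+m}$, so that conjugation by $\delta$ shifts indices by $m$ and hence cyclically permutes the $x_i$ since $m$ generates $\mathbb{Z}/n\mathbb{Z}$.

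Next I would pass to the two-generator presentation $G(n,m)=\langle x,y\mid x^n=y^m\rangle$ via the isomorphism of the footnote, $x\mapsto x_1 x_2\cdots x_m$ and $y\mapsto x_1\cdots x_n$, and track the meridian. If $a,b\in\mathbb{Z}$ satisfy $an-bm=1$, then $y^a x^{-b}$ is a meridian sent to some $x_i^{\pm 1}$, so that
$$W(k,n,m)\cong\langle\ x,y\ \mid\ x^n=y^m,\ (y^a x^{-b})^k=1\ \rangle.$$
I would then perform Tietze transformations to bring this presentation into the normal form defining the $J$-group, reading off how the three parameters $k$, $n$, $m$ are distributed across the defining matrix $\begin{pmatrix} k & n & m \\ & n & m\end{pmatrix}$. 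Here the element $z:=x^n=y^m$, which generates the infinite cyclic center of $G(n,m)$, should match the central generator of the $J$-group (consistent with the later description of $W(k,n,m)$ as a central extension of the alternating subgroup of a rank-three Coxeter group), while $y^a x^{-b}$ matches the reflection generator of order $k$. Coprimality of $n$ and $m$ is exactly what guarantees that this change of generators is invertible.

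For the second assertion I would trace each generator $x_i$ through the chain of identifications to its image in the $J$-group, and then invoke Achar and Aubert's construction of the faithful representation $J(\cdots)\hookrightarrow\mathrm{GL}_2(\mathbb{C})$~\cite[Section 4]{AA}, in which the distinguished generators act as (pseudo-)reflections. Since each $x_i$ is conjugate to the meridian $y^a x^{-b}$, which we have identified with a distinguished reflection generator, and since a conjugate of a pseudo-reflection in $\mathrm{GL}_2(\mathbb{C})$ is again a pseudo-reflection (conjugation preserves the eigenvalue pair $(1,\zeta)$), every $x_i$ acts by a reflection, giving the final statement.

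The main obstacle I expect is the bookkeeping in the middle step: verifying that the Tietze transformations starting from $\langle x,y\mid x^n=y^m,\ (y^a x^{-b})^k=1\rangle$ land precisely on Achar and Aubert's presentation, with the correct assignment of $k,n,m$ to the entries of the matrix. This is where the combinatorics of the classical presentation~\eqref{pres_2} — in particular the index-shift computation above — must be reconciled with the intrinsic definition of the $J$-group, and any off-by-one or parity subtlety (such as the even/odd $m$ dichotomy already visible in the worked examples) would surface here.
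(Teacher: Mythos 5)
Your plan has a genuine gap at its central step: you propose to ``recall from Section~\ref{jgroup} the explicit presentation of the $J$-group $J\begin{pmatrix} k & n & m \\ & n & m \end{pmatrix}$'' and then match presentations by Tietze transformations, but no such presentation exists a priori. Only the \emph{parent} group $G=J\begin{pmatrix} k & n & m \\ & & \end{pmatrix}$ is defined by generators and relations; the $J$-group in the statement is defined as the normal closure of $s$ in $G$, an index-$nm$ normal subgroup with no given presentation. Producing a presentation of this subgroup is precisely the content of the theorem, and it is the hard part: the paper carries it out via the Reidemeister--Schreier algorithm applied to the Schreier transversal $\{u^i t^j\}$, expressing all Schreier generators $s_{i,j}$, $u_{i,j}$ in terms of the $n$ elements $x_i=t^{i-1}st^{1-i}$ (Proposition~\ref{prop_generators}), and then verifying that every Reidemeister--Schreier relation collapses to $x_i^k=1$ and the cyclic relations (Lemma~\ref{rel_equiv} and the final check). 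Your side of the intended comparison is fine --- $W(k,n,m)\cong\langle x,y\mid x^n=y^m,\ (y^ax^{-b})^k=1\rangle$ is indeed a presentation of the quotient of $G(n,m)$ by the normal closure of the $k$-th power of one meridian, and conjugacy of the $x_i$ justifies imposing the relation for a single meridian --- but there is nothing on the other side to Tietze-transform \emph{to}; the plan presupposes its own key output.

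Two smaller inaccuracies would also need repair. First, you invoke ``Achar and Aubert's construction of the faithful representation'': the representation $\rho$ is \emph{not} faithful in general (see Section~\ref{rep} and Example~\ref{ex_unfaith}), and in fact faithfulness is never needed --- for the second assertion it suffices that each $x_i$ maps to a $G$-conjugate of $s$, and $\rho(s)$ is a pseudo-reflection whose conjugates act by pseudo-reflections, exactly as you argue at the end. Second, your expectation that $z=x^n=y^m$ ``should match the central generator of the $J$-group'' is off: the central generator $stu$ of $G$ need not lie in the subgroup $H$, and the image of the central element $c=(x_1\cdots x_n)^m$ of $W(k,n,m)$ is $(stu)^{nm}$ (this is the content of Theorem~\ref{thm_alt}(2)), so any bookkeeping built on matching $z$ with $stu$ itself would go wrong.
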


This allows one to consider Presentation~\ref{pres_wknm} as a presentation of a reflection group in some sense. Nevertheless, let us point out that Achar and Aubert's representation is \textit{not} faithful in general: see Section~\ref{rep} below. 

We will say that two toric reflection groups $W(k,n,m)$, $W(k',n',m')$ with respective sets of reflections $R, R'$ are \defn{reflection isomorphic}, written $W(k,n,m)\cong_{\mathrm{ref}}W(k',n',m')$, if there is a group isomorphism $\varphi: W(k,n,m)\longrightarrow W(k',n',m')$ such that $\varphi(R)=R'$. The following statement classifies toric reflection groups (see Theorem~\ref{thm_clas} below)    

\begin{theorem}[Classification of toric reflection groups]\label{thm:classification}
Let $k,k',n,n',m,m'\geq 2$ with $n<m$, $n'<m'$, $n$ and $m$ coprime, and $n'$ and $m'$ coprime. Then $$W(k,n,m)\cong_{\mathrm{ref}} W(k',n',m')\Leftrightarrow k=k',n=n',\text{~and~}m=m'.$$
\end{theorem}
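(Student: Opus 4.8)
The plan is to prove the substantive implication ($\Rightarrow$); the converse is immediate, since equal parameters yield literally the same presentation and the identity map is then a reflection isomorphism. So suppose $\varphi\colon W(k,n,m)\to W(k',n',m')$ is a reflection isomorphism, carrying the reflection set $R$ bijectively onto $R'$. The strategy is to recover $k$ from the reflection structure, then recover the unordered pair $\{n,m\}$ from the abstract group by passing to the quotient by the center.

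First I would determine $k$. Each generator $x_i$ has order exactly $k$: the relation $x_i^k=1$ gives order dividing $k$, while by Theorem~\ref{thm_11} (and the structural description recalled below) its image in the central quotient lies in a vertex group of order $k$, forcing order exactly $k$. Hence a nontrivial power $x_i^\ell$ has order $k/\gcd(k,\ell)$, and as $\ell$ runs over $1,\dots,k-1$ these orders run over exactly the divisors $d\geq 2$ of $k$. Thus the set of orders of elements of $R$ equals $\{d : d\mid k,\ d\geq 2\}$, whose maximum is $k$. Since $\varphi$ is a group isomorphism it preserves element orders, and since $\varphi(R)=R'$ it matches the two order-sets; comparing maxima gives $k=k'$.

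Next I would pass to the quotient by the center. The center $Z=Z(W(k,n,m))$ is characteristic, so $\varphi$ induces an isomorphism $W(k,n,m)/Z\cong W(k',n',m')/Z$. By the structural result announced in the abstract, $W(k,n,m)/Z$ is the alternating subgroup of a rank-three Coxeter group, which one identifies as the von Dyck group $D(n,m,k)=\langle X,Y\mid X^n=Y^m=(XY)^k=1\rangle$ of the triangle $(n,m,k)$, and similarly on the primed side. Combined with $k=k'$ this yields an abstract isomorphism $D(n,m,k)\cong D(n',m',k)$.

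Finally I would recover $\{n,m\}$ from the isomorphism type of $D(n,m,k)$, now that $k$ is known. In the infinite (Euclidean or hyperbolic) case $D(n,m,k)$ acts properly on $\mathbb{E}^2$ or $\mathbb{H}^2$ with quotient the orbifold sphere carrying three cone points of orders $n,m,k$; since a nontrivial elliptic isometry fixes a \emph{unique} point, distinct cone points have non-conjugate stabilizers, so the conjugacy classes of maximal finite cyclic subgroups are exactly the three vertex groups, and their orders recover the multiset $\{n,m,k\}$ as an isomorphism invariant. This gives $\{n,m,k\}=\{n',m',k\}$; cancelling the common $k$ yields $\{n,m\}=\{n',m'\}$, hence $(n,m)=(n',m')$ because $2\leq n<m$ and $2\leq n'<m'$. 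The finite (spherical) triangle groups require separate, direct treatment, since an elliptic of $\mathrm{SO}(3)$ fixes two antipodal points and stabilizers of distinct cone points may fuse (for instance the two order-$3$ cone points of $A_4=D(2,3,3)$ are $\mathrm{SO}(3)$-conjugate); here one identifies the finite group among the cyclic, dihedral, $A_4$, $S_4$ and $A_5$ groups and reads off its triangle. Crucially, because $k$ is already fixed, the genuine coincidences that occur (for example $A_5\cong D(2,3,5)\cong D(2,5,3)\cong D(3,5,2)$, corresponding to $G_{16}$, $G_{20}$, $G_{22}$) involve different values of $k$ and so cannot obstruct the conclusion, while $\gcd(n,m)=1$ and $n<m$ pin down $(n,m)$ uniquely. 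I expect this last step---proving that the triangle type is an isomorphism invariant of $D(n,m,k)$ and clearing the small spherical exceptions---to be the main obstacle; and I note that the reflection hypothesis is used only to fix $k$, which is precisely what lets us cancel $k$ and separate it from the pair $\{n,m\}$.
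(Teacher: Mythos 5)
Your proposal is correct, and its skeleton coincides with the paper's: recover $k$ from the reflection structure, pass to the central quotient via Theorem~\ref{thm_alt}, identify that quotient with the $(k,n,m)$ triangle (von Dyck) group $W_{k,n,m}^+$, recover $\{n,m\}$ from the conjugacy classes of maximal finite subgroups in the infinite case, and check the finite cases by hand. You execute two steps by genuinely different means. For $k=k'$, the paper counts conjugacy classes of reflections (there are exactly $k-1$, via Lemma~\ref{lem:conj_class} applied in the parent $J$-group), whereas you take the maximum order of an element of $R$; both hinge on the same underlying fact---that $x_i$ has order exactly $k$, visible in the central quotient---and your version is slightly more elementary since it avoids separating conjugacy classes. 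More substantially, where the paper proves Proposition~\ref{prop_max_fg} purely Coxeter-theoretically (Proposition~\ref{prop_BH} of Brink--Howlett, Krammer's non-conjugacy of maximal finite standard parabolics, and intersections of parabolic subgroups), you obtain the same invariant geometrically, from the proper action of $D(n,m,k)\cong W_{k,n,m}^+$ on $\mathbb{E}^2$ or $\mathbb{H}^2$: a nontrivial elliptic fixes a unique point, so the three cone-point stabilizers, cyclic of orders $n,m,k$, represent the distinct conjugacy classes of maximal finite subgroups. This buys transparency and explains structurally why the spherical cases need separate treatment (elliptics of $\mathrm{SO}(3)$ fix two antipodal points, so stabilizers can fuse, as in $D(2,3,3)\cong\mathfrak{A}_4$), at the cost of importing standard triangle-group/orbifold facts (finite groups of isometries fix points, the quotient is a sphere with exactly three cone points) rather than the paper's cited Coxeter results; the paper itself remarks (Remark~\ref{interpret_param}) that rank-three Coxeter groups have nice geometric realizations, so your route is very much in its spirit. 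Your finite-case resolution---with $k$ already pinned down, the genuine coincidences $D(2,3,5)\cong D(2,5,3)\cong D(3,5,2)$ occur only across different values of $k$---amounts to the same verification the paper performs with Table~\ref{table_gzg}, and together with $\gcd(n,m)=1$ and $n<m$, $n'<m'$ it correctly yields $(k,n,m)=(k',n',m')$.
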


Note that Presentation~\ref{pres_wknm} can be given as well if $n>m$. But in this case, thanks to the isomorphism $G(n,m)\cong G(m,n)$ and the fact that the toric reflection group is obtained from the torus knot group by killing the $k^{th}$ power of meridians, we get that $W(k,n,m)\cong W(k,m,n)$. Hence the case $n<m$ is sufficient to parametrize all toric reflection groups, and with this assumption, Theorem~\ref{thm:classification} says that for any toric reflection group $W$, there is a single pair $(k,n,m)$ with $W\cong_{\mathrm{ref}} W(k,n,m)$.  

As an immediate corollary of Theorem~\ref{thm:classification}, we get the following definition of the braid group of a toric reflection group, in the spirit of Achar and Aubert's characterization of finite $J$-groups: 
 
\begin{cor}[Braid groups of toric reflection groups]\label{cor_def_bd}
Let $W$ be a toric reflection group with set of reflections $R$. Let $k,n,m\geq 2$ with $n<m$ and $n,m$ coprime such that $W\cong_{\mathrm{ref}} W(k,n,m)$. Set $\mathcal{B}(W,R):=G(n,m)$. Then 
\begin{enumerate}
\item The group $\mathcal{B}(W,R)$ is well-defined, \emph{i.e.}, only depends on the isomorphism class of toric reflection group of $W$. 
\item If $W$ is finite, then $\mathcal{B}(W,R)$ is isomorphic to the braid group of the complex reflection group $W$. 
\end{enumerate}
\end{cor}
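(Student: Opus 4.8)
The plan is to derive Corollary~\ref{cor_def_bd} directly from the classification result in Theorem~\ref{thm:classification} together with the known identification of braid groups of finite complex reflection groups appearing in the examples.

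For part (1), the point is simply that $\mathcal{B}(W,R)$ is defined in terms of a triple $(k,n,m)$, so I must check that the definition does not depend on the chosen triple. Suppose $W$ is a given toric reflection group, and suppose $(k,n,m)$ and $(k',n',m')$ are two triples (each with $n<m$, $n'<m'$, the pairs coprime, all parameters $\geq 2$) such that $W\cong_{\mathrm{ref}} W(k,n,m)$ and $W\cong_{\mathrm{ref}} W(k',n',m')$. Since reflection isomorphism is an equivalence relation (the inverse of a reflection isomorphism is again one, and the composite of two is one, because these are group isomorphisms carrying reflections to reflections), we obtain $W(k,n,m)\cong_{\mathrm{ref}} W(k',n',m')$. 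Theorem~\ref{thm:classification} then forces $k=k'$, $n=n'$ and $m=m'$, so the two triples coincide. Consequently the torus knot group $G(n,m)$ attached to $W$ via any admissible triple is one and the same, and $\mathcal{B}(W,R)$ is well-defined. I expect this to be the easy half, essentially a formal consequence of the classification.

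For part (2), assume $W$ is finite. By definition $W\cong_{\mathrm{ref}} W(k,n,m)$ for the (now unique) admissible triple, and I want to show $\mathcal{B}(W,R)=G(n,m)$ is isomorphic to the braid group of the complex reflection group $W$. The strategy is to invoke the fact, recorded in the discussion preceding the statement, that a $J$-group is finite if and only if it is a finite complex reflection group of rank two~\cite[Theorem 1.2]{AA}, combined with Theorem~\ref{thm_11} which identifies $W(k,n,m)$ with the $J$-group $J\begin{pmatrix} k & n & m \\  & n & m \end{pmatrix}$. Thus finiteness of $W$ pins down exactly which finite complex reflection group arises, and these are precisely the groups $G_4,G_8,G_{16},G_{12},G_{22},G_{20}$ and the dihedral groups $G(m,m,2)$ enumerated in the introduction. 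For each of these, it is known by the work of Bannai~\cite{Bannai} that the associated braid group, defined \`a la Brou\'e--Malle--Rouquier~\cite{BMR} as the fundamental group of the space of regular orbits, is isomorphic to the corresponding torus knot group $G(n,m)$; in the real (dihedral) case this is the Artin--Tits group of type $I_2(m)$, which is again $G(2,m)$. Since for each finite $W$ the triple $(k,n,m)$ producing it is exactly the one read off from the Shephard--Todd list, the equality $\mathcal{B}(W,R)=G(n,m)$ matches Bannai's identification, giving the claimed isomorphism.

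The main obstacle is conceptual rather than computational: one must ensure that the triple $(k,n,m)$ obtained abstractly from the classification is the \emph{same} triple that Bannai's torus-knot-group description uses for each finite complex reflection group on the list. Concretely, this amounts to verifying a finite case check---matching each finite toric reflection group against its Shephard--Todd label and confirming that the $(n,m)$ coming from the $J$-group presentation coincides with the torus knot parameters appearing in~\cite{Bannai}. Once this bookkeeping is in place, both parts follow, and I do not anticipate any genuinely hard analytic or combinatorial step beyond assembling the already-established identifications.
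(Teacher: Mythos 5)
Your proposal is correct and follows exactly the route the paper intends: the paper states this corollary as an immediate consequence of Theorem~\ref{thm:classification} (part (1) via uniqueness of the triple $(k,n,m)$, using that $\cong_{\mathrm{ref}}$ is an equivalence relation) together with the case-by-case identifications for finite groups via Bannai's results and Brieskorn's theorem in the dihedral case, as listed in the introduction and Table~1. The parameter-matching ``bookkeeping'' you flag is precisely the content already recorded in the paper's enumeration of finite toric reflection groups, so nothing further is needed.
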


Note that Schreier~\cite{Schreier} prove that two torus knot groups $G(n,m)$ and $G(n',m')$ ($n<m$ and $n'<m'$) are isomorphic (as abstract groups) if and only if $n=n'$ and $m=m'$ (his result is actually more general as he does not assume the parameters to be coprime). 

The above corollary yields the generalization of reflection quotients of the $3$-strand braid group announced at the beginning of the introduction. Note that every torus knot group is a \textit{Garside group}~\cite[Example 4]{DP} (see also~\cite[Section 3]{Gobet}, and~\cite{Garside} for basics on Garside groups), hence it shares many properties with Artin's braid group and more generally Artin--Tits groups of spherical type, \emph{i.e.}, attached to finite Coxeter groups, which are all Garside groups. Such groups have for instance solvable word and conjugacy problems, and are torsion-free. Note that Artin--Tits groups attached to infinite Coxeter groups are not Garside groups in general (some of them are known to be so-called \textit{quasi-Garside} groups~\cite{Garside}), while by Corollary~\ref{cor_def_bd}, an infinite toric reflection group has an attached "braid group" which is always a Garside group. This is not especially surprising and could already be observed for Coxeter's truncated braid groups~\cite{coxeter_factor}, as toric reflection groups have, even when infinite, properties similar to finite (irreducible) Coxeter groups. For instance, they have a non-trivial cyclic center---see~Corollary~\ref{cor_cent} below.  

A main ingredient in the proof of Theorem~\ref{thm:classification} is the theory of Coxeter groups and their parabolic subgroups, especially in rank three. In the sequel, we show several results which are purely Coxeter-theoretic, and might be of independent interest. To establish Theorem~\ref{thm:classification}, we identify the quotient of a toric reflection group $W$ by its center. More precisely, let $k,n,m$ be as above, and let $W_{k,n,m}$ be the Coxeter group of rank three whose diagram is a triangle with edges labelled by $k,n$, and $m$. Let $W_{k,n,m}^+$ be its alternating subgroup, \emph{i.e.}, the subgroup of elements with signature $1$. Let $c=(x_1 x_2 \cdots x_n)^m\in W(k,n,m)$, which is central in $W(k,n,m)$. Then we show (see Theorem~\ref{thm_alt} below)

\begin{theorem}\label{thm_ses}
The group $W(k,n,m)$ is a central extension of $W_{k,n,m}^+$ by the subgroup $\langle c \rangle$. That is, we have a short exact sequence $$1\longrightarrow \langle c \rangle \longrightarrow W(k,n,m)\longrightarrow W_{k,n,m}^+ \longrightarrow 1$$ 
\end{theorem}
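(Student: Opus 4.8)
The plan is to deduce the short exact sequence from a single isomorphism $W(k,n,m)/\langle c\rangle \cong W_{k,n,m}^+$. Since $c$ is central (as recalled before the statement), $\langle c\rangle$ is a normal cyclic subgroup, so once such an isomorphism is produced the sequence $1\to\langle c\rangle\to W(k,n,m)\to W_{k,n,m}^+\to 1$ is exact, with left map the inclusion and right map the projection modulo $\langle c\rangle$ followed by the isomorphism; no information about the order of $c$ is needed. Thus the whole content is the computation of $W(k,n,m)/\langle c\rangle$. I would start from the torus knot group $G(n,m)=\langle x,y\mid x^n=y^m\rangle$, using the classical fact that its center is the infinite cyclic group generated by $x^n=y^m$ and that the central quotient is the free product $G(n,m)/Z(G(n,m))\cong \mathbb{Z}/n\ast\mathbb{Z}/m$, with $x,y$ mapping to generators $\bar x,\bar y$ of the two free factors. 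Under the isomorphism of the footnote to Presentation~\ref{pres_2} (namely $x\mapsto x_1\cdots x_m$, $y\mapsto x_1\cdots x_n$) the element $c=(x_1\cdots x_n)^m=y^m$ is exactly this central generator.

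Next I would compute the quotient. Because $W(k,n,m)=G(n,m)/\langle\!\langle x_1^k,\dots,x_n^k\rangle\!\rangle$ and the $x_i$ are pairwise conjugate (conjugation by $x_1\cdots x_m$ permutes them cyclically, as $\gcd(m,n)=1$), this normal closure is generated by $x_1^k$ alone. Quotienting first by the central $\langle c\rangle$ then gives
\[
W(k,n,m)/\langle c\rangle\;\cong\;\bigl(\mathbb{Z}/n\ast\mathbb{Z}/m\bigr)/\langle\!\langle \bar p^{\,k}\rangle\!\rangle,
\]
where $\bar p$ is the image in the free product of a meridian. By the footnote, choosing $a,b$ with $an-bm=1$, the meridian $y^a x^{-b}$ is conjugate to some $x_i^{\pm1}$; hence $\bar p$ may be taken to be $\bar y^{a}\bar x^{-b}$. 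Since $an\equiv1\pmod m$ and $bm\equiv-1\pmod n$ force $\gcd(a,m)=\gcd(b,n)=1$, in the free product $\bar p=\bar y^{a}\bar x^{-b}$ is a cyclically reduced word of syllable length two whose two exponents are coprime to the respective orders of the free factors.

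Finally I would identify the target. The alternating subgroup $W_{k,n,m}^+$ is, by the standard description of the rotation subgroup of a triangle Coxeter group, the von Dyck group $\langle A,B,C\mid A^k=B^n=C^m=ABC=1\rangle$ generated by the products of pairs of the three Coxeter generators; writing $B=\bar x$, $C=\bar y$ and $A=C^{-1}B^{-1}$ realises it as $(\mathbb{Z}/n\ast\mathbb{Z}/m)/\langle\!\langle A^k\rangle\!\rangle$ with $A^{-1}=\bar x\bar y$. It then remains to match the two relators $\bar p^{\,k}$ and $A^k$. For this I would use the automorphism of $\mathbb{Z}/n\ast\mathbb{Z}/m$ induced by a pair of automorphisms of the free factors: choosing $\bar x\mapsto\bar x^{b'}$, $\bar y\mapsto\bar y^{a'}$ with $aa'\equiv1\pmod m$ and $-bb'\equiv1\pmod n$ sends $\bar p$ to $\bar y\bar x$, which is conjugate to $\bar x\bar y=A^{-1}$. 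Since the normal closure of a $k$-th power is unchanged under automorphisms, conjugation and inversion, this identifies $\langle\!\langle\bar p^{\,k}\rangle\!\rangle$ with $\langle\!\langle A^{k}\rangle\!\rangle$, and hence $W(k,n,m)/\langle c\rangle$ with $W_{k,n,m}^+$, completing the sequence.

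The routine facts (center and central quotient of a torus knot group, and the rotation subgroup of a triangle group being von Dyck) are standard and citeable; the step requiring genuine care—and the one I expect to be the main obstacle—is controlling the image $\bar p$ of the meridian precisely enough to recognise it, up to a free-factor automorphism, as the standard von Dyck relator $\bar x\bar y$. The coprimality $\gcd(a,m)=\gcd(b,n)=1$ coming from $an-bm=1$ is exactly what makes the normalising automorphism of the free factors available, so this is the crux on which the whole identification turns.
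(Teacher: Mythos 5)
Your proof is correct, and it follows a genuinely different route from the paper's. The paper proves this statement (as part of Theorem~\ref{thm_alt}, via Proposition~\ref{prop_1}) by exhibiting an explicit isomorphism $\overline{W(k,n,m)}\cong W_{k,n,m}^+$ in Bourbaki's presentation $\langle a,b\mid a^k=b^n=(ba^{-1})^m=1\rangle$: the map $x_i\mapsto b^{-i+1}ab^{i-1}$ (coming from the $J$-group coordinates $x_i=t^{i-1}st^{1-i}$ of Theorem~\ref{part_I_main}) is inverted by $a\mapsto x_1$, $b\mapsto(x_1\cdots x_m)^{\ell}$ with $\ell r\equiv 1\ (\mathrm{mod}\ n)$, and checking that this inverse is a homomorphism requires the inductive word computations built on Lemma~\ref{delta}. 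You instead work entirely upstairs in $G(n,m)=\langle x,y\mid x^n=y^m\rangle$: quotient by the central $c=x^n=y^m$ to get $\mathbb{Z}/n\ast\mathbb{Z}/m$, note that the single relator $x_1^k$ (the $x_i$ being pairwise conjugate) descends to $\bar p^{\,k}$ with $\bar p=\bar y^{a}\bar x^{-b}$ the meridian image, and then normalize $\bar p$ to $\bar y\bar x\sim\bar x\bar y$ by an automorphism of the free factors, available exactly because $an-bm=1$ gives $\gcd(a,m)=\gcd(b,n)=1$; since normal closures are insensitive to automorphisms, conjugation and inversion, the quotient is the von Dyck group, which is $W_{k,n,m}^+$ (a fact the paper itself verifies when it checks, in the proof of Theorem~\ref{thm_alt}, that $\langle s,t,u\mid s^k=t^n=u^m=1=stu\rangle$ presents $W_{k,n,m}^+$). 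Your argument is shorter, bypasses both the Reidemeister--Schreier presentation of Theorem~\ref{part_I_main} and the word-level computations, and correctly observes that no knowledge of the order of $c$ is needed; as a small bonus, the element $y^ax^{-b}$ does not even depend on the choice of $(a,b)$, since $y^m=x^n$ is central. What the paper's heavier computation buys in exchange is an isomorphism given explicitly on the generators $x_i$, which is then reused for the commutative diagram with the parent $J$-group in Theorem~\ref{thm_alt}(2) and for the resulting new presentation of $W_{k,n,m}^+$; your identification, assembled from several abstract steps (including the footnote's identification of the meridian $y^ax^{-b}$ with a conjugate of some $x_i^{\pm1}$, on which, as you rightly note, the crux rests), fully proves Theorem~\ref{thm_ses} as stated, but would need extra bookkeeping to recover those refinements.
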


In most cases, the group $W_{k,n,m}$ is infinite and irreducible. The determination of the center of a toric reflection groups uses the following result, which is of independent interest (see Proposition~\ref{prop_2} below).

\begin{prop}[Center of alternating subgroups of Coxeter groups]\label{prop_centre_coxeter}
Let $(W,S)$ be a Coxeter system of rank at least $3$. Let $W^+$ be the alternating subgroup of $W$. Then the center $Z(W^+)$ of $W^+$ is included in the center of $W$. In particular, if $(W,S)$ is infinite, irreducible and of rank at least three, then $Z(W^+)$ is trivial. 
\end{prop}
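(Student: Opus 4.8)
The plan is to prove the inclusion $Z(W^+)\subseteq Z(W)$; the ``in particular'' then follows at once, since an infinite irreducible Coxeter group has trivial center, so $Z(W^+)\subseteq Z(W)=\{1\}$ in that case. Fix $z\in Z(W^+)$. As $W^+$ has index $2$ in $W$, it suffices to show that $z$ commutes with every $s\in S$. First I would record two elementary observations. Since $Z(W^+)$ is a characteristic abelian subgroup of $W^+$, the conjugation action of $W$ on it is trivial on $W^+$ and hence factors through $W/W^+\cong\mathbb{Z}/2$; consequently $w:=szs^{-1}$ is independent of the chosen $s\in S$, and $z\in Z(W)$ is equivalent to $c:=zw^{-1}=1$. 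A short computation using $s^2=1$ gives $c\in Z(W^+)$ and $scs^{-1}=c^{-1}$ for every $s\in S$, while $c=[z,s]=(zsz^{-1})s^{-1}$ is manifestly a product of two reflections.

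The reducible case is then easy and does not even use the rank hypothesis on the factors. Writing $W=W_1\times\cdots\times W_k$ with $k\ge 2$ and $z=(z_1,\dots,z_k)$, for each $i$ and any odd $g_i\in W_i$ I can pick a generator $h_j$ of some other factor $W_j$ ($j\ne i$); the element with $i$-th coordinate $g_i$, $j$-th coordinate $h_j$, and all others trivial lies in $W^+$, so commuting with $z$ forces $z_i$ to commute with $g_i$. Combined with commutation against the even part of $W_i$, this yields $z_i\in Z(W_i)$ for all $i$, hence $z\in\prod_i Z(W_i)=Z(W)$.

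So the heart of the matter is the irreducible case, which I would attack through the geometric representation $\rho\colon W\hookrightarrow \mathrm{GL}(V)$ (faithful by Tits), with its $W$-invariant form $B$ and radical $\mathrm{rad}(B)$, on which $W$ acts trivially. Set $C=\rho(c)$; the goal is $C=I$. Since $c$ is a product of two reflections, $\mathrm{im}(C-I)$ is contained in the span of the two corresponding roots, so $\dim\mathrm{im}(C-I)\le 2$. From $scs^{-1}=c^{-1}$ one checks that $\ker(C-I)$ and $\mathrm{im}(C-I)$ are $W$-stable, while $C$ commutes with $\rho(W^+)$. The key structural input is the standard root-propagation fact for irreducible $(W,S)$: any $W$-submodule $U\subseteq V$ with $U\not\subseteq\mathrm{rad}(B)$ equals $V$ (some $\alpha_s\in U$, and connectedness of the Coxeter graph spreads this to all simple roots). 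As $\dim\mathrm{im}(C-I)\le 2<|S|$, the submodule $\mathrm{im}(C-I)$ is proper, hence $\mathrm{im}(C-I)\subseteq\mathrm{rad}(B)$, i.e. $C$ acts as the identity on $\overline V:=V/\mathrm{rad}(B)$. If $B$ is nondegenerate this already gives $C=I$. Otherwise $\ker(C-I)$ is a proper (as $C\ne I$) $W$-submodule containing $\mathrm{rad}(B)$, hence equal to it; then $C-I$ induces an injective, $W^+$-equivariant map $\overline V\hookrightarrow\mathrm{rad}(B)$ into the trivial module, forcing $W^+$ to act trivially on $\overline V$. But $\overline V^{W^+}$ is a $W$-submodule of the nontrivial irreducible module $\overline V$, hence $0$, a contradiction. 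Thus $C=I$, so $c=1$ and $z\in Z(W)$.

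The main obstacle I anticipate is precisely the degenerate case $\mathrm{rad}(B)\ne 0$ (affine and other non-finite types), where the dimension count alone no longer closes the argument and one must run the equivariance step above; making airtight both the submodule structure of the reflection representation and the nontriviality of the $W^+$-action on $\overline V$ (equivalently $\overline V^{W^+}=0$) is where the genuine work lies, and is the place where the rank $\ge 3$ hypothesis is really consumed.
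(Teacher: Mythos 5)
Your reduction steps are sound: the observation that conjugation on $Z(W^+)$ factors through $W/W^+$, the identities $c=[z,s]=(zsz^{-1})s^{-1}\in Z(W^+)$ and $scs^{-1}=c^{-1}$, the direct treatment of the reducible case, and the submodule analysis in the geometric representation (properness of $\mathrm{im}(C-I)$ via $\dim\leq 2<|S|$, hence $\mathrm{im}(C-I)\subseteq\mathrm{rad}(B)$, and $\ker(C-I)=\mathrm{rad}(B)$ when $C\neq I$) are all correct, and this route is genuinely different from the paper's, which is purely combinatorial. But there is a genuine gap exactly where you flag it: the final assertion that $\overline{V}^{W^+}$ is a $W$-submodule of the nontrivial irreducible module $\overline{V}$, ``hence $0$'', is circular as written. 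Irreducibility only gives $\overline{V}^{W^+}\in\{0,\overline{V}\}$; excluding $\overline{V}$ means proving that $W^+$ acts nontrivially on $\overline{V}$, which is precisely what your contradiction requires. That this is not automatic is shown by the infinite dihedral group ($|S|=2$), where every one of your preceding steps goes through, $W^+$ really does act trivially on the one-dimensional $\overline{V}$, and indeed $Z(W^+)=W^+\not\subseteq Z(W)$; no dimension count can rule this out, since your own equalities force $\dim\overline{V}=\dim\mathrm{im}(C-I)$, which is perfectly consistent.

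The gap is closable, and cheaply, which is why I would call your approach viable once patched. If $W^+$ acts trivially on $\overline{V}$, then $\bar\rho(s)=\bar\rho(t)$ for all $s,t\in S$, since $st\in W^+$. Comparing $\bar\rho(s)\bar v-\bar v=-2B(v,\alpha_s)\bar\alpha_s$ with the analogous formula for $t$ and taking $v=\alpha_s$ gives $\bar\alpha_s=B(\alpha_s,\alpha_t)\bar\alpha_t$, whence $1=B(\alpha_s,\alpha_s)=B(\alpha_s,\alpha_t)^2$; since $B(\alpha_s,\alpha_t)=-\cos(\pi/m_{st})\leq 0$, this forces $m_{st}=\infty$ and $\bar\alpha_s=-\bar\alpha_t$ for \emph{every} pair $s\neq t$. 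Taking three distinct $s,t,r$ (this is exactly where rank $\geq 3$ is consumed, as you predicted) yields $\bar\alpha_r=-\bar\alpha_r$, contradicting $B(\alpha_r,\alpha_r)=1$. For comparison, the paper's proof is shorter and avoids both the irreducible/reducible split and all representation theory: for central $x$ and $s,t\in S$ one gets $st=s't'$ with $s'=xsx^{-1}$, $t'=xtx^{-1}$; by Dyer's lemma the reflection subgroup $\langle s,t,s',t'\rangle$ is dihedral, its canonical generators must be $s$ and $t$, so $s'\in\langle s,t\rangle$; if $s'\neq s$ then every reduced word for $s'$ contains $t$, while the same argument with a third generator $r$ puts $s'\in\langle s,r\rangle$, a contradiction, so $s'=s$ for all $s$.
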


Together with~\ref{thm_ses} and a case-by-case check in the cases where $W_{k,n,m}$ is finite, Proposition~\ref{prop_centre_coxeter} yields (see Theorem~\ref{thm_alt} below):

\begin{cor}[Center of toric reflection groups]\label{cor_cent}
The center of $W(k,n,m)$ is cyclic, generated by $c$. 
\end{cor}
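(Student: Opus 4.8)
Throughout, write $Z(\cdot)$ for the center and recall from Theorem~\ref{thm_ses} that $c=(x_1x_2\cdots x_n)^m$ generates the kernel $\langle c\rangle$ of the projection $\pi\colon W(k,n,m)\twoheadrightarrow W_{k,n,m}^+$, and that this kernel is central. The inclusion $\langle c\rangle\subseteq Z(W(k,n,m))$ is therefore immediate, and the whole point is the reverse inclusion. The plan is to exploit the surjection $\pi$: since a surjective homomorphism sends central elements to central elements, we have $\pi\big(Z(W(k,n,m))\big)\subseteq Z(W_{k,n,m}^+)$. Consequently, if we can show that $Z(W_{k,n,m}^+)$ is trivial, then $Z(W(k,n,m))\subseteq\ker\pi=\langle c\rangle$, and combined with the obvious inclusion this gives $Z(W(k,n,m))=\langle c\rangle$, which is cyclic as desired. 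Everything thus reduces to proving $Z(W_{k,n,m}^+)=1$.

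The group $W_{k,n,m}$ is a Coxeter group of rank three, so Proposition~\ref{prop_centre_coxeter} applies: in all cases it gives $Z(W_{k,n,m}^+)\subseteq Z(W_{k,n,m})$, and since $Z(W_{k,n,m}^+)$ is a subgroup of $W_{k,n,m}^+$ we in fact obtain $Z(W_{k,n,m}^+)\subseteq Z(W_{k,n,m})\cap W_{k,n,m}^+$. First I would observe that whenever $W_{k,n,m}$ is infinite it is automatically irreducible: its Coxeter diagram is a triangle with edge labels $k,n,m$, and this diagram is disconnected only when two of these labels equal $2$, which under the standing hypotheses $n<m$ coprime forces $k=n=2$ and hence a finite group $A_1\times I_2(m)$. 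Thus in the infinite case the second assertion of Proposition~\ref{prop_centre_coxeter} applies directly and yields $Z(W_{k,n,m}^+)=1$.

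The remaining, and genuinely delicate, case is when $W_{k,n,m}$ is finite; here the triviality conclusion of Proposition~\ref{prop_centre_coxeter} is unavailable, and this is the main obstacle. The plan is to enumerate the finite possibilities---solving $1/k+1/n+1/m>1$ under the constraints gives precisely the path-type groups $A_3$ (from $(k,n,m)=(3,2,3)$), $B_3$ (from $(4,2,3)$ and $(2,3,4)$), $H_3$ (from $(5,2,3),(3,2,5),(2,3,5)$), together with the reducible family $A_1\times I_2(m)$ for odd $m$---and in each to run a signature computation rather than appeal to the second part of the proposition. For $A_3\cong\mathfrak{S}_4$ the center is already trivial; for $B_3$ and $H_3$ the center is $\{1,w_0\}$ with $w_0=-1$ the longest element, whose length equals the number of positive roots ($9$ and $15$ respectively, both odd), so $w_0\notin W_{k,n,m}^+$; and for $A_1\times I_2(m)$ with $m$ odd the center is generated by the single reflection spanning the $A_1$ factor, of odd length one. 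In every case $Z(W_{k,n,m})\cap W_{k,n,m}^+=1$, whence $Z(W_{k,n,m}^+)=1$ by the inclusion recorded above. Combining the finite and infinite cases completes the proof.
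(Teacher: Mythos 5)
Your proof is correct and takes essentially the same route as the paper: it reduces the claim to $Z(W_{k,n,m}^+)=1$ via the central extension of Theorem~\ref{thm_ses}, invokes Proposition~\ref{prop_centre_coxeter} in the infinite (necessarily irreducible, as you verify) case, and settles the finite cases $A_3$, $B_3$, $H_3$, $A_1\times I_2(m)$ by checking that the nontrivial central element has odd length and so misses the alternating subgroup. The only difference is cosmetic: you make explicit the parity computation $\ell(w_0)=9$ resp.\ $15$ for $B_3$ and $H_3$, which the paper leaves implicit.
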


In the case where $W(k,n,m)$ is infinite, we do not know if $c$ has finite order or not (see Remark~\ref{center_finite} below). An explicit identification of the center together with Theorem~\ref{thm_ses} would show that the groups $W(k,n,m)$ have a solvable word problem---see Question~\ref{quest_solv} below and the discussion above it. 

In the case where $W(k,n,m)$ is finite, Theorem~\ref{thm_ses} together with Corollary~\ref{cor_cent} give for the groups listed above a new and more general explanation for the known description of the quotient $W(k,n,m)/Z(W(k,n,m))$, as we see that it is isomorphic to the alternating subgroup of a Coxeter group which can be attached in a uniform way to all the concerned finite complex reflection groups. Note that, when $W(k,n,m)$ is finite, it is known that $k-1$ is the number of conjugacy classes of reflections in $W(k,n,m)$, and that $n$ is the \textit{reflection rank} of $W(k,n,m)$, that is, the minimal number of reflections which are needed to generate $W(k,n,m)$. For arbitrary $W(k,n,m)$, we shall see that it is still true that $k-1$ is the number of conjugacy classes of reflections, but we do not know if $n$ is equal to the reflection rank of $W(k,n,m)$ or not---see Remark~\ref{interpret_param}. A positive answer would be a first step towards another proof of the classification of toric reflection groups given in Theorem~\ref{thm:classification} which may avoid the recourse to Coxeter groups---which has other advantages, as for instance Coxeter groups of rank three have nice geometric realizations. This would also give another way of showing that the meridional rank of the torus knot $T_{n,m}$ ($n<m$) is equal to $n$ (established in~\cite{merid_torus}).

\medskip

The paper is organized as follows. Note that, since the results are not always proven in the same order as they appear in the introduction, and sometimes require more notation than what we introduced above, we have a different numbering of the results in the rest of the paper, and sometimes a slightly different formulation. In Section~\ref{toricj}, we recall from~\cite{AA} a few basic results on $J$-groups, and show that toric reflection groups are $J$-groups, allowing one to consider toric reflection groups as (a generalization of) complex reflection groups of rank two. In Section~\ref{center}, we identify the center of toric reflection groups and link them as explained above to alternating subgroups of Coxeter groups of rank three. In Section~\ref{main}, we use the previously established results to classify toric reflection groups; a main ingredient to this end is the study of finite subgroups of (alternating subgroups of) Coxeter groups of rank three.

\section{$J$-groups and toric reflection groups}\label{toricj}

In this section, we first recall the definition and basic properties of $J$-groups in Subsection~\ref{jgroup} below. These groups, which are infinite in general, were introduced by Achar and Aubert in~\cite{AA} as a generalization of finite complex reflection groups of rank two. They are defined as certain normal subgroups of a family of groups defined by generators and relations, which are themselves $J$-groups. We then show that toric reflection groups are $J$-groups of a certain kind in Subsection~\ref{tarej} by giving an explicit presentation by generators and relations for these $J$-groups using the Reidemeister-Schreier algorithm. Finally, in Subsection~\ref{rep}, we discuss the faithfulness of Achar and Aubert's representation for this family of groups. 

\subsection{$J$-groups: definition and basic properties}\label{jgroup}

Let $a,b,c\geq 2$. Let $J\begin{pmatrix} a & b & c \\ ~ & ~ & ~ \end{pmatrix}$ be the group defined by the following presentation \begin{align*}J\begin{pmatrix} a & b & c \\ ~ & ~ & ~ \end{pmatrix}:=\langle \ s,t,u \ \vert \ s^a = t^b = u^c=1,\ stu=tus=ust \ \rangle.\end{align*}
Let $a',b',c'$ be three pairwise coprime positive integers, such that $k'$ divides $k$ for all $k\in\{a,b,c\}$. Let $J\begin{pmatrix} a & b & c \\ a' & b' & c' \end{pmatrix}$ be the normal closure in $J\begin{pmatrix} a & b & c \\ ~ & ~ & ~ \end{pmatrix}$ of the elements $s^{a'}, t^{b'}$ and $u^{c'}$. These groups were defined by Achar and Aubert in~\cite{AA}, and are called \defn{$J$-groups}. Note that $J\begin{pmatrix} a & b & c \\ 1 & 1 & 1 \end{pmatrix}=J\begin{pmatrix} a & b & c \\~ & ~ & ~ \end{pmatrix}$, hence $J\begin{pmatrix} a & b & c \\~ & ~ & ~ \end{pmatrix}$ is itself a $J$-group, and we may call it the \defn{parent} $J$-group of $J\begin{pmatrix} a & b & c \\ a' & b' & c' \end{pmatrix}$; in general, and also for other $J$-groups, $1$'s will be omitted in the second row of parameters.

Recall that a complex reflection group is a (finite) subgroup $W\subseteq \mathrm{GL}_n(\mathbb{C})$ generated by (pseudo-)reflections, that is, elements of finite order whose space of fixed points is a hyperplane in $\mathbb{C}^n$---see~\cite{Broue, LT} for basics on complex reflection groups. Achar and Aubert's main result is the following

\begin{theorem}[{see~\cite[Theorem 1.2]{AA}}]
A $J$-group is finite if and only if it is a finite complex reflection group of rank two.
\end{theorem}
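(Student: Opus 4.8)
The backward implication is immediate: a finite complex reflection group is finite by definition, so all that requires proof is that a \emph{finite} $J$-group is necessarily (isomorphic to) a complex reflection group of rank two. The plan is to exploit the central element $z:=stu$, reduce the finiteness question to that of an ordinary triangle group, and then identify the finitely many surviving cases with groups on the Shephard--Todd list through the reflection representation.

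First I would record the central extension structure. The relations $stu=tus=ust$ say precisely that $s$ commutes with $tu$, that $t$ commutes with $us$, and that $u$ commutes with $st$; since $tu=s^{-1}z$ and so on, this is equivalent to $z=stu$ being central in the parent group $J\binom{a\ b\ c}{}$. Killing $z$ forces $u=(st)^{-1}$ and turns the three relations into the single relation $(st)^c=1$, so that $J\binom{a\ b\ c}{}/\langle z\rangle$ is the von Dyck (ordinary triangle) group $\Delta(a,b,c)=\langle s,t\mid s^a=t^b=(st)^c=1\rangle$. The analogous computation for the sub-$J$-group $J\binom{a\ b\ c}{a'\ b'\ c'}$ exhibits it as a central extension of $\Delta(a',b',c')$ by the image of $\langle z\rangle$; this is exactly the central-extension picture that Theorem~\ref{thm_ses} makes explicit in the toric case, where $\Delta$ is the alternating subgroup of a rank-three triangle Coxeter group.

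Since a central extension of an infinite group is infinite, finiteness of the $J$-group forces $\Delta(a',b',c')$ to be finite, i.e. $\tfrac1{a'}+\tfrac1{b'}+\tfrac1{c'}>1$, leaving only the spherical triples $(2,2,\ell)$, $(2,3,3)$, $(2,3,4)$, $(2,3,5)$ (the cyclic/dihedral cases together with the tetrahedral, octahedral and icosahedral ones). For each of these I would invoke the two-dimensional representation $\rho$ of~\cite[Section~4]{AA}, in which $s,t,u$ act by pseudo-reflections and the central $z$ acts by a scalar (as it must on any irreducible constituent, by Schur's lemma). Verifying that $\rho$ is faithful on these finite groups identifies $J$ with the subgroup $\rho(J)\subseteq\mathrm{GL}_2(\mathbb{C})$; this subgroup is generated by pseudo-reflections and its generators have no common eigenvector, hence it is a complex reflection group of rank exactly two, which one then matches to the precise Shephard--Todd group by comparing orders and numbers of reflecting hyperplanes. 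Conversely, faithfulness together with finiteness of $\Delta(a',b',c')$ shows that the spherical $J$-groups are genuinely finite, completing the equivalence.

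The main obstacle is the faithfulness of $\rho$, equivalently pinning down the exact order of the central element $z$ inside $J$: a priori the kernel of $\rho$ could be a nontrivial (necessarily central) subgroup of $\langle z\rangle$, so one must compute the center precisely rather than merely bound it. This is where an explicit presentation---obtained for instance by the Reidemeister--Schreier procedure, as used for the toric subfamily later in this paper---does the decisive work; the remaining identification with the Shephard--Todd groups is then a finite and essentially bookkeeping task.
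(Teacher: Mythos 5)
First, a point of comparison: the paper does not prove this statement at all --- it is imported verbatim from Achar--Aubert \cite[Theorem 1.2]{AA} --- so there is no internal proof to match your proposal against; it can only be judged on its own merits, and as it stands it contains a genuine error. Your central-extension reduction identifies the wrong quotient. Killing $z=stu$ in the parent group $G=J\begin{pmatrix} a & b & c \\ & & \end{pmatrix}$ does yield the von Dyck group $\Delta(a,b,c)$, but the sub-$J$-group $H=J\begin{pmatrix} a & b & c \\ a' & b' & c' \end{pmatrix}$ is a central extension of $\ker\bigl(\Delta(a,b,c)\twoheadrightarrow\Delta(a',b',c')\bigr)$ by $H\cap\langle z\rangle$, \emph{not} of $\Delta(a',b',c')$: the image of $H$ in $G/\langle z\rangle\cong\Delta(a,b,c)$ is the normal closure of the images of $s^{a'},t^{b'},u^{c'}$, i.e.\ exactly that kernel, whereas $\Delta(a',b',c')$ is the quotient of $G$ by $H\langle z\rangle$. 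Your own test case refutes the claim: for the toric groups one has $(a',b',c')=(1,n,m)$ with $\gcd(n,m)=1$, so $\Delta(a',b',c')$ is the \emph{trivial} group, your spherical criterion $\tfrac1{a'}+\tfrac1{b'}+\tfrac1{c'}>1$ holds vacuously, and your argument would conclude that every $W(k,n,m)$ is finite --- while most are infinite, and Theorem~\ref{thm_ses} (equivalently Theorem~\ref{thm_alt}) exhibits $W(k,n,m)$ as a central extension of $\Delta(k,n,m)=W_{k,n,m}^+$, i.e.\ the relevant triangle group carries the \emph{top} parameters, not the bottom ones. Consequently your finiteness reduction produces the wrong list of candidate triples, and the ensuing case analysis would be carried out over the wrong set.

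Second, even after correcting the quotient, the step you defer as ``the main obstacle'' is in fact the substantive content of the theorem, not bookkeeping. Finiteness of the kernel $K\subseteq\Delta(a,b,c)$ does not bound $H$ without controlling $H\cap\langle z\rangle$, i.e.\ the exact order of the central element; the paper's Remark~\ref{center_finite} records precisely this as an open issue in the infinite cases (whether $stu$ has finite order in $G$ is unknown in general). Nor can faithfulness of $\rho$ be taken for granted or checked routinely: Example~\ref{ex_unfaith} shows $\rho$ is genuinely unfaithful already for $W(6,2,3)$, and its isomorphism class even depends on the choice of the parameters $q,r$, so identifying a finite $J$-group with $\rho(J)\subseteq\mathrm{GL}_2(\mathbb{C})$ requires exactly the determination of the center that you postpone. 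In short, your outline inverts the difficulty: the reduction to triangle groups is easy (and you got its direction wrong), while the computation of the order of $z$ in each candidate case --- which your plan assumes --- is where Achar and Aubert's proof actually lives.
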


In the case where a $J$-group $H:=J\begin{pmatrix} a & b & c \\ a' & b' & c' \end{pmatrix}$ is finite, it turns out that both $H$ and $G:=J\begin{pmatrix} a & b & c \\ ~ & ~ & ~ \end{pmatrix}$ are finite (see~\cite{AA}). In this case, the generators $s, t$ and $u$ are reflections, and since a power of a reflection is either the identity or a reflection and the set of reflections is stable by conjugation, the subgroup $H$ is generated by reflections, \emph{i.e.}, it is a reflection subgroup of $G$. In the case where $G$ is infinite, it is shown in~\cite{AA} that one can still define a representation $$\rho : G\longrightarrow \mathrm{GL}_2(\mathbb{C})$$ of $G$ (and hence of $H$) where the generators of $G$ act by reflections. But as we shall see in Section~\ref{rep} below, this representation is unfaithful in general when $G$ is infinite. The elements $s,t$ and $u$ have order $a,b$ and $c$ in $G$ (see the proof of Lemma~\ref{lem:conj_class} below). 

\begin{definition}\label{defn:reflection}
The \defn{reflections} in $H=J\begin{pmatrix} a & b & c \\ a' & b' & c' \end{pmatrix}\trianglelefteq J\begin{pmatrix} a & b & c \\ ~ & ~ & ~ \end{pmatrix}=G$ are those elements of $H$ which are $G$-conjugates of a power of $s^{a'},t^{b'}$, or $u^{c'}$ which is not the identity. We denote by $R(H)$ the set of reflections in $H$.  
\end{definition} 

\begin{definition}
Let $G, G'$ be two $J$-groups with respective sets of reflections $R, R'$. Let $\varphi: G\longrightarrow G'$ be a group homomorphism. We say that $\varphi$ is a \defn{reflection homomorphism}\footnote{Another possible definition of \defn{reflection homomorphism} could be to require that $\varphi(R)\subseteq R'\cup\{1\}$, which might be more suitable to study quotients as done for instance in~\cite{maxwell} in the case of finite Coxeter groups and their standard generators. In this paper we shall only consider reflection isomorphisms, which are obviously the same in both categories.} if $\varphi(R)\subseteq R'$. We say that $G$ and $G'$ are \defn{reflection isomorphic} if there is a reflection homomorphism $\varphi:G\longrightarrow G'$ which is a group isomorphism and such that 
$\varphi(R)=R'$. 
\end{definition}

\begin{rmq}\label{rmq_perm} Note that permuting the columns of a $J$-group $G$ yields another $J$-group $G'$ which is reflection isomorphic to $G$: a reflection isomorphism between $J\begin{pmatrix} a & b & c\\ a'& b'& c'\end{pmatrix}$ and $J\begin{pmatrix} b & a & c\\ b'& a'& c'\end{pmatrix}$ is obtained by mapping $s$ to $t^{-1}$, $t$ to $s^{-1}$, and $u$ to $u^{-1}$, while a reflection isomorphism between $J\begin{pmatrix} a & b & c\\ a'& b'& c'\end{pmatrix}$ and $J\begin{pmatrix} c & a & b\\ c'& a'& b'\end{pmatrix}$ is obtained by mapping $s$ to $t$, $t$ to $u$, and $u$ to $s$. These two reflection isomorphisms are enough to generate every permutation of the columns. 
\end{rmq}

\begin{lemma}\label{lem:conj_class}
No two elements in $\{s, s^2, \dots, s^{a-1}, t, t^2, \dots, t^{b-1}, u, u^2, \dots, u^{c-1}\}$ are conjugate to each other in $G=J\begin{pmatrix} a & b & c\\ ~& ~& ~\end{pmatrix}$. In particular, there are $a+b+c-3$ conjugacy classes of reflections in $G$.  
\end{lemma}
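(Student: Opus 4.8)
The plan is to understand the structure of $G=J\begin{pmatrix} a & b & c\\ ~& ~& ~\end{pmatrix}$ well enough to produce an invariant that separates the claimed conjugacy classes. The key observation is that the relation $stu=tus=ust$ forces the element $z:=stu$ to be central in $G$: indeed $z=stu=tus$ means $z$ commutes with $s$ (conjugating $z$ by $s$ gives $s(stu)s^{-1}=s(tus)s^{-1}=stu=z$ after using $stu=tus$), and similarly $z=stu=ust$ shows $z$ commutes with $u$, and one checks $z$ commutes with $t$ as well. Once $z$ is central, I would pass to the quotient $\bar G:=G/\langle z\rangle$ and look for a homomorphism to an abelian group, or directly to a permutation/Coxeter-type group, in which the images of $s$, $t$, $u$ retain enough information to distinguish their nonidentity powers.

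First I would record the abelianization. Abelianizing, the relations $stu=tus=ust$ become trivial, so $G^{\mathrm{ab}}$ is the abelian group $\langle s,t,u\mid s^a=t^b=u^c=1,\ \text{commuting}\rangle\cong \BZ/a\times\BZ/b\times\BZ/c$. This already shows that the order of $s$ in $G$ is exactly $a$ (it surjects onto an element of order $a$ in the abelianization), and likewise $t,u$ have orders $b,c$; this justifies the parenthetical claim about orders. Moreover, in $G^{\mathrm{ab}}$ the image of $s^i$ is $(i \bmod a,0,0)$, the image of $t^j$ is $(0,j\bmod b,0)$, and the image of $u^\ell$ is $(0,0,\ell\bmod c)$. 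Since conjugate elements have equal image in the abelianization, no $s^i$ ($1\le i\le a-1$) can be conjugate to any $t^j$ or $u^\ell$, nor to a different power $s^{i'}$, and the same separation holds across all three families: the abelianization invariant alone distinguishes all $a+b+c-3$ listed elements from one another.

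The potential worry is that the listed powers might coincide with the identity in $G$, which would collapse some of the count; but the abelianization computation rules this out precisely because each $s^i$ with $1\le i\le a-1$ maps to a nonzero element of $\BZ/a$, and similarly for the other two. Thus the $a+b+c-3$ elements are genuinely nontrivial and pairwise non-conjugate, giving at least $a+b+c-3$ conjugacy classes of reflections. For the ``in particular'' clause I would note that every reflection is by Definition~\ref{defn:reflection} (with $a'=b'=c'=1$) a $G$-conjugate of some nonidentity power of $s$, $t$, or $u$, so these $a+b+c-3$ classes are all the reflection classes, and they are distinct by what precedes.

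The main obstacle is really just verifying centrality of $z=stu$ cleanly from the two equalities $stu=tus=ust$; everything after that is the soft abelianization argument. I expect the centrality check to be the only place requiring care, since one must use each of the two relations exactly once to commute $z$ past $s$ and past $u$ (commuting past $t$ then following from $stu=stu$ rewritten, or from $z$ commuting with the other two generators together with the fact that $s,t,u$ generate $G$). Since the separation is achieved entirely in the abelianization, I would not even need the centrality of $z$ for the statement as phrased — the abelianization argument is self-contained — but it is worth isolating because it is the structural fact underlying the rest of the paper.
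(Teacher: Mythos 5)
Your proof is correct and is essentially the paper's own argument: both separate the listed elements by their images in abelian quotients of $G$, the paper using the three quotients $G/J\begin{pmatrix} a & b & c\\ ~& b& c\end{pmatrix}\cong\mathbb{Z}/b\mathbb{Z}\times\mathbb{Z}/c\mathbb{Z}$ (and its two analogues) while you use the full abelianization $\mathbb{Z}/a\mathbb{Z}\times\mathbb{Z}/b\mathbb{Z}\times\mathbb{Z}/c\mathbb{Z}$ in a single stroke, which is a harmless streamlining since each of the paper's quotients factors through $G^{\mathrm{ab}}$. The orders of $s,t,u$ are extracted the same way in both arguments, and your aside on the centrality of $stu$ is correct but, as you note yourself, not needed for this lemma.
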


\begin{proof}
It suffices to note that the quotient of $J\begin{pmatrix} a & b & c\\ ~& ~& ~\end{pmatrix}$ by $J\begin{pmatrix} a & b & c\\ ~& b& c\end{pmatrix}$ is isomorphic to the abelian group $\mathbb{Z}/b\mathbb{Z}\times \mathbb{Z}/c\mathbb{Z}$, with $t$ having image $(\overline{1},\overline{0})$ and $u$ having image $(\overline{0},\overline{1})$. This shows that no two elements in $\{t, t^2, \dots, t^{b-1}, u, u^2, \dots, u^{c-1}\}$ are conjugate in $G$, as their images in the abelian quotient would have to be equal. Arguing similarly with $J\begin{pmatrix} a & b & c\\ a& b& ~\end{pmatrix}$ and $J\begin{pmatrix} a & b & c\\ a& ~& c\end{pmatrix}$ we get the claim. Note that this also shows that $s, t$ and $u$ have order $a, b$ and $c$ in $G$. 
\end{proof}

\begin{lemma}
With the notation of Definition~\ref{defn:reflection}, we have $R(H)=R(G)\cap H$. 
\end{lemma}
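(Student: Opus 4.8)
The plan is to prove the two inclusions separately; the inclusion $R(H)\subseteq R(G)\cap H$ is essentially immediate from the definitions, while the reverse inclusion is where the real content lies and will require identifying the quotient $G/H$. For the easy direction, recall that $R(H)$ consists of the $G$-conjugates of non-identity powers of $s^{a'}$, $t^{b'}$, $u^{c'}$. Since $H$ is by definition the normal closure of $\{s^{a'},t^{b'},u^{c'}\}$, it is a normal subgroup containing these three elements, hence all their powers and all their $G$-conjugates; this gives $R(H)\subseteq H$. Moreover a non-identity power of $s^{a'}$ is in particular a non-identity power of $s$ (and similarly for the other two), so any $G$-conjugate of such an element is a reflection of $G$. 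Thus $R(H)\subseteq R(G)\cap H$.

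For the reverse inclusion, I would take $r\in R(G)\cap H$ and write $r=g\,s^i g^{-1}$ for some $g\in G$ and some $i$ with $s^i\neq 1$ (the cases of $t$ and $u$ being symmetric, or alternatively reduced to this one via the column-permuting reflection isomorphisms of Remark~\ref{rmq_perm}). The key first move is to exploit normality: since $H\trianglelefteq G$ and $r\in H$, we get $s^i=g^{-1} r\, g\in H$. The next step is to identify $G/H$ explicitly. Because $H$ is the normal closure of $\{s^{a'},t^{b'},u^{c'}\}$ and $a'\mid a$, $b'\mid b$, $c'\mid c$, the relations $s^a=t^b=u^c=1$ become redundant in the quotient, so that $G/H$ admits the presentation $\langle\, s,t,u\mid s^{a'}=t^{b'}=u^{c'}=1,\ stu=tus=ust\,\rangle$, i.e. $G/H\cong J\begin{pmatrix} a' & b' & c' \\ ~ & ~ & ~ \end{pmatrix}$, the parent $J$-group on the coprime parameters $a',b',c'$.

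Now, since $a',b',c'$ are pairwise coprime, Lemma~\ref{lem:conj_class} applies to this parent $J$-group and shows that the image of $s$ has order exactly $a'$ in $G/H$. The condition $s^i\in H$ says precisely that this image is trivial, forcing $a'\mid i$; writing $i=a'j$ gives $s^i=(s^{a'})^j$, a non-identity power of $s^{a'}$, whence $r=g\,(s^{a'})^j g^{-1}\in R(H)$ by definition. This closes the inclusion and hence the proof.

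The main obstacle is the determination of the order of the image of $s$ in $G/H$: without it, knowing $s^i\in H$ would not tell us that $i$ is a multiple of $a'$, and the conjugate $r$ might fail to be visibly a reflection of $H$. The decisive point is therefore the recognition of $G/H$ as the coprime parent $J$-group, after which Lemma~\ref{lem:conj_class} supplies exactly the order statement needed; the remaining arguments are routine bookkeeping with the definition of reflections and the normality of $H$.
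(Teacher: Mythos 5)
Your proof is correct and follows essentially the same route as the paper: both directions hinge on normality of $H$ giving $s^i\in H$, the identification $G/H\cong J\begin{pmatrix} a' & b' & c' \\ ~ & ~ & ~ \end{pmatrix}$, and the fact that the image of $s$ in this quotient has order exactly $a'$. The paper merely phrases the divisibility step as a contradiction (assuming $a'\nmid n$ produces some $s^{a''}\in H$ with $1\leq a''<a'$), whereas you argue directly that $s^i\in H$ forces $a'\mid i$ — an inessential difference.
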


\begin{proof}
It is clear that $R(H)\subseteq R(G)\cap H$. Conversely, let $wr^n w^{-1}\in R(G)\cap H$, with $r\in \{s,t,u\}$ and $n$ not divisible by $o(r):=\mathrm{order}(r)$. As $H\trianglelefteq G$ we have $r^n\in H$. Assume without loss of generality that $r=s$. It remains to show that $a'$ divides $n$. If not, then there is $1\leq a''< a'$ such that $s^{a''}\in H$. But $$G / H \cong J\begin{pmatrix}a' & b' & c'\\ ~& ~& ~\end{pmatrix},$$ and since the image of $s$ in this quotient has order $a'$, one cannot have $s^{a''}\in H$, a contradiction. Hence $a'$ divides $n$, and $wr^n w^{-1}\in R(H)$. 
\end{proof}

\begin{definition}
Let $H$ be a $J$-group. Define $\H(H)$ to be the quotient set of $R(H)$ by the transitive closure of the relation $r^n\sim r^m$ for all $r\in R(H)$ and $1\leq n,m< o(r)$. An element $\H$ of $\H(H)$ will be called a \defn{reflecting hyperplane} (attached to any reflection $r\in R(H)$ such that the class $[r]$ of $r$ modulo~$\sim$ is equal to $\H$). Note that the action of $H$ on $R(H)$ by conjugation induces an action of $H$ on $\H(H)$---more generally, if $H$ has parent $J$-group $G$, then since $H$ is normal in $G$ we have an action of $G$ on $\H(H)$.  
\end{definition}

\begin{lemma}\label{cor:j_groups_char}
If a $J$-group $H\neq 1$ has a single $H$-conjugacy class of reflecting hyperplanes, then up to permutation of the columns $H$ is of the form $J\begin{pmatrix} a & b & c\\ a'& b& c\end{pmatrix}$, where $a'\neq a$, $b < c$. 
\end{lemma}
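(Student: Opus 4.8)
The plan is to translate the hypothesis into a statement about orbits on the set $\H(H)$ of reflecting hyperplanes, and to count these orbits in terms of the columns of the parameter array. Write $G=J\begin{pmatrix} a & b & c\\ ~&~&~\end{pmatrix}$ for the parent $J$-group, and call a column \emph{active} if its primed entry is strictly smaller than its unprimed entry; thus the $s$-column is active iff $a'\neq a$, equivalently iff $s^{a'}\neq 1$, since $a'\mid a$ and $s$ has order $a$ by Lemma~\ref{lem:conj_class}. I would first record that $H\neq 1$ is equivalent to having at least one active column: if all three columns are inactive then $H$ is the normal closure of $\{1\}$ and hence trivial, while conversely $H=1$ forces $s^{a'}=t^{b'}=u^{c'}=1$, i.e. $a'=a$, $b'=b$, $c'=c$. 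In particular $R(H)\neq\emptyset$ and $\H(H)\neq\emptyset$.

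Next I would set up a bijection between the active columns and the $G$-orbits on $\H(H)$. By Definition~\ref{defn:reflection} every reflection of $H$ is a $G$-conjugate of a non-trivial power of $s^{a'}$, $t^{b'}$ or $u^{c'}$; since conjugation sends powers to powers and a non-trivial power of a reflection determines the same reflecting hyperplane, the hyperplane of such a reflection lies in the single $G$-orbit of $[s^{a'}]$, of $[t^{b'}]$, or of $[u^{c'}]$, respectively. Hence $\H(H)$ is the union of one $G$-orbit for each active column. To see these orbits are pairwise distinct I would invoke Lemma~\ref{lem:conj_class}: if, say, the orbits of $[s^{a'}]$ and $[t^{b'}]$ coincided, then $s^{a'}$ would be $G$-conjugate to a power $t^{b'j}$; activeness of the $s$-column makes $s^{a'}$ a non-trivial power of $s$, forcing $t^{b'j}$ to be a non-trivial power of $t$, which contradicts the fact that no non-trivial power of $s$ is conjugate to a non-trivial power of $t$. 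This gives the desired bijection.

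Finally I would use normality to pass from $H$-orbits to $G$-orbits. Since $H\trianglelefteq G$ acts on $\H(H)$ through the restriction of the $G$-action, every $H$-orbit is contained in a $G$-orbit, so the partition of $\H(H)$ into $H$-orbits refines the partition into $G$-orbits. Consequently a single $H$-conjugacy class of reflecting hyperplanes forces a single $G$-orbit, and by the bijection above (together with $\H(H)\neq\emptyset$) this means exactly one active column. Up to permuting the columns, which by Remark~\ref{rmq_perm} produces a reflection-isomorphic $J$-group, I may assume the active column is the first, so that $b'=b$, $c'=c$ and $a'\neq a$, i.e. $H\cong_{\mathrm{ref}}J\begin{pmatrix} a & b & c\\ a'& b& c\end{pmatrix}$. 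Since $b=b'$ and $c=c'$ are coprime and both $\geq 2$, they are distinct, and a further transposition of the last two columns arranges $b<c$.

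I expect the only genuinely non-formal step to be the distinctness of the three $G$-orbits, which is precisely where Lemma~\ref{lem:conj_class} is needed; the implication "single $H$-orbit $\Rightarrow$ single $G$-orbit" is the soft, order-theoretic heart of the argument, and the remainder is bookkeeping with the column-permutation reflection isomorphisms of Remark~\ref{rmq_perm}.
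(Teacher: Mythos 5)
Your proposal is correct and takes essentially the same approach as the paper: both arguments rest on Lemma~\ref{lem:conj_class} to show that two ``active'' columns would yield reflecting hyperplanes whose reflections are $G$-conjugate to non-trivial powers of different generators, hence lie in distinct conjugacy classes, and both finish with the column permutations of Remark~\ref{rmq_perm}. Your packaging of this as a bijection between active columns and $G$-orbits on $\H(H)$, with $H$-orbits refining $G$-orbits, is merely an orbit-counting rephrasing of the paper's contrapositive (the paper proves distinctness of the $H$-classes ``in $G$, and a fortiori in $H$'', which is exactly your refinement step).
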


\begin{proof}
Given a $J$-group $H=J\begin{pmatrix} a & b & c\\ a'& b'& c'\end{pmatrix}$, if $i'<i$ for more than one integer $i\in\{a,b,c\}$, say $a$ and $b$, then both $1\neq s^{a'}$ and $1\neq t^{b'}$ lie in $H\trianglelefteq G=J\begin{pmatrix} a & b & c\\ ~& ~& ~\end{pmatrix}$. For $r\in R(H)$, by Lemma~\ref{lem:conj_class}, the reflection $r$ is conjugate (in $G$) to exactly one non-trivial power of a generator in $\{s,t,u\}$ and hence, all reflections $r^m$ for $1\leq m <o(r)$ are conjugate in $G$ to a power of the same generator. Now for any $r\in R(H)$ such that $[r]$ lies in the $H$-conjugacy class of $[s^{a'}]$, the reflection $r$ is conjugate (in $G$) to a power of $s$. Similarly, for any $r\in R(H)$ such that $[r]$ lies in the $H$-conjugacy class of $[t^{b'}]$, the reflection $r$ is conjugate (in $G$) to a power of $t$. It follows that the $H$-conjugacy classes of $[t^{b'}]$ and $[s^{a'}]$ are distinct in $G$ (and a fortiori in $H$), since if $[r]$ was a hyperplane in both classes, then $r$ would be conjugate (in $G$) to both a power of $s$ and a power of $t$, which by Lemma~\ref{lem:conj_class} is excluded.

Hence, we have that $H$ is of the form $J\begin{pmatrix} a & b & c\\ a'& b& c\end{pmatrix}$ where $a'<a$. Note that $b\neq c$ as $b'\neq c'$ by definition. Using also Remark~\ref{rmq_perm} this concludes the proof.
\end{proof}

Note that it is not clear a priori that every $J$-group of the form $H=J\begin{pmatrix} a & b & c\\ a'& b& c\end{pmatrix}$ as above has a single conjugacy class of reflecting hyperplanes: by definition, the generators of $H$ are all conjugate \emph{in $G$} to a power of $s$, but it is not clear that they are in fact conjugate in $H$ to a power of $s$. We shall show in Corollary~\ref{coro_single_class} below that this holds at least for $a'=1$, using Theorem~\ref{part_I_main} and the following result: 

\begin{prop}\label{h_conjugacy}
Let $H=J\begin{pmatrix} a & b & c\\ a'& b& c\end{pmatrix}$, where $a'\neq a$. \begin{enumerate}
\item The reflections $x_i:=t^{i-1} s^{a'} t^{-i+1}$, where $i=1,2, \dots, a$, generate $H$ (as a group),
\item Every reflection in $H$ is a conjugate in $H$ of a non-trivial power of one of the $x_i$'s.  
\end{enumerate}
\end{prop}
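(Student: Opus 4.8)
The plan is to work inside the parent group $G=J\begin{pmatrix} a & b & c \\ ~ & ~ & ~ \end{pmatrix}$, in which $H$ is by definition the normal closure of $s^{a'},t^{b},u^{c}$; since $t^{b}=u^{c}=1$ this is simply the normal closure of $s^{a'}$. The first thing I would record is that $z:=stu$ is central in $G$: the relations $stu=tus=ust$ say exactly that $z$ commutes with each of $s,t,u$ (e.g. $sz=s\cdot stu=s\cdot tus=stus=stu\cdot s=zs$, using $stu=tus$). Rewriting these as $st=zu^{-1}$, $tu=zs^{-1}$, $us=zt^{-1}$ is the main computational tool. In particular $tu=zs^{-1}$ commutes with $s^{a'}$, which yields the clean identity $u\,s^{a'}u^{-1}=t^{-1}s^{a'}t$: the $u$-conjugate of the generator is already a $t$-conjugate.

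For part (1) I would set $K:=\langle x_i\rangle$. Each $x_i=t^{i-1}s^{a'}t^{-(i-1)}$ is a $G$-conjugate of $s^{a'}\in H$, and $H\trianglelefteq G$, so $K\subseteq H$; conversely, since $s^{a'}=x_1\in K$ and $H$ is the smallest normal subgroup of $G$ containing $s^{a'}$, it suffices to prove $K\trianglelefteq G$. Centrality of $z$ does the bookkeeping: writing $u=t^{-1}s^{-1}z$ with $z$ central, conjugation by $u$ equals conjugation by $(st)^{-1}=t^{-1}s^{-1}$, so normality of $K$ reduces to invariance under $s$ and $t$ alone. Invariance under $t$ is clear, since $t x_i t^{-1}=x_{i+1}$ permutes the $x_i$ cyclically, the cycle closing up via $t^{b}=1$; thus the family $\{x_i\}$ is exactly the set of $t$-conjugates of $s^{a'}$. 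The one remaining point is invariance under $s$, and this is where the hypothesis enters: when $a'=1$ one has $s=s^{a'}=x_1\in K$, so conjugation by $s$ automatically preserves $K$, giving $K\trianglelefteq G$ and hence $K=H$.

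For part (2) I would argue through centralizers and the quotient $G/H\cong J\begin{pmatrix} a' & b & c \\ ~ & ~ & ~ \end{pmatrix}$. By Definition~\ref{defn:reflection} every reflection of $H$ is a $G$-conjugate $w(s^{a'})^{j}w^{-1}$ of a non-trivial power of $s^{a'}$, and the $H$-conjugacy class of such an element depends only on the double coset $Hw\,C_G((s^{a'})^{j})$; since $t^{i-1}(s^{a'})^{j}t^{-(i-1)}=x_i^{\,j}$, it is enough to show every such double coset meets $\langle t\rangle$, i.e. that $G=H\,\langle t\rangle\,C_G(s^{a'})$. Now $C_G(s^{a'})$ contains $s$, $z$ and $tu=zs^{-1}$, and I would compute its image in $G/H$. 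For $a'=1$ this collapses completely: then $s\in H$, so the image of $C_G(s^{a'})$ contains the image of $tu$, which generates $G/H\cong\mathbb{Z}/b\times\mathbb{Z}/c\cong\mathbb{Z}/bc$ because $\gcd(b,c)=1$; hence $G=H\,C_G(s^{a'})$, a single double coset, and every reflection is $H$-conjugate to the corresponding power of $x_1$ (so in fact a single $t$-orbit suffices).

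The step I expect to be the true obstacle is the $s$-invariance of $K$ in part (1)—equivalently, the assertion that the $t$-conjugates of $s^{a'}$ already generate the whole normal closure. All the identities produced by $z$ being central are essentially tautological, so the genuine content must come from the orders $s^{a}=t^{b}=u^{c}=1$ together with the coprimality $\gcd(b,c)=1$ built into the $J$-group parameters; this is precisely what makes the case $a'=1$ transparent, since there $s\in K$ and the centralizer computation in part (2) degenerates to one double coset. Carefully extracting this coprimality input, rather than the formal manipulations with $z$, is where I expect the work of the proof to lie.
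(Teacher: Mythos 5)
Your proof is complete only in the case $a'=1$: for part (1) you establish $s$-invariance of $K=\langle x_i\rangle$ only via $s=s^{a'}=x_1\in K$, and in part (2) your double-coset computation likewise collapses only when $s\in H$. Since the proposition is stated for arbitrary $a'\neq a$, this is, strictly speaking, a gap---and you correctly flag it as the crux. It is worth saying plainly, however, that the paper's own proof makes exactly the same reduction without acknowledging it: the key step there reads ``$s^{\pm 1} x_i s^{\mp 1}=x_1^{\pm 1} x_i x_1^{\mp 1}$ as $s=x_1$'', and $s=x_1=s^{a'}$ holds only for $a'=1$ (the subsequent computation of $ux_iu^{-1}$ uses it again). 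So your argument matches the case the paper actually proves and uses---the surrounding text and Corollary~\ref{coro_single_class} restrict to $a'=1$---and for $a'\geq 2$ neither your text nor the paper's supplies the missing $s$-invariance; your closing diagnosis that genuine input beyond centrality of $stu$ would be needed there is accurate. One further point in your favour: since the $x_i$ form the $t$-orbit of $s^{a'}$ and $t$ has order $b$, the natural index range is modulo $b$, not $i=1,\dots,a$ as in the statement (an apparent typo, given that Theorem~\ref{part_I_main} uses $n=b$ generators); your phrasing ``the set of $t$-conjugates of $s^{a'}$'' is the correct reading.

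Within the case $a'=1$, your part (1) coincides with the paper's argument: stability of the $x_i$'s under conjugation by $s$ and $t$, with conjugation by $u$ reduced to conjugation by $(st)^{-1}$ via centrality of $stu$; the paper packages the same mechanism as an induction on word length, rewriting $g x_i g^{-1}$ as $h x_j h^{-1}$ with $h\in H'$. Your part (2), however, takes a genuinely different and slicker route: from $tu=(stu)s^{-1}\in C_G(s)$ and the fact that the image of $tu$ generates $G/H\cong \mathbb{Z}/b\mathbb{Z}\times\mathbb{Z}/c\mathbb{Z}$ (cyclic since $\gcd(b,c)=1$), you deduce $G=H\,C_G(s)$, hence every reflection $w s^j w^{-1}$ is already $H$-conjugate to $s^j=x_1^j$. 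This is stronger than the stated part (2), and it yields the second point of Corollary~\ref{coro_single_class} (a single $H$-class of reflecting hyperplanes) directly, without invoking Theorem~\ref{part_I_main} to show that the $x_i$ are conjugate in $H$. The paper instead extracts part (2) from the same rewriting process as part (1), which is uniform over all the $x_i$ but requires carrying the word-length induction; your centralizer argument buys a shorter proof and a sharper conclusion, at the price of being (so far) confined, like everything else here, to $a'=1$.
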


\begin{proof}
Since all the $x_i$'s lie in $H$ and $H\trianglelefteq G=J\begin{pmatrix} a & b & c\\ ~& ~& ~\end{pmatrix}$, their conjugates by $s, t$ and $u$ again lie in $H$. We begin by showing that an element of the form $g x_i g^{-1}$ where $g\in G$ can be rewritten in the form $h x_j h^{-1}$ for some $j$ and some element $h$ lying in the subgroup $H'\subseteq H$ generated by the $x_i$'s. Note that by induction on the length of a word for $g$ in $\{s^{\pm 1}, t^{\pm 1}, u^{\pm 1}\}$, it is enough to show it for $g\in\{s^{\pm 1},t^{\pm 1},u^{\pm 1}\}$. For $g=s^{\pm 1}$ we have $s^{\pm 1} x_i s^{\mp 1}=x_1^{\pm 1} x_i x_1^{\mp 1}$ as $s=x_1$. We have $t^{\pm 1} x_i t^{\mp{1}}=x_{i\pm{1}}$ for all $i=0, \dots, a-1$ (with the convention that $x_{a+1}=x_1$ and $x_0=x_a$). Using that $stu$ is central in $G$, we have $$u x_i u^{-1}= t^{-1} s^{-1} (s t u) x_i (u^{-1} t^{-1} s^{-1}) st =t^{-1} s^{-1} x_i s t= t^{-1} x_1^{-1} x_i x_1 t=x_{a}^{-1} x_{i-1} x_a,$$ and a similar computation shows that $u^{-1} x_i u=x_1 x_{i+1} x_1^{-1}$. 

Now $H$ is generated by its reflections, that is, by the conjugates in $G$ of the non-trivial powers of $s^{a'}=x_1$. But by the observation above, every element of the form $g x_1^\ell g^{-1}$ can be rewritten in the form $h x_i^{\ell} h^{-1}$ for some $i$ and some $h$ in $H'$. It follows that the $x_i$'s generate $H$, hence that $H'=H$. The second point also follows.  
\end{proof}

\subsection{Toric reflection groups are $J$-groups}\label{tarej}

Let $k,n,m\geq 2$ with $n<m$ and $n,m$ coprime. Recall from the introduction that the \defn{toric reflection group} $W(k,n,m)$ is defined by the presentation \begin{align}\label{pres_wknm_2} W(k,n,m):=\bigg\langle \ x_1, x_2, \dots, x_n \ \bigg\vert\ 
\begin{matrix} x_i^k=1~\text{for}~i=1, \dots, n,\\ \underbrace{x_1 x_2 \cdots}_{m~\text{factors}} = \underbrace{x_2 x_3\cdots}_{m~\text{factors}} = \dots = \underbrace{x_n x_1 \cdots}_{m~\text{factors}} \end{matrix} 
\ \bigg\rangle.\end{align}
In particular, the group $W(k,n,m)$ is a quotient of the torus knot group $G(n,m)$ as the above presentation is obtained from the classical presentation~\eqref{pres_2} of $G(n,m)$ by adding the relations $x_i^k=1$. 

\begin{definition}
The set of \defn{reflections} in $W(k,n,m)$, denoted $R$, is the set of those elements in $W(k,n,m)$ which are conjugate to a non-trivial power of one of the $x_i$'s, that is, $$R=\{ g x_i^{\ell} g^{-1}~|~i\in\{1, 2, \dots, n\},~g\in W(k,n,m), ~\ell\in\mathbb{Z}~\text{with}~x_i^{\ell}\neq 1\}.$$ We shall see later that all the $x_i$'s have order $k$. Given two toric reflection groups $W, W'$ with respective sets of reflections $R, R'$, we say that a group isomorphism $\varphi : W\longrightarrow W'$ is a \defn{reflection isomorphism} if $\varphi(R)=R'$, and write $W\cong_{\mathrm{ref}} W'$. 
\end{definition}

\begin{rmq}
Observe that Presentation~\eqref{pres_wknm_2} could be given as well for $m<n$. We explain why we will assume that $n<m$ in most statements. In fact, one has $G(n,m)\cong G(m,n)$ since the torus knots $T_{n,m}$ and $T_{m,n}$ are isotopic (see for instance~\cite[Proposition 3.37]{Knots}). By writing down explicitly an isomorphism between $G(n,m)$ and $G(m,n)$, it is not hard to see that it induces a reflection isomorphism $W(k,n,m)\cong W(k,m,n)$. We shall see in this section another explanation for this isomorphism, by realizing toric reflection groups as $J$-groups. More precisely, this will be a consequence of the first point of Corollary~\ref{coro_single_class} below together with the reflection isomorphism $$J\begin{pmatrix} k & n & m\\ ~& n& m\end{pmatrix}\cong_{\mathrm{ref}} J\begin{pmatrix} k & m & n\\ ~& m& n\end{pmatrix}.$$ from Remark~\ref{rmq_perm}.   
\end{rmq}

\begin{theorem}[Toric reflection groups are $J$-groups]\label{part_I_main}
Let $k, n, m\geq 0$ with $n,m$ coprime (we do not necessarily assume $n<m$ here), and let $H=J\begin{pmatrix} k & n & m\\ ~& n& m\end{pmatrix}\trianglelefteq J\begin{pmatrix} k & n & m\\ ~& ~& ~\end{pmatrix}=G$. Then $H$ has a presentation with generators $x_1, x_2, \dots, x_n$ and relations (indices are taken modulo $n$) \begin{align*} &x_i^k=1,~\forall i=1, \dots, n,\\
&x_1 x_2 \cdots x_m = x_i x_{i+1} \cdots x_{i+m-1}, ~\forall i=2, \dots, n.  
\end{align*}
If $n<m$ we therefore have $W(k,n,m)\cong H$. In terms of the generators of $G$ we have $x_i=t^{i-1} s t^{-i+1}$ for all $i=1, \dots, n$. 
\end{theorem}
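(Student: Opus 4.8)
The plan is to prove the statement via the Reidemeister--Schreier algorithm applied to the pair $H\trianglelefteq G$, after first pinning down the correct generators and verifying that the claimed relations hold. First I observe that, since $a'=1$ and since $t^n=u^m=1$ already hold in $G$ (as $t,u$ have orders $n,m$), the subgroup $H=J\begin{pmatrix} k & n & m\\ & n & m\end{pmatrix}$ is simply the normal closure of $s$ in $G$. As computed in the proof of Lemma~\ref{lem:conj_class}, the quotient $G/H$ is isomorphic to $\BZ/n\BZ\times\BZ/m\BZ$, which is cyclic of order $nm$ because $\gcd(n,m)=1$; in particular $[G:H]=nm$, and in $G/H$ we have $\overline{s}=1$ with $\overline{t}$, $\overline{u}$ commuting of orders $n$ and $m$. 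The strategy is then: produce an easy surjection $W(k,n,m)\twoheadrightarrow H$, and use Reidemeister--Schreier to show it has no kernel.

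For the easy direction, set $x_i:=t^{i-1}s t^{-(i-1)}\in H$ for $i\in\BZ$, so that $x_{i+n}=x_i$ since $t^n=1$. By Proposition~\ref{h_conjugacy}(1) the elements $x_1,\dots,x_n$ generate $H$. Clearly $x_i^k=t^{i-1}s^k t^{-(i-1)}=1$. For the second family of relations, a telescoping computation gives $x_i x_{i+1}\cdots x_{i+m-1}=t^{i-1}(x_1 x_2\cdots x_m)t^{-(i-1)}$, so it suffices to show that $x_1 x_2\cdots x_m$ commutes with $t$. Writing $z:=stu$, which is central in $G$, and using $st=zu\inv$ and $s=zu\inv t\inv$ together with $u^m=1$, one finds
\[
x_1 x_2\cdots x_m=(st)^{m-1}\,s\,t^{-(m-1)}=z^{m}u^{-m}t^{-m}=z^{m}t^{-m},
\]
which indeed commutes with $t$ (both factors do). Hence all relations of Presentation~\eqref{pres_wknm_2} hold among the $x_i$, giving a surjective homomorphism $\pi\colon W(k,n,m)\twoheadrightarrow H$ sending $x_i\mapsto t^{i-1}s t^{-(i-1)}$.

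For injectivity I would apply Reidemeister--Schreier with the Schreier transversal $T=\{\,t^i u^j : 0\le i<n,\ 0\le j<m\,\}$, which is prefix-closed and maps bijectively onto $G/H$. The Schreier generators coming from $s$ are the conjugates $g_{i,j}:=t^i u^j s u^{-j}t^{-i}$, with $g_{\ell,0}=x_{\ell+1}$; the generators coming from $u$ are trivial (the only non-obvious one, at $j=m-1$, being killed by the relator $u^m$), and those coming from $t$ are auxiliary. Rewriting the relator $s^k$ based at each coset yields $g_{i,j}^k=1$; rewriting $t^n$ and $u^m$ eliminates the auxiliary $t$- and $u$-generators; and rewriting the two centrality relators $stu(tus)\inv$ and $stu(ust)\inv$ based at each coset expresses every $g_{i,j}$ with $j>0$ in terms of the $g_{\ell,0}$ and produces the remaining relations. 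A sequence of Tietze transformations then reduces the resulting presentation to exactly Presentation~\eqref{pres_wknm_2}, proving that $\pi$ is an isomorphism.

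The main obstacle is the last step. The routine but delicate part is carrying out the Reidemeister--Schreier bookkeeping for the two centrality relators across all $nm$ cosets and checking, via the conjugation formulas $t^{\pm1}x_\ell t^{\mp1}=x_{\ell\pm1}$ and $u^{\pm1}x_\ell u^{\mp1}=\cdots$ established in the proof of Proposition~\ref{h_conjugacy}, that the generators $g_{i,j}$ with $j>0$ are consistently eliminated and that the surviving relations collapse to precisely the $n-1$ braid-type relations $x_1\cdots x_m=x_i\cdots x_{i+m-1}$, rather than to anything stronger. It is exactly here that one must verify that no spurious relation among the $x_\ell$ is introduced, which is what guarantees that $\ker\pi$ is trivial.
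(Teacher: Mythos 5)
Your strategy is the same as the paper's: Reidemeister--Schreier applied to $H\trianglelefteq G$ with the transversal of monomials in $t,u$ (your ordering $t^iu^j$ versus the paper's $u^it^j$ is immaterial). Your easy direction is correct and in fact pleasantly clean: since $x_\ell=t^{\ell-1}st^{1-\ell}$ makes sense for all $\ell\in\mathbb{Z}$ with period $n$, the telescoping identity $x_ix_{i+1}\cdots x_{i+m-1}=t^{i-1}(x_1x_2\cdots x_m)t^{-(i-1)}$ is valid, and the computation $x_1x_2\cdots x_m=(st)^{m-1}st^{-(m-1)}=z^mu^{-m}t^{-m}=z^mt^{-m}$ (with $z=stu$ central and $u^m=1$) correctly shows this product commutes with $t$; together with Proposition~\ref{h_conjugacy}(1) this gives the surjection $\pi\colon W(k,n,m)\twoheadrightarrow H$. (Compare Lemma~\ref{delta}, where the paper exploits essentially the same commutation for $\delta=x_1x_2\cdots x_m$, though in the quotient rather than in $G$.)

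There is, however, a genuine gap: the injectivity of $\pi$, which is the actual content of the theorem, is exactly the step you defer as ``routine but delicate'' bookkeeping and never carry out --- and it is not routine; it constitutes the bulk of the paper's proof. Concretely, one must (i) obtain closed-form expressions for \emph{all} Schreier generators coming from $s$, and for the surviving auxiliary generators, purely in terms of $x_1,\dots,x_n$: in the paper this is Lemma~\ref{lem:l0} and Proposition~\ref{prop_generators}, proved by a double induction (on $\ell$ and on $p$) yielding formulas such as $s_{m-1-\ell,p-1}=s_0s_1\cdots s_{\ell}s_{p+\ell}s_{\ell}^{-1}\cdots s_0^{-1}$; and (ii) substitute these expressions into every rewritten relator --- the relators $s^k$, $t^n$, $u^m$ at each coset and the two centrality relators at each coset --- and check that the result reduces to precisely $x_i^k=1$ and $x_1\cdots x_m=x_ix_{i+1}\cdots x_{i+m-1}$ and nothing stronger. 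In the paper this is Lemma~\ref{rel_equiv} together with the relation-by-relation verification closing Section~\ref{tarej}, where for instance the rewritten relation $\prod_q u_{q,j}=1$ is shown, after a conjugation, to collapse to $x_1\cdots x_m=x_{j+1}\cdots x_{j+m}$, while the families~\eqref{first}--\eqref{third} are shown to reduce to identities by cancellations of the form $xx^{-1}$ only. Without (i) and (ii) you have a surjection from the group with the claimed presentation onto $H$ but no control over its kernel; the ``spurious relation'' you flag is precisely what remains to be excluded, so the proposal is a correct plan matching the paper's route, but not yet a proof.
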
    

\begin{cor}\label{coro_single_class}
Let $H=J\begin{pmatrix} k & n & m\\ ~& n& m\end{pmatrix}$, with $k,n,m$ as in Theorem~\ref{part_I_main}. 
\begin{enumerate}
\item Let $R$ be the set of reflections in $W(k,n,m)$ and $R'$ be the set of reflections in $H=J\begin{pmatrix} k & n & m\\ ~& n& m\end{pmatrix}$. The isomorphism $\varphi : W(k,n,m)\overset{\cong}{\longrightarrow} H, x_i \mapsto t^{i-1} s t^{-i+1}$ from Theorem~\ref{part_I_main} satisfies $\varphi(R)=R'$. In other words, it is an isomorphism of "reflection groups". 
\item There is a single $H$-conjugacy class of reflecting hyperplanes in $H$. 
\end{enumerate}
\end{cor}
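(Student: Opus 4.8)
The plan is to deduce both parts from Proposition~\ref{h_conjugacy}, after unwinding the definitions in the special case at hand, where the parameters are $a=k$, $b=n$, $c=m$ and $a'=1$, $b'=n$, $c'=m$. The first point I would record is that, since $t$ and $u$ have orders $n$ and $m$ in $G$ (Lemma~\ref{lem:conj_class}), the elements $t^{b'}=t^{n}$ and $u^{c'}=u^{m}$ are trivial. Hence, by Definition~\ref{defn:reflection}, the reflection set $R'=R(H)$ consists precisely of the $G$-conjugates of the non-trivial powers of $s=s^{a'}$; the other two columns contribute nothing.

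For part (1), I would compute $\varphi(R)$ directly. Since $\varphi$ is an isomorphism onto $H$ (Theorem~\ref{part_I_main}) sending $x_i$ to $t^{i-1}st^{-i+1}$, and since $\varphi$ is surjective, the image $\varphi(R)$ is exactly the set of $H$-conjugates of the non-trivial powers of the elements $t^{i-1}st^{-i+1}$. The inclusion $\varphi(R)\subseteq R'$ is immediate: each $t^{i-1}s^{\ell}t^{-i+1}$ is a $G$-conjugate of $s^{\ell}$, so every $H$-conjugate of it is again a $G$-conjugate of $s^{\ell}$ and hence lies in $R(H)$ whenever $k\nmid\ell$. The reverse inclusion $R'\subseteq\varphi(R)$ is precisely the content of Proposition~\ref{h_conjugacy}(2), which states that every reflection of $H$ is an $H$-conjugate of a non-trivial power of one of the $x_i$. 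Together these give $\varphi(R)=R'$.

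For part (2), I would first reduce the statement to conjugacy of the generators. By the definition of $\H(H)$, all non-trivial powers of a fixed $x_i$ determine the same reflecting hyperplane $[x_i]$, and Proposition~\ref{h_conjugacy}(2) shows that the hyperplane of an arbitrary reflection is $H$-conjugate to some $[x_i]$. Thus it suffices to prove that the $[x_i]$ all lie in a single $H$-orbit, for which it is enough that the $x_i$ be pairwise conjugate in $H$. Working in the presentation of $H\cong W(k,n,m)$ furnished by Theorem~\ref{part_I_main}, I would set $\delta_i=x_i x_{i+1}\cdots x_{i+m-1}$ (indices modulo $n$) and use the defining relations $\delta_1=\delta_2=\cdots=\delta_n$. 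Cancelling in $\delta_i=\delta_{i+1}$ yields $x_{i+m}=(x_{i+1}\cdots x_{i+m-1})^{-1}x_i(x_{i+1}\cdots x_{i+m-1})$, so $x_i$ and $x_{i+m}$ are conjugate in $H$ for every $i$. Since $\gcd(n,m)=1$, the shift $i\mapsto i+m$ is an $n$-cycle on $\mathbb{Z}/n\mathbb{Z}$, so all the $x_i$ are conjugate and part (2) follows.

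The one genuine subtlety---and the point I would be most careful about---is that one cannot simply conjugate $x_i$ to $x_{i+1}$ by $t$ to identify the hyperplanes, because $t\notin H$: its image has order $n\geq 2$ in the abelian quotient $G/H\cong\mathbb{Z}/n\mathbb{Z}\times\mathbb{Z}/m\mathbb{Z}$. All the $[x_i]$ are certainly a single \emph{$G$}-conjugacy class (conjugate via $t$), and the real content of part (2) is that this collapses to a single \emph{$H$}-conjugacy class. This is exactly what coprimality of $n$ and $m$ together with the toric relations buys: the conjugacies must be realized inside $H$, and the relation $x_i\sim x_{i+m}$ combined with $\gcd(n,m)=1$ produces enough internal conjugacies to connect all the $x_i$. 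All remaining verifications are routine bookkeeping with the presentation.
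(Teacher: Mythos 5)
Your proof is correct and follows essentially the same route as the paper's: both parts rest on Proposition~\ref{h_conjugacy}(2) (the forward inclusion in part (1) via $G$-conjugacy to powers of $s$, and part (2) reduced to showing the $x_i$ are pairwise conjugate in $H$ using the defining relations together with $\gcd(n,m)=1$). The only difference is that you make explicit the cancellation $x_{i+m}=(x_{i+1}\cdots x_{i+m-1})^{-1}x_i(x_{i+1}\cdots x_{i+m-1})$ and the $n$-cycle structure of $i\mapsto i+m$ on $\mathbb{Z}/n\mathbb{Z}$, which the paper leaves implicit as ``a consequence of the relations.''
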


\begin{proof}
All the elements $t^{i-1} s t^{-i+1}$ are reflections in $H$, hence we have $\varphi(R)\subseteq R'$. Conversely, let $r$ be a reflection in $H$. Then by the second point of Proposition~\ref{h_conjugacy}, it is conjugate in $H$ to a non-trivial power of the $x_i$'s, hence $\varphi^{-1}(R')\subseteq R$. 

By definition of $H$, we know that all the reflecting hyperplanes in $H$ are conjugate in $G=J\begin{pmatrix} k & n & m\\ ~& ~& ~\end{pmatrix}$, but it is not obvious that they are conjugate in $H$. But by the second point of Proposition~\ref{h_conjugacy}, we know that every reflection in $H$ is conjugate in $H$ to a non-trivial power of some $x_i$. To conclude the proof, it therefore suffices to show that all the $x_i$'s are conjugate in $H$. We use Theorem~\ref{part_I_main} to this end: since $n$ and $m$ are coprime, all $x_i$'s are conjugate to each other in $W(k,n,m)$, as a consequence of the relations $$\underbrace{x_1 x_2 \cdots}_{m~\text{factors}} = \underbrace{x_2 x_3\cdots}_{m~\text{factors}} = \dots = \underbrace{x_n x_1 \cdots}_{m~\text{factors}}.$$   
\end{proof}

The proof of Theorem~\ref{part_I_main} will occupy the remainder of the section. It will be obtained via an application of the Reidemeister-Schreier algorithm (see for instance~\cite{MKS}). 

\begin{exple}
For $k=2, n=3, m=4$, we get the presentation $$x_i^2=1\textit{~for all~}i, \ x_1 x_2 x_3 x_1=x_2 x_3 x_1 x_2 = x_3 x_1 x_2 x_3.$$ This is a presentation of the complex reflection group $G_{12}$ in Shephard-Todd notation (see for instance~\cite[Table A.3]{Broue}), and we recover (as in~\cite{AA}) that $J\begin{pmatrix} 2 & 3 & 4\\ ~& 3& 4\end{pmatrix}\cong G_{12}$. Similarly, for the values of $k,n,m$ given in Table~\ref{table_finite_toric} below we obtain all the finite toric reflection groups (the fact that they are the only finite toric groups follows from the classification given in~\cite{AA}). These are the finite complex reflection groups of rank two with a single conjugacy class of reflecting hyperplanes: 

\medskip
\begin{table}[h!]
\begin{center}
\begin{tabular}{|c|c|c|c|}
\hline
$k$ & $n$ & $m$ & $W(k,n,m)$\\
\thickhline
$2$ & $3$ & $4$ & $G_{12}$\\
\hline
$2$ & $3$ & $5$ & $G_{22}$\\
\hline
$3$ & $2$ & $3$ & $G_{4}$\\
\hline
$4$ & $2$ & $3$ & $G_{8}$\\
\hline
$5$ & $2$ & $3$ & $G_{16}$\\
\hline
$3$ & $2$ & $5$ & $G_{20}$\\
\hline
$2$ & $2$ & $\geq 3$ and odd & $G(m,m,2)=I_2(m)$\\
\hline
\end{tabular}
\end{center}
\caption{Finite toric reflection groups.\label{table_finite_toric}}
\end{table}
\end{exple}

\begin{rmq}
It is well-known that $G_{16}$, $G_{20}$ and $G_{22}$ are normal subgroups of $G_{19}=J\begin{pmatrix} 2 & 3 & 5\\ ~& ~& ~\end{pmatrix}$, that $G_4$ is a normal subgroup of $G_7=J\begin{pmatrix} 2 & 3 & 3\\ ~& ~& ~\end{pmatrix}$, and that $G_8$ and $G_{12}$ are normal subgroups of $G_{11}=J\begin{pmatrix} 2 & 3 & 4\\ ~& ~& ~\end{pmatrix}$ (see for instance~\cite[Chapter 6]{LT}). In particular, Theorem~\ref{part_I_main} reproves this fact and gives a way to express the generators of these subgroups in terms of the generators of $G_{19}$, $G_{7}$ and $G_{11}$ respectively (see also~\cite[Table 1]{MM}).  
\end{rmq}

Let $G, H$ be as in Theorem~\ref{part_I_main}. Note that, as $G/H\cong \mathbb{Z}/n\mathbb{Z} \times \mathbb{Z}/m\mathbb{Z}$, a system of representatives of the (right) cosets modulo $H$ is given by $H u^i t^j$, $0\leq i < m$, $0\leq j < n$. Note that the set $\mathcal{K}:=\{ u^i t^j\}_{0\leq i < m, 0 \leq j < n}$ yields a Schreier transversal for $G/H$. For $g\in G$, we denote by $\overline{g}$ the representative of $Hg$ in $\mathcal{K}$. Recall that a system of generators for $H$ is given by the elements of the form $k x \overline{kx}^{-1}$ where $k\in\mathcal{K}$ and $x\in\{s,t,u\}$. For $x\in\{s,t,u\}$ and $0\leq i < m$, $0\leq j < n$, we denote the generator $u^i t^j x \overline{u^i t^j x}^{-1}$ of $H$ by $x_{i,j}$. Among this set of generators, some of them are the identity, namely 
\begin{itemize}
\item $t_{i,j}=1$ for all $0\leq i < m$, $0\leq j < n$, 
\item $u_{i,0}=1$ for all $0\leq i < m$. 
\end{itemize}  
For simplicity of notation in the next proofs, it will be convenient to consider the first subscript in $x_{i,j}$ modulo $m$ and the second one modulo $n$, that is, to write $x_{i+m,j}=x_{i,j}$ and $x_{i,j+n}=x_{i,j}$ for all $i,j\in\mathbb{Z}$.  

The graph given by the action of generators of $G$ on (right) cosets modulo $H$ is given in Figure~\ref{fig_1} below. More precisely, if a generator $x\in\{s,t,u\}$ is such that $(H u^i t^j)x=H u^{i'} t^{j'}$, then we draw an arrow from the vertex $H u^i t^j$ to the vertex $H u^{i'} t^{j'}$, indexed by $x_{i,j}$. The plain arrows give the spanning tree corresponding to the Schreier transversal with respect to the generating set $\{s,t,u\}$ of $G$. In this case the corresponding generator $x$ is equal to $1$, and we simply label the corresponding arrow by $x$. The reason for such a notation is that it becomes easy using this graph to write down the relations given by the Reidemeister-Schreier algorithm: one simply applies the defining relations of $G$ at each vertex of the coset graph, following arrows from left to right: for instance, applying the relations $stu=tus=ust$ at the vertex $Hut$ (see Figure~\ref{ut}) yields the relations $$s_{1,1} u_{1,2}= u_{1,2} s_{2,2}=u_{1,1} s_{2,1}$$ as arrows labelled by $t$ have their corresponding generator equal to $1$.

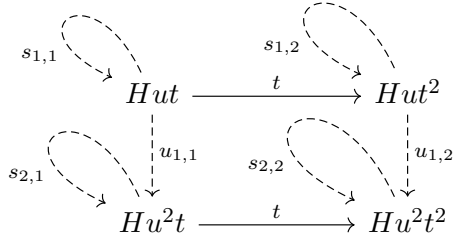
\begin{figure}[h!]
\begin{tikzcd} Hut\arrow[loop, dashed, out=115, in=155, looseness=10, "s_{1,1}"' near end]\arrow[r, "t"]\arrow[d, dashed, "u_{1,1}"]& [3em] Hut^2 \arrow[loop, dashed, out=115, in=155, looseness=10, "s_{1,2}"' near end]\arrow[d, dashed, "u_{1,2}"] \\ [2.5ex]
Hu^2t\arrow[loop, dashed, out=115, in=155, looseness=10, "s_{2,1}"' near end]\arrow[r, "t"]& Hu^2t^2 \arrow[loop, dashed, out=115, in=155, looseness=10, "s_{2,2}"' near end]
\end{tikzcd}
\caption{Application of the relations $stu=tus=ust$ at the vertex $Hut$ of the coset graph to obtain defining relations of $H$.\label{ut}} 
\end{figure}

\begin{figure}

\adjustbox{scale=0.95, center}{
\begin{tikzcd}H\arrow[r, "t"]\arrow[d, "u"]\arrow[loop above, dashed, "s_{0,0}"]& [2.5em] Ht\arrow[r,"t"]\arrow[d, dashed, "u_{0,1}"' near start]\arrow[loop above, dashed, "s_{0,1}"]& [2.5em] Ht^2 \arrow[r, "t"]\arrow[d, dashed, "u_{0,2}"]\arrow[loop above, dashed, "s_{0,2}"]& [0.5em] \dots \arrow[r,"t"]\arrow[d, dashed]& [1em] Ht^{n-1}\arrow[d, dashed, "u_{0,n-1}"]\arrow[loop above, dashed, "s_{0,n-1}"] \\ [2.5ex]
Hu\arrow[loop, dashed, out=120, in=160, looseness=10, "s_{1,0}"']\arrow[r, "t"]\arrow[d, "u"]& Hut\arrow[loop, dashed, out=115, in=155, looseness=10, "s_{1,1}"' near end]\arrow[r, "t"]\arrow[d, dashed, "u_{1,1}"]& Hut^2 \arrow[loop, dashed, out=115, in=155, looseness=10, "s_{1,2}"' near end]\arrow[r, "t"]\arrow[d, dashed, "u_{1,2}"]& \dots \arrow[r, "t"]\arrow[d, dashed]& Hut^{n-1}\arrow[loop, dashed, out=115, in=155, looseness=8, "s_{1,n-1}"' near end]\arrow[d, dashed, "u_{1,n-1}"] \\ [2.5ex]
Hu^2\arrow[loop, dashed, out=115, in=155, looseness=10, "s_{2,0}"']\arrow[r, "t"]\arrow[d, "u"]& Hu^2t\arrow[loop, dashed, out=115, in=155, looseness=10, "s_{2,1}"' near end]\arrow[r, "t"]\arrow[d, dashed]& Hu^2t^2 \arrow[loop, dashed, out=115, in=155, looseness=10, "s_{2,2}"' near end]\arrow[r, "t"]\arrow[d, dashed]& \dots \arrow[r, "t"]\arrow[d, dashed]& Hu^2t^{n-1}\arrow[d, dashed]\arrow[llll, dashed, out=194, in=346, color=red, "t_{2,n-1}=1" description] \\ [2.5ex]
\dots\arrow[r, "t"]\arrow[d, "u"]& \dots\arrow[r, "t"]\arrow[d, dashed]& \dots\arrow[r, "t"]\arrow[d, dashed]& \dots \arrow[r, "t"]\arrow[d, dashed]& \dots\arrow[d, dashed] \\ [2.5ex]
Hu^{m-2}\arrow[r, "t"]\arrow[d, "u"]& Hu^{m-2}t\arrow[r, "t"]\arrow[d, dashed, "u_{m-2,1}"']& Hu^{m-2}t^2 \arrow[r, "t"]\arrow[d, dashed, "u_{m-2,2}"']& \dots \arrow[r, "t"]\arrow[d, dashed]& Hu^{m-2}t^{n-1}\arrow[d, dashed, "u_{m-2,n-1}"'] \\ [2.5ex]
Hu^{m-1}\arrow[r, "t"]& Hu^{m-1}t\arrow[uuuuu, dashed, out=60, in=300, color=blue, "u_{m-1,1}"' description]\arrow[r, "t"]& Hu^{m-1}t^2 \arrow[r, "t"]& \dots \arrow[r, "t"]& Hu^{m-1}t^{n-1}
\end{tikzcd}
}
\caption{Coset graph of the (right) cosets of $H$ with action of the generators of $G$, and corresponding generators of $H$. For clarity we did not represent all the arrows, especially among those with label $t_{i,n-1}=1$, $u_{m-1, i}$, $s_{i,j}$. The plain arrows correspond to the spanning tree of the graph yielding the Schreier transversal (and have corresponding generator of $H$ equal to $1$).\label{fig_1}} 
\end{figure}
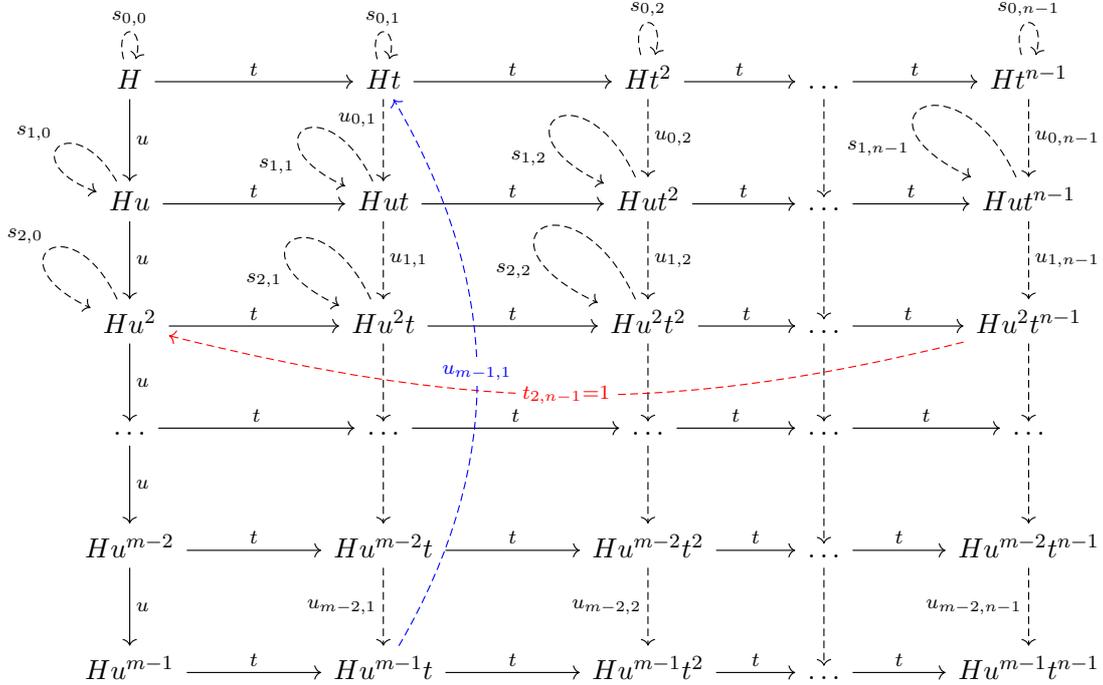   

We get the following relations: 

\medskip

\noindent\textbf{Relations coming from the relations $s^k=t^n=u^m=1$}. We get \begin{eqnarray}
s_{i,j}^k=1, \ \forall (i,j)\in\{0, \dots, m-1\}\times\{0, \dots, n-1\},\label{order_1}\\
u_{i,j} u_{i+1,j} \cdots u_{i+m-1,j}=1,\ \forall i\in\{0, \dots, m-1\}, j\in\{1,\dots, n-1\}.\label{order_4} 
\end{eqnarray}
\medskip
\noindent\textbf{Relations coming from the relation $stu=tus=ust$}.

\noindent\underline{First column of the graph in Figure~\ref{fig_1}:}
\begin{eqnarray}\label{first}
s_{i,0}u_{i,1}=u_{i,1} s_{i+1,1}= s_{i+1,0}, \ \forall i\in\{0, \dots, m-1\}.
\end{eqnarray}

\medskip

\noindent\underline{Column $j$, where $2\leq j \leq n-1$:}
\begin{eqnarray}\label{second}
s_{i,j-1}u_{i,j}=u_{i,j}s_{i+1,j}=u_{i,j-1}s_{i+1,j-1},\ \forall i\in\{0, \dots, m-1\}.
\end{eqnarray}

\noindent\underline{Column $n$:}
\begin{eqnarray}\label{third}
s_{i,n-1}=s_{i+1,0}=u_{i,n-1}s_{i+1,n-1},\ \forall i\in\{0, \dots, m-1\}.
\end{eqnarray}

We have already seen in Proposition~\ref{h_conjugacy} that the elements $x_i:=t^{i-1}st^{1-i}$ ($1\leq i \leq n$) generate $H$. Setting $s_i:=s_{0,i}$ ($1\leq i\leq n$) and considering indices of the $s_i$'s modulo $n$, for all $1\leq i \leq n$ we have \begin{align}\label{xist} s_{i-1}=s_{0,i-1}=t^{i-1} s \overline{(t^{i-1}s)}^{-1}=t^{i-1} s t^{1-i}=x_i\end{align} as $H t^{i-1}s=H \underbrace{t^{i-1}s t^{1-i}}_{\in H} t^{i-1}=H t^{i-1}$. The next proposition below therefore gives a new proof that the elements $x_1=s_0, x_2=s_1, \dots, x_n=s_{n-1}$ are enough to generate $H$, and gives a formula for all the elements $x_{i,j}$ of $H$ in terms of these generators.   

\begin{prop}\label{prop_generators}
Let $0\leq \ell \leq m-1$. \begin{enumerate}
\item Let $1\leq p \leq n$. We have $s_{m-1-\ell, p-1}=s_0 s_1 \cdots s_{\ell} s_{p+\ell} s_{\ell}^{-1} s_{\ell-1}^{-1} \cdots s_0^{-1}$.
\item Let $1\leq p \leq n-1$. We have $u_{m-1-\ell,p}=s_0 s_1 \cdots s_{\ell} s_{p+\ell}^{-1} s_{\ell-1}^{-1} s_{\ell-2}^{-1} \cdots s_0^{-1}$.
In particular, the $s_i$'s generate $H$. 
\end{enumerate}
\end{prop}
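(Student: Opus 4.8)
The plan is to prove both displayed formulas by a single induction on $\ell$, after first using the Reidemeister--Schreier relations \eqref{first}, \eqref{second}, \eqref{third} to eliminate the generators $u_{i,j}$ in favour of the $s_{i,j}$. This collapses all the relations into one clean recursion on the $s_{i,j}$ alone, at which point the two formulas become routine index-tracking; formula~(2) will then drop out of formula~(1) without a separate induction.

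First I would extract the identity $u_{i,j}\,s_{i+1,j}=s_{i+1,0}$ for all $1\le j\le n-1$: the case $j=1$ is the second equality in \eqref{first}, and the general case follows by telescoping the middle equalities $u_{i,j}s_{i+1,j}=u_{i,j-1}s_{i+1,j-1}$ of \eqref{second} down to this base case (relation \eqref{third} provides the consistent value at $j=n-1$). Together with $u_{i,0}=1$ this yields $u_{i,j}=s_{i+1,0}\,s_{i+1,j}^{-1}$ for $0\le j\le n-1$. Substituting this expression into the first equality of \eqref{first} (for $j=1$) and into the outer equality of \eqref{second} (for $2\le j\le n-1$), every $u$ cancels and one is left with the single recursion
$$s_{i+1,j}=s_{i+1,0}^{-1}\,s_{i,j-1}\,s_{i+1,0},\qquad 1\le j\le n-1,$$
valid for every $i$ (rows read modulo $m$), together with the boundary relation $s_{i,n-1}=s_{i+1,0}$ from \eqref{third}.

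With these two rules in hand I prove formula~(1) by induction on $\ell$. For the base case $\ell=0$ I specialize the recursion to $i=m-1$: since row indices are read modulo $m$ we have $s_{m,0}=s_0$ and $s_{m,j}=s_j$, so the recursion gives $s_{m-1,p-1}=s_0 s_p s_0^{-1}$ for $1\le p\le n-1$, while the case $p=n$ is exactly the boundary relation $s_{m-1,n-1}=s_{m,0}=s_0$. For the inductive step I read the recursion backwards as $s_{m-1-(\ell+1),\,p-1}=s_{m-1-\ell,0}\,s_{m-1-\ell,p}\,s_{m-1-\ell,0}^{-1}$ and substitute the level-$\ell$ instances of (1) for $s_{m-1-\ell,0}$ (its $p=1$ case) and for $s_{m-1-\ell,p}$; the conjugating factors $(s_0\cdots s_\ell)^{\pm1}$ telescope and, using $s_0\cdots s_{\ell+1}=(s_0\cdots s_\ell)s_{\ell+1}$, collapse to the claimed form at level $\ell+1$. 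The wrap-around case $p=n$ is handled separately by the boundary relation $s_{i,n-1}=s_{i+1,0}$ together with the periodicity $s_{p+\ell}=s_{(p+\ell)\bmod n}$. Formula~(2) then follows immediately by inserting the instances of (1) for $s_{m-\ell,0}$ and $s_{m-\ell,p}$ into $u_{m-1-\ell,p}=s_{m-\ell,0}\,s_{m-\ell,p}^{-1}$ and telescoping.

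Finally, since $t_{i,j}=1$ for all $i,j$ and $u_{i,0}=1$ for all $i$, and since formulas (1) and (2) express every remaining Schreier generator $s_{i,j}$ (all $i,j$) and $u_{i,j}$ (all $i$, $1\le j\le n-1$) as words in $s_0,\dots,s_{n-1}$, these elements generate $H$. The main obstacle is not any individual computation but the careful bookkeeping of the two independent index ranges---the $s$-subscripts read modulo $n$ and the row indices read modulo $m$---and in particular invoking the two boundary phenomena (the column wrap at $p=n$, governed by \eqref{third}, and the cyclic identification of row $m$ with row $0$) exactly where the plain recursion breaks down.
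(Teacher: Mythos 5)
Your proof is correct, and it is built from the same raw material as the paper's---the Reidemeister--Schreier relations \eqref{first}, \eqref{second}, \eqref{third} and an induction on $\ell$---but it organizes the computation differently and more cleanly. The paper proves formulas (1) and (2) simultaneously by an interlocked double induction: an outer induction on $\ell$ (base case Lemma~\ref{lem:l0}) whose every step contains an inner induction on $p$, with the $u_{i,j}$ and $s_{i,j}$ updated in tandem at each stage. You instead eliminate the $u$'s once and for all: telescoping the middle equalities of \eqref{second} down to \eqref{first} gives $u_{i,j}s_{i+1,j}=s_{i+1,0}$ for all $1\le j\le n-1$, hence $u_{i,j}=s_{i+1,0}s_{i+1,j}^{-1}$, and substituting back collapses everything to the single uniform recursion $s_{i,j-1}=s_{i+1,0}\,s_{i+1,j}\,s_{i+1,0}^{-1}$ plus the boundary identity $s_{i,n-1}=s_{i+1,0}$ from \eqref{third}. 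Since both $s_{m-1-\ell,0}$ and $s_{m-1-\ell,p}$ are known from the level-$\ell$ instance of (1), your inductive step needs no inner induction on $p$ at all, and (2) becomes an immediate corollary of (1) rather than a co-inducted statement (with the degenerate case $\ell=0$ of (2) supplied directly by the periodicity $s_{m,j}=s_{j}$). I verified the index bookkeeping: the wrap at $p=n$ via \eqref{third} together with $s_{n+\ell+1}=s_{\ell+1}$, the row identification $s_{m,j}=s_{0,j}$ in the base case, and the telescoping of the conjugating factors $(s_0\cdots s_\ell)^{\pm1}$ all check out. What your version buys is a conceptual separation---one structural lemma eliminating the redundant Schreier generators, then routine conjugation bookkeeping---at the cost of nothing; the paper's version keeps explicit word identities for each $s_{i,j}u_{i,j'}$ product along the way, which it happens to reuse verbatim in the concluding verification of Theorem~\ref{part_I_main}, but your recursion would serve that purpose equally well.
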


The proof of Proposition~\ref{prop_generators} will be by induction on $\ell$. The Lemma below deals with the case $\ell=0$. 

\begin{lemma}\label{lem:l0}
\begin{enumerate}
\item Let $1\leq p \leq n$. We have $s_{m-1, p-1}=s_0 s_{p} s_0^{-1}$.
\item Let $1\leq p \leq n-1$. We have $u_{m-1, p}=s_0 s_p^{-1}$.
\end{enumerate}
\end{lemma}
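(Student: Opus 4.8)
The plan is to read the Reidemeister--Schreier relations \eqref{first}, \eqref{second} and \eqref{third} at the \emph{bottom} row of the coset graph, namely at the vertices $Hu^{m-1}t^j$, and to exploit the convention $x_{i+m,j}=x_{i,j}$: when $i=m-1$ every generator $s_{i+1,\bullet}=s_{m,\bullet}$ collapses to the top-row loop generator $s_{0,\bullet}=s_\bullet$, which is exactly the preferred generating set. Thus the bottom-row instances of \eqref{first}, \eqref{second} and \eqref{third} tie each $u_{m-1,\bullet}$ and $s_{m-1,\bullet}$ directly to the $s_j$'s, and both formulas should drop out after an induction on the column index. I do not expect to need relations \eqref{order_1} or \eqref{order_4} for this base case.

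First I would establish the formula for the $u$-generators, part (2), by induction on $p$. For the base case $p=1$, specializing \eqref{first} to $i=m-1$ reads $u_{m-1,1}s_{0,1}=s_{0,0}$ (using $s_{m,1}=s_{0,1}$ and $s_{m,0}=s_{0,0}$), whence $u_{m-1,1}=s_0s_1^{-1}$. For the inductive step with $2\le p\le n-1$, the outer two terms of \eqref{second} at $i=m-1$ give $u_{m-1,p}s_{0,p}=u_{m-1,p-1}s_{0,p-1}$, so that $u_{m-1,p}=u_{m-1,p-1}\,s_{p-1}s_p^{-1}$; substituting the induction hypothesis $u_{m-1,p-1}=s_0s_{p-1}^{-1}$ yields $u_{m-1,p}=s_0s_p^{-1}$, completing part (2).

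Next I would deduce part (1) from part (2). For $1\le p\le n-1$, the equality between the first and last members of \eqref{first} (for $p=1$) and of \eqref{second} (for $p\ge 2$) at $i=m-1$ can be written uniformly as $s_{m-1,p-1}u_{m-1,p}=u_{m-1,p-1}s_{0,p-1}$, with the convention $u_{m-1,0}=1$; solving for $s_{m-1,p-1}$ and inserting the values from part (2) gives $s_{m-1,p-1}=(s_0s_{p-1}^{-1})\,s_{p-1}\,(s_0s_p^{-1})^{-1}=s_0s_ps_0^{-1}$. The remaining boundary value $p=n$, i.e.\ $s_{m-1,n-1}$, is not covered by \eqref{second} and is instead read off \eqref{third} at $i=m-1$: it gives $s_{m-1,n-1}=s_{0,0}=s_0$, which equals $s_0s_ns_0^{-1}$ since $s_n=s_0$ by the index convention. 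This exhausts the range $1\le p\le n$.

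The computations are purely formal rewriting, so I do not expect a genuine obstacle; the only thing that requires care is the index bookkeeping --- reducing the first subscript modulo $m$ and the second modulo $n$ --- together with the correct handling of the boundary columns $p=1$ and $p=n$, where \eqref{first} and \eqref{third} must be used in place of the generic relation \eqref{second}. Once the bottom-row relations are written down correctly, both identities follow immediately, and they will serve as the initialization of the induction on $\ell$ in Proposition~\ref{prop_generators}.
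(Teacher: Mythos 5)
Your proof is correct and is essentially the paper's own argument: both specialize relations \eqref{first}--\eqref{third} at $i=m-1$, exploit the conventions $s_{m,j}=s_{0,j}=s_j$ and $u_{m-1,0}=1$, and induct on the column index, the only (cosmetic) difference being that the paper runs a single simultaneous induction on both formulas while you prove part (2) first and then read off part (1) directly, with the $p=n$ boundary handled identically via \eqref{third}. One small wording slip: your equation $u_{m-1,p}s_{0,p}=u_{m-1,p-1}s_{0,p-1}$ equates the \emph{middle} and last members of \eqref{second}, not the outer two, but since all three members are equal this is harmless.
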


\begin{proof}
We argue by induction on $p$ to show both statements for $1\leq p \leq n-1$. Relation~\eqref{first} (with $i=m-1$) yields $$s_{m-1,0} u_{m-1,1}=u_{m-1,1} s_{1}=s_0,$$ from what we deduce that $u_{m-1,1}=s_0 s_1^{-1}$ and $s_{m-1,1}= s_0 s_1 s_0^{-1}$. Hence the statement of the lemma holds true for $p=1$.

Assume that the result holds true for some $1\leq p \leq n-2$. Relation~\eqref{second} (with $j=p+1$ and $i=m-1$) yields $$s_{m-1,p}u_{m-1,p+1}=u_{m-1,p+1} s_{p+1}=u_{m-1,p} s_{p},$$ from what by induction we get $u_{m-1,p+1}=u_{m-1,p} s_p s_{p+1}^{-1}= s_0 s_p^{-1}  s_p s_{p+1}^{-1}=s_0 s_{p+1}^{-1},$ and $s_{m-1,p}=u_{m-1,p+1} s_{p+1}u_{m-1,p+1}^{-1}=s_0 s_{p+1} s_0^{-1},$ hence both statements also hold true for $p+1$. Hence the result holds true for all $1\leq p \leq n-1$ and it remains to check the first statement for $p=n$, that is, that $s_{m-1,n-1}=s_0 s_{n} s_0^{-1}=s_0$. But this holds true as an immediate consequence of Relation~\eqref{third} with $i=m-1$. 
\end{proof}

\begin{proof}[Proof of Proposition~\ref{prop_generators}]
The proof is by induction on $\ell$. The case $\ell=0$ was treated in Lemma~\ref{lem:l0} above. Assume that the claimed formulas hold true for some $0\leq \ell \leq m-2$.

We show that the claimed relations hold true for $\ell+1$ by induction on $p$ as in Lemma~\ref{lem:l0}. Relation~\eqref{first} (with $i=m-2-\ell$) yields \begin{equation}\label{eq_11} s_{m-2-\ell,0} u_{m-2-\ell,1}=u_{m-2-\ell,1} s_{m-1-\ell,1}=s_{m-1-\ell,0},\end{equation} from what by induction (on $\ell$) we get \begin{align*} u_{m-2-\ell,1}&=s_{m-1-\ell,0}s_{m-1-\ell,1}^{-1}=(s_0 s_1 \cdots s_{\ell} s_{\ell+1} s_{\ell}^{-1} \cdots s_0^{-1})(s_0 s_1 \cdots s_{\ell} s_{\ell+2} s_{\ell}^{-1} \cdots s_0^{-1})\\&=s_0 s_1 \cdots s_{\ell} s_{\ell+1} s_{\ell+2}^{-1} s_{\ell}^{-1} \cdots s_0^{-1}.\end{align*}

Using this together with~\eqref{eq_11} again we get also by induction (on $\ell$) that \begin{align*} s_{m-2-\ell,0}&=s_{m-1-\ell,0}u_{m-2-\ell,1}^{-1}\\ &=(s_0 s_1 \cdots s_{\ell} s_{\ell+1} s_{\ell}^{-1} \cdots s_0^{-1})(s_0 s_1 \cdots s_{\ell} s_{\ell+2} s_{\ell+1}^{-1} s_{\ell}^{-1} \cdots s_0^{-1})\\ &=s_0 s_1 \cdots s_{\ell} s_{\ell+1} s_{\ell+2} s_{\ell+1}^{-1} s_{\ell}^{-1} \cdots s_0^{-1}  \end{align*} 
 
Hence the statement holds true for $p=1$.

Assume that the result holds true for some $1\leq p \leq n-2$. Relation~\eqref{second} (with $j=p+1$ and $i=m-2-\ell$) yields \begin{equation}\label{eq_2} s_{m-2-\ell,p}u_{m-2-\ell,p+1}=u_{m-2-\ell,p+1} s_{m-1-\ell,p+1}=u_{m-2-\ell,p} s_{m-1-\ell,p},\end{equation} from what by induction (on $\ell$ and $p$) we get \begin{align*} u_{m-2-\ell,p+1}&=u_{m-2-\ell,p} s_{m-1-\ell,p}s_{m-1-\ell,p+1}^{-1}\\ &= (s_0 s_1 \cdots s_{\ell+1} s_{p+\ell+1}^{-1} s_{\ell}^{-1} s_{\ell-1}^{-1} \cdots s_0^{-1})(s_0 s_1 \cdots s_{\ell} s_{p+\ell+1} s_{\ell}^{-1} s_{\ell-1}^{-1} \cdots s_0^{-1})\\ &~~~~\cdot (s_0 s_1 \cdots s_{\ell} s_{p+\ell+2}^{-1} s_{\ell}^{-1} s_{\ell-1}^{-1} \cdots s_0^{-1})\\ &=s_0 s_1 \cdots s_{\ell+1}s_{p+\ell+2}^{-1} s_{\ell}^{-1} s_{\ell-1}^{-1} \cdots s_0^{-1}. \end{align*}

Using this together with~\eqref{eq_2} again we get also by induction (on $\ell$ and $p$) that \begin{align*} s_{m-2-\ell,p}&=u_{m-2-\ell,p} s_{m-1-\ell,p}u_{m-2-\ell,p+1}^{-1}\\ &= (s_0 s_1 \cdots s_{\ell+1} s_{p+\ell+1}^{-1} s_{\ell}^{-1} s_{\ell-1}^{-1} \cdots s_0^{-1})(s_0 s_1 \cdots s_{\ell} s_{p+\ell+1} s_{\ell}^{-1} s_{\ell-1}^{-1} \cdots s_0^{-1})\\ &~~~~\cdot (s_0 s_1 \cdots s_{\ell}s_{p+\ell+2} s_{\ell+1}^{-1}s_{\ell}^{-1} \cdots s_0^{-1})\\ &=s_0 s_1 \cdots s_{\ell+1}s_{p+\ell+2}^{-1} s_{\ell+1}^{-1}s_{\ell}^{-1} s_{\ell-1}^{-1} \cdots s_0^{-1},\end{align*}
hence both statements also hold true for $p+1$. Hence the result holds true at level $\ell+1$ for all $1\leq p \leq n-1$ and it remains to check the first statement (at level $\ell+1$) for $p=n$, that is, that $$s_{m-2-\ell,n-1}=s_0 s_1 \cdots s_{\ell+1} s_{n+1+\ell} s_{\ell+1}^{-1} s_{\ell}^{-1} \cdots s_0^{-1}=s_0 s_1 \cdots s_{\ell+1} s_{\ell}^{-1} \cdots s_0^{-1}.$$ To this end we use Relation~\eqref{third} with $i=m-2-\ell$, that is, $$s_{m-2-\ell,n-1}=s_{m-1-\ell,0}=u_{m-2-\ell,n-1} s_{m-1-\ell,n-1}.$$
But already know by induction (on $\ell$) that $s_{m-1-\ell,0}=s_0 s_1 \cdots s_{\ell+1} s_{\ell}^{-1} \cdots s_0^{-1}$ (alternatively both factors in the product $u_{m-2-\ell,n-1} s_{m-1-\ell,n-1}$ are also already known, and multiplying them yields the same result).
\end{proof}

Setting $\ell=m-1$ in the first item of Proposition~\ref{prop_generators} we get \begin{equation}\label{gen_brute}s_{i-1}=s_0 s_1 \cdots s_{m-1} s_{i+m-1} s_{m-1}^{-1} \cdots s_1^{-1} s_0^{-1},\ \forall 1\leq i \leq n.\end{equation}

\begin{lemma}\label{rel_equiv}
Setting $x_i=s_{i-1}$ for all $1\leq i \leq n$, the set of relations~\eqref{gen_brute} is equivalent to the second set of relations in the statement of Theorem~\ref{part_I_main}, that is, to $$x_1 x_2 \cdots x_m = x_i x_{i+1} \cdots x_{i+m-1}, ~\forall i=2, \dots, n.$$
\end{lemma}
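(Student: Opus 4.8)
The plan is to rewrite both relation sets in a common, conjugation-friendly notation. Set $P := x_1 x_2 \cdots x_m$ and, for $1\le i\le n$ (indices mod $n$), $Q_i := x_i x_{i+1}\cdots x_{i+m-1}$, so that $Q_1 = P$ by definition. With $x_i = s_{i-1}$, relation~\eqref{gen_brute} reads $x_i = P\,x_{i+m}\,P^{-1}$, equivalently $P^{-1} x_i P = x_{i+m}$ for all $i$; while the relations in Theorem~\ref{part_I_main} read $Q_i = P$ for $i = 2,\dots,n$ (equivalently $Q_1 = Q_2 = \cdots = Q_n$). I would prove the two relation sets equivalent by showing each follows from the other inside the free group on $x_1,\dots,x_n$ (indices mod $n$), using throughout the free-group identity $Q_{i+1} = x_i^{-1} Q_i x_{i+m}$, which is immediate from the definition of $Q_i$.

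For the implication ``Theorem relations $\Rightarrow$~\eqref{gen_brute}'', I would argue as follows. Assuming $Q_i = Q_{i+1} = P$ for all $i$ (which holds since $Q_{n+1} = Q_1$ modulo $n$), the identity $Q_{i+1} = x_i^{-1} Q_i x_{i+m}$ gives $P = x_i^{-1} P x_{i+m}$, and rearranging yields exactly $x_i = P x_{i+m} P^{-1}$, i.e.~\eqref{gen_brute}. This direction is formal and needs no arithmetic hypothesis.

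The converse is the heart of the matter, and is where coprimality of $n$ and $m$ enters. Assuming~\eqref{gen_brute}, conjugation by $P^{-1}$ sends each generator $x_j$ to $x_{j+m}$; applying this factor by factor to $Q_i$ gives $P^{-1} Q_i P = Q_{i+m}$. Taking $i = 1$ and using the tautology $Q_1 = P$, the left-hand side is $P^{-1} P P = P$, so $Q_{1+m} = P$. Iterating, $P^{-1} Q_{1+km} P = Q_{1+(k+1)m}$ together with $Q_{1+km} = P$ gives $Q_{1+(k+1)m} = P$ for every $k\ge 0$. Since $\gcd(n,m) = 1$, the residues $1 + km \pmod n$ exhaust $\{1,\dots,n\}$, so $Q_i = P$ for all $i$, which is the set of Theorem relations.

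The main obstacle I anticipate is precisely this converse: the single relation $Q_1 = P$ is a tautology and carries no content, so one must generate all the equalities $Q_i = P$ from the conjugation action of $P$. The crucial observation is that~\eqref{gen_brute} encodes that conjugation by $P$ implements the index-shift $j\mapsto j-m$ on the generators, an $n$-cycle when $\gcd(n,m)=1$; this is what lets a single instance propagate to all indices. I would double-check the bookkeeping of indices mod $n$, in particular that the products $Q_i$ may wrap around when $m > n$ (as happens in the main case $n < m$), but no genuine difficulty should remain beyond that.
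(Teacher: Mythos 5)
Your proof is correct, and your first direction (Theorem relations $\Rightarrow$~\eqref{gen_brute}) coincides with the paper's: both rest on the free-group identity $x_i Q_{i+1} = Q_i x_{i+m}$ together with the wrap-around $Q_{n+1}=Q_1$, where $P=x_1\cdots x_m$ and $Q_i=x_i\cdots x_{i+m-1}$ in your notation. For the other direction, however, you take a genuinely different route. The paper argues by consecutive induction on $j$, with no arithmetic input: relation~\eqref{gen_brute} with $i=1$ reads $x_1 P = P x_{m+1}$, and cancelling the leading $x_1$ gives $P=Q_2$; then, assuming $P=Q_j$, the relation $x_j P = P x_{j+m}$ becomes $x_j P = Q_j x_{j+m}$, and cancelling $x_j$ yields $P=Q_{j+1}$. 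In particular the equivalence of the two sets of relations holds for arbitrary $n,m\geq 2$, coprime or not. You instead read~\eqref{gen_brute} as saying that conjugation by $P^{-1}$ shifts generator indices by $m$, deduce $P^{-1}Q_iP=Q_{i+m}$, and propagate the tautology $Q_1=P$ along the orbit of $1$ under $j\mapsto j+m$ modulo $n$, invoking $\gcd(n,m)=1$ to exhaust all residues. Since coprimality is a standing hypothesis in this setting, your argument is valid, and it has a structural merit: it makes explicit that conjugation by $\delta=x_1\cdots x_m$ implements the $m$-shift on the generators, which is exactly the content of Lemma~\ref{delta} later in the paper, and it isolates the same coprimality mechanism that makes all the $x_i$ conjugate in Corollary~\ref{coro_single_class}. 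What the paper's induction buys is a marginally more elementary proof and independence from the coprimality hypothesis; what yours buys is a conceptual explanation of why the single tautological instance $Q_1=P$ generates all the relations. One small remark: the asymmetry you anticipated (the converse being ``the heart of the matter'') does not materialize in the paper's treatment, where both directions are short cancellation arguments.
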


\begin{proof}
The set of relations~\eqref{gen_brute} can be rewritten \begin{equation}\label{gen_brute_2}x_i x_1 x_2 \cdots x_m=x_1 x_2\cdots x_m x_{i+m},~\forall 1\leq i \leq n.\end{equation} We show that they imply the relations $x_1 x_2 \cdots x_m = x_j x_{j+1} \cdots x_{j+m-1}, ~\forall j=2, \dots, n$, by induction on $j$. Putting $i=1$ in~\eqref{gen_brute_2} and cancelling $x_1$ we get the relation $x_1 x_2\dots x_m= x_2 x_3\cdots x_{m+1}$, which proves the case $j=2$. Assume that $x_1 x_2 \cdots x_m = x_j x_{j+1} \cdots x_{j+m-1}$ for some $2\leq j < n$. From~\eqref{gen_brute_2} and by induction we get $$x_{j} x_1 x_2 \cdots x_m=x_1 x_2\cdots x_m x_{j+m}=x_j x_{j+1}\cdots x_{j+m-1} x_{j+m}.$$ Cancelling $x_j$ on both sides yields $x_1 x_2 \cdots x_m=x_{j+1}\cdots x_{j+m-1} x_{j+m}$. Hence the relations in~\eqref{gen_brute} imply the second set of relations. 

Conversely, if the second set of relations hold true, then for all $1 \leq i \leq n$ we have \begin{align*} x_i x_1 x_2 \cdots x_m&= x_i x_{i+1}\cdots x_{i+m-1} x_{i+m}=x_1 x_2\cdots x_m x_{i+m},\end{align*} which concludes the proof.
\end{proof}

\begin{proof}[End of the proof of Theorem~\ref{part_I_main}]
It follows from Proposition~\ref{prop_generators} that the $x_i$'s generate $H$. Lemma~\ref{rel_equiv} shows that the second set of relations in the statement of Theorem~\ref{part_I_main} hold true in $H$. We also know that the first set of relations hold true as a consequence of~\eqref{order_1}. To conclude the proof of Theorem~\ref{part_I_main}, it therefore remains to show that, replacing the various $u_{i,j}$ and $s_{i,j}$ in Relations~\eqref{first} to \eqref{third} by their expressions in terms of $s_0, s_1, \dots, s_{n-1}$ obtained in Proposition~\ref{prop_generators}, we get no other relations than those in the statement of Theorem~\ref{part_I_main}. 

This is a direct check. For Relations~\eqref{order_1} to~\eqref{order_4}, as by Proposition~\ref{prop_generators}, every $s_{i,j}$ is conjugate to some $s_i$, we get that Relations~\eqref{order_1} follow from the relations $x_i^k=1$ for all $1\leq i \leq n$. We now consider Relations~\eqref{order_4}. These relations tell us that, $\forall i\in\{0, m-1\}, j\in\{1, n-1\}$, we have $\prod_{q=i}^{i+m-1} u_{q,j}=1$. We separate this product as $\prod_{q=i}^{m-1} u_{q,j} \prod_{q=m}^{m-1+i} u_{q,j}$. Proposition~\ref{prop_generators} yields \begin{align*} \prod_{q=i}^{m-1} u_{q,j} =&\prod_{q=i}^{m-1} s_0 s_1 \cdots s_{m-1-q} s_{j+m-1-q}^{-1} s_{m-2-q}^{-1} s_{m-3-q}^{-1} \cdots s_0^{-1} \\& =s_0 s_1 \cdots s_{m-1-i} s_{j+m-1-i}^{-1} s_{j+m-2-i}^{-1} \cdots s_{j}^{-1},\end{align*} while \begin{align*} \prod_{q=m}^{m-1+i} u_{q,j}&=\prod_{q=0}^{i-1} u_{q,j}=s_0 s_1 \cdots s_{m-1-q} s_{j+m-1-q}^{-1} s_{m-2-q}^{-1} s_{m-3-q}^{-1} \cdots s_0^{-1}\\ &=s_0 s_1 \cdots s_{m-1} s_{j+m-1}^{-1}s_{j+m-2}^{-1}\cdots s_{j+m-i}^{-1} s_{m-i-1}^{-1} s_{m-i-2}^{-1}\cdots s_0^{-1}.\end{align*}

Now we have the relation $\prod_{q=i}^{m-1} u_{q,j} \prod_{q=m}^{m-1+i} u_{q,j}=1$. Replacing the expressions obtained above and conjugating by $(s_0 s_1 \cdots s_{m-1-j})^{-1}$ we get the relation $$s_{j+m-1-i}^{-1} s_{j+m-2-i}^{-1} \cdots s_{j}^{-1}s_0 s_1 \cdots s_{m-1} s_{j+m-1}^{-1}s_{j+m-2}^{-1}\cdots s_{j+m-i}^{-1}=1.$$ Putting the inverses in the right hand side and replacing $s_{i-1}$ by $x_i$ for all $i$ we get the equivalent relation $x_1 x_2 \cdots x_m=x_{j+1} x_{j+2} \cdots x_{j+m},$ which we already obtained in Lemma~\ref{rel_equiv} above. 

Hence Relations~\eqref{order_1} and~\eqref{order_4} yield no new relation in $H$. We need to check that the same holds true for the remaining Relations~\eqref{first} to~\eqref{third}. We check it for~\eqref{first}. For any $0\leq i \leq m-1$ we have \begin{align*} s_{i,0} u_{i,1}&=(s_0 s_1 \cdots s_{m-1-i} s_{m-i} s_{m-1-i}^{-1} \cdots s_0^{-1})(s_0 s_1 \cdots s_{m-1-i} s_{m-i}^{-1} s_{m-2-i}^{-1} \cdots s_0^{-1})\\ &=s_0 s_1 \cdots s_{m-1-i}s_{m-2-i}^{-1} \cdots s_0^{-1},\end{align*} \begin{align*} u_{i,1} s_{i+1,1}&=(s_0 s_1 \cdots s_{m-1-i} s_{m-i}^{-1} s_{m-2-i}^{-1} \cdots s_0^{-1})(s_0 s_1 \cdots s_{m-i-2} s_{m-i} s_{m-i-2}^{-1} \cdots s_0^{-1})\\ &=s_0 s_1 \cdots s_{m-1-i}s_{m-2-i}^{-1} \cdots s_0^{-1},\end{align*}
and we thus see that both $s_{i,0} u_{i,1}$ and $u_{i,1} s_{i+1,1}$ yield the same words and by Proposition~\ref{prop_generators} they are also equal to a word for $s_{i+1,0}$. We only deleted factors of the form $xx^{-1}$ to obtain these equalities, hence these relations do not yield any relation in $H$ at all.

It is checked that Relations~\eqref{second} and \eqref{third} do not give any new relation in $H$ in the exact same way as we did for~\eqref{first} above. For~\eqref{second}, replacing the various factors using Proposition~\ref{prop_generators} and deleting $xx^{-1}$ factors as above, one sees that $s_{i,j-1} u_{i,j}$, $u_{i,j} s_{i+1,j}$ and $u_{i,j-1}s_{i+1,j-1}$ again yield the same words, equal to the word for $s_{i+1,0}$ obtained in Proposition~\ref{prop_generators}. For~\eqref{third} we also get that $s_{i,n-1}$ and $u_{i,n-1} s_{i+1,n-1}$ yield the same words, also equal to the word for $s_{i+1,0}$ obtained in Proposition~\ref{prop_generators}. 

The last statement that $x_i$ corresponds in $G$ to $t^{i-1} s t^{1-i}$ has already been seen in~\eqref{xist} above.
\end{proof}

\subsection{Representation by complex reflection groups}\label{rep}

Achar and Aubert~\cite[Section 4]{AA} constructed a representation of $G=J\begin{pmatrix} a & b & c\\ ~& ~& ~\end{pmatrix}$ (and hence of any $J$-group) on $\mathbb{C}^2$, where the generators $s,t$ and $u$ act by (pseudo-)reflections. This yields reflection representations of $J$-groups and hence, thanks to Theorem~\ref{part_I_main}, of every toric reflection group. As we shall see in this section, this representation is \textit{not} faithful in general when $W(k,n,m)$ is infinite.

The representation is constructed as follows. Let $K$ be a finite abelian extension of $\mathbb{Q}$ containing the roots of unity of orders $2a, 2b,$ and $2c$. Let $\mathbb{Z}_{K}$ be the ring of algebraic integers of $K$. Let $\theta=e^{i\pi/a}$, $\phi=e^{i\pi/b}$, and $\psi=e^{i\pi/c}$. Choose two elements $q,r\in \mathbb{Z}_K$ such that $qr=\theta\phi(\psi+\psi^{-1})-\theta^2-\phi^2$. Then setting $$\rho(s)=\begin{pmatrix} \theta^2 & q \\ 0 & 1\end{pmatrix}, \ \rho(t)=\begin{pmatrix} 1 & 0\\ r & \phi^2 \end{pmatrix}, \ \rho(u)=\theta \phi\psi \rho(t)^{-1} \rho(s)^{-1}$$ we have

\begin{prop}[{\cite[Proposition 4.2]{AA}}] 
The map $\rho$ extends to a group homomorphism $G\longrightarrow\mathrm{GL}_2(K)$. 
\end{prop}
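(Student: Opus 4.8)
The plan is to invoke the universal property of the presentation of $G$: since $G=\langle\, s,t,u \mid s^a=t^b=u^c=1,\ stu=tus=ust\,\rangle$, the assignment $s\mapsto\rho(s)$, $t\mapsto\rho(t)$, $u\mapsto\rho(u)$ extends to a group homomorphism as soon as the three matrices lie in $\mathrm{GL}_2(K)$ and satisfy all the defining relations. Membership and invertibility are immediate: every entry involves only $q,r\in\mathbb{Z}_K$ and (inverse) powers of the roots of unity $\theta,\phi,\psi\in K$, and one computes $\det\rho(s)=\theta^2$, $\det\rho(t)=\phi^2$, hence $\det\rho(u)=(\theta\phi\psi)^2\theta^{-2}\phi^{-2}=\psi^2$, all nonzero. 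So the real content is checking the relations.

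First I would dispatch the relations $stu=tus=ust$, which turn out to be automatic from the definition of $\rho(u)$. Indeed $\rho(u):=\theta\phi\psi\,\rho(t)^{-1}\rho(s)^{-1}$ is tailored so that $\rho(s)\rho(t)\rho(u)=\theta\phi\psi\,I$ is scalar; substituting likewise and cancelling the inner $\rho(s)^{-1}\rho(s)$, respectively $\rho(t)^{-1}\rho(t)$, gives $\rho(t)\rho(u)\rho(s)=\rho(u)\rho(s)\rho(t)=\theta\phi\psi\,I$ as well. Thus all three cyclic words map to the same central scalar matrix---consistent with $stu$ being central in $G$---and these relations hold.

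Next come the order relations. The matrix $\rho(s)$ is upper triangular with diagonal $(\theta^2,1)$; raising it to the $a$-th power turns the diagonal into $((\theta^2)^a,1)=(1,1)$, while the off-diagonal entry becomes $q\sum_{j=0}^{a-1}\theta^{2j}=q\,\frac{\theta^{2a}-1}{\theta^2-1}=0$ since $\theta^2=e^{2i\pi/a}$ is a primitive $a$-th root of unity; hence $\rho(s)^a=I$. The relation $\rho(t)^b=I$ follows identically from the lower-triangular shape of $\rho(t)$ and $\phi^2=e^{2i\pi/b}$.

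The main obstacle is $\rho(u)^c=I$, and this is exactly where the hypothesis $qr=\theta\phi(\psi+\psi^{-1})-\theta^2-\phi^2$ is used. Rather than computing $\rho(u)^c$ directly, I would pin down the characteristic polynomial of $\rho(u)$. I already have $\det\rho(u)=\psi^2$; it remains to compute $\mathrm{tr}\,\rho(u)$. Writing $\rho(u)=\theta\phi\psi\,\rho(t)^{-1}\rho(s)^{-1}$ explicitly, the trace equals $\psi\bigl(\theta^{-1}\phi+qr\,\theta^{-1}\phi^{-1}+\theta\phi^{-1}\bigr)$, and substituting the prescribed value of $qr$ turns $qr\,\theta^{-1}\phi^{-1}$ into $(\psi+\psi^{-1})-\theta\phi^{-1}-\theta^{-1}\phi$, so that the terms $\theta^{-1}\phi$ and $\theta\phi^{-1}$ cancel and the bracket collapses to $\psi+\psi^{-1}$; hence $\mathrm{tr}\,\rho(u)=\psi^2+1$. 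The characteristic polynomial is therefore $x^2-(\psi^2+1)x+\psi^2=(x-1)(x-\psi^2)$. Since $c\geq 2$ forces $\psi^2=e^{2i\pi/c}\neq 1$, the two eigenvalues $1$ and $\psi^2$ are distinct, so $\rho(u)$ is diagonalizable with eigenvalues that are $c$-th roots of unity, whence $\rho(u)^c=I$. With all defining relations verified, $\rho$ extends to the desired homomorphism $G\longrightarrow\mathrm{GL}_2(K)$. The delicate point, and the one that motivates the whole construction, is precisely this trace computation: the value of $qr$ is engineered to make $\mathrm{tr}\,\rho(u)$ equal $\psi^2+1$.
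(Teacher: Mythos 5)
Your proof is correct and complete: the paper itself gives no argument for this proposition, quoting it directly from Achar--Aubert, and your verification (the relations $s^a=t^b=1$ via triangularity and geometric sums, the braid-like relations via the scalar matrix $\theta\phi\psi\,\mathrm{Id}$, and $u^c=1$ via the trace computation $\mathrm{tr}\,\rho(u)=\psi^2+1$ forced by the choice of $qr$, giving characteristic polynomial $(x-1)(x-\psi^2)$ with distinct roots) is exactly the expected direct check of the defining relations. Nothing is missing, so there is no gap to flag.
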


The following example shows that $\rho$ is unfaithful in general when $W(k,n,m)$ is infinite. 

\begin{exple}\label{ex_unfaith}
Let $k=6, n=2, m=3$. By Theorem~\ref{part_I_main}, the group $W(6,2,3)$ can be identified with the subgroup of $G=J\begin{pmatrix} 6 & 2 & 3\\ ~& ~& ~\end{pmatrix}$ generated by $x_1=s$ and $x_2=tst^{-1}=tst$. Note that, thanks to Theorem~\ref{part_I_main}, it is a quotient of the $3$-strand braid group $$B_3=\langle \ x_1, x_2 ~|~x_1 x_2 x_1=x_2 x_1 x_2 \ \rangle$$ by the relations $x_1^6=1=x_2^6$, studied by Coxeter in~\cite{coxeter_factor}. By definition of $\rho(u)$ and since $stu=tus=ust$, we have that $\rho(s)\rho(t)\rho(u)$ is the scalar matrix $\theta\phi\psi \mathrm{Id}$, which in this case is $-\mathrm{Id}$. It implies that $\rho(stu)$ has order $2$. Now since $stu$ is central in $G$ and $u$ has order $6$, we have that $(stu)^6=(stuu^{-1})^6=(st)^6$, from which we deduce that $$\rho(x_1 x_2)^3=\rho(stst)^3=\rho((stu)^6)=\mathrm{Id}.$$ On the other hand, there is a surjective map $W(6,2,3)\twoheadrightarrow W(3,2,3)$, $x_i\mapsto x_i$. The group $W(3,2,3)$ is the complex reflection group $G_4$, and one checks that $x_1 x_2$ has order $6$ in this group. It follows that $(x_1 x_2)^3$ cannot be equal to $1$ in $W(6,2,3)$, and that Achar and Aubert's representation is unfaithful in this case. The same observation can be made with the representation constructed by Coxeter in~\cite[Section 7]{coxeter_factor}. 

Moreover, the isomorphism class of the representation depends in fact on the choice of $q$ and $r$. In the above case, we have $qr=0$. Chosing $q=0=r$ yields matrices $\rho(s)$ and $\rho(t)$ commuting with each other, while choosing $q=1, r=0$ yields matrices $\rho(s)$ and $\rho(t)$ which do not commute with each other. 
\end{exple}

The above example raises the following questions:

\begin{question}
Are there examples of infinite toric reflection groups for which Achar and Aubert's representation is faithful? For which toric reflection groups $W(k,n,m)$ is there a (canonical) faithful representation as a complex reflection group?
\end{question}

\section{Center of toric reflection groups}\label{center}

The aim of this section is to establish that toric reflection groups have a cyclic center, and to show that the quotient by their center is an alternating subgroup of a Coxeter group of rank three. This will be a key ingredient for the classification of toric reflection groups which we shall give in the next section. We assume the reader to be familiar with the general theory of Coxeter groups and their parabolic subgroups (see for instance~\cite{Bourbaki, AB} for basics on the topic).

Let us introduce some notation and properties of Coxeter groups which will be used in the next sections. We will denote by $(W,S)$ a Coxeter system, with $S$ finite. Recall that for $I\subseteq S$, the subgroup $W_I:=\langle s~|~s\in I \rangle$ is called a \defn{standard parabolic subgroup} of $W$, and that the pair $(W_I, I)$ is itself a Coxeter system. These subgroups have particularly nice properties: for instance, any $S$-reduced decomposition of an element $w\in W$ which lies in $W_I$ has all its letters in $I$---see~\cite[Section 2.3.2]{AB}. A subgroup of the form $x W_I x^{-1}$ of $W$, where $I\subseteq S$ and $x\in W$, is called a \defn{parabolic subgroup} of $W$. 

More generally, one can show that any subgroup $W'$ of $W$ generated by a subset of the set $T=\bigcup_{w\in W} w S w^{-1}$ of \defn{reflections} of $W$ is itself a Coxeter group in a canonical way---see~\cite{Dyer_reflections}. Such a subgroup is called a \defn{reflection subgroup} of $W$. 

Given a Coxeter group $W$ with length function $\ell_S$ with respect of the generating set $S$, the subset $W^+$ of elements $w$ such that $\ell_S(w)$ is even forms a subgroup of index two of $W$ (hence normal), called the \defn{alternating subgroup} of $W$, which we shall denote $W^+$. 

\subsection{Center of alternating subgroups of Coxeter groups}

We will need the following result, which applies to an arbitrary Coxeter group of rank at least $3$, though we will only apply it in the rank three case. Recall that the center of an infinite and irreducible Coxeter group is trivial (see~\cite[Chap. V, Sec. 4, Exercise 3]{Bourbaki}).

\begin{prop}[Center of alternating subgroups of Coxeter groups]\label{prop_2}
Let $(W,S)$ be a Coxeter system with $|S| \geq 3$. Let $W^+$ be the alternating subgroup of $W$. Then $Z(W^+)\subseteq Z(W)$. In particular, as infinite and irreducible Coxeter groups have trivial center, the center of the alternating subgroup of an infinite and irreducible Coxeter group of rank at least three is trivial.    
\end{prop}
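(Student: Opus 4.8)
The plan is to prove the inclusion $Z(W^+)\subseteq Z(W)$ directly, and then invoke the stated fact that infinite irreducible Coxeter groups have trivial center to obtain the ``in particular'' clause. Since $W^+$ has index two in $W$, it is enough to show that any $z\in Z(W^+)$ commutes with a single reflection $s\in S$ lying outside $W^+$; indeed $W=W^+\sqcup sW^+$, and once $z$ is shown to commute with $s$ and with all of $W^+$, it commutes with all of $W$. So the whole problem reduces to a commutation statement between a central element of $W^+$ and a standard generator.

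First I would set up the following observation: for $s,s'\in S$ distinct, the product $ss'$ lies in $W^+$ (it has even length), so $z$ commutes with every such $ss'$. The hypothesis $|S|\geq 3$ is exactly what guarantees that, for a fixed $s\in S$, there are at least two \emph{other} generators $s',s''$, which gives enough length-two elements $ss'$ to pin down how $z$ interacts with $s$. The key algebraic step I expect to use is this: if $z$ commutes with $ss'$ and with $ss''$ (both in $W^+$), then $z$ commutes with their ratio and more usefully with conjugates; the plan is to manipulate $z(ss')=ss'z$ and a companion relation to isolate $szs^{-1}$ and show it equals $z$. Concretely, from $z\,ss' = ss'\,z$ one gets $s^{-1}zs = s'\, z\, (s')^{-1}$ after rearranging (using $s^2=(s')^2=1$), so the element $szs$ depends on $s$ only through this conjugation relation; comparing the expression obtained from $s'$ with the one obtained from a third generator $s''$ forces $s'z(s')^{-1}=s''z(s'')^{-1}$, and combined back this should yield $szs=z$, i.e. $z$ commutes with $s$.

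I would carry the steps out in this order: (1) record $W=W^+\sqcup sW^+$ and reduce the goal to showing $z\in Z(W^+)$ commutes with one chosen $s\in S$; (2) use $|S|\geq 3$ to pick two further generators $s',s''$ and write down the commutation relations $z\cdot ss'=ss'\cdot z$ and $z\cdot ss''=ss''\cdot z$; (3) rearrange these (exploiting $s^2=1$ and that $z$ commutes with $s's''\in W^+$) to derive $szs=z$; (4) conclude $z$ commutes with every $s\in S$, hence $z\in Z(W)$. For the final assertion, if $(W,S)$ is infinite and irreducible of rank at least three, then $Z(W)=\{1\}$ by the cited exercise in Bourbaki, so $Z(W^+)\subseteq Z(W)=\{1\}$ is trivial.

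The main obstacle I anticipate is step (3): the naive manipulation yields a relation of the shape $s'z(s')^{-1}=s''z(s'')^{-1}$ rather than $szs=z$ immediately, so some care is needed to extract the commutation of $z$ with $s$ itself rather than merely a symmetry among the conjugates $szs,\,s'z(s')^{-1},\dots$. I expect the resolution to use that $z$ already commutes with the even-length products $s's''$ together with the derived equalities, allowing a cancellation that forces each conjugate $szs$ to coincide with $z$. One must also double-check the degenerate possibility that some of the generators appearing coincide or that relations collapse, but the rank at least three hypothesis is precisely designed to rule this out, giving always at least two generators distinct from the chosen $s$.
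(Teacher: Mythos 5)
Your reduction in step (1) is fine, and the rearrangements in steps (2)--(3) are correct as far as they go: from $z\,ss'=ss'\,z$ and $s^2=(s')^2=1$ you get $szs=s'zs'$, and with a third generator $szs=s'zs'=s''zs''=:\tilde z$. But this is where the argument genuinely stops: these identities, together with the commutation of $z$ with \emph{every} even-length word, are consistent with $\tilde z\neq z$, so no further cancellation of the kind you anticipate can force $szs=z$. Indeed, once you set $\tilde z:=szs$ (independent of the generator), the relations you have collapse into the single rule $zt=t\tilde z$ and $\tilde z t=tz$ for every generator $t$, and pushing $z$ through any even word by this rule returns $z$ at the far end---so every commutation relation with an element of $W^+$ that you could still invoke is automatically satisfied whether or not $\tilde z=z$. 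A concrete witness that steps (1)--(3) cannot suffice: let $G=\langle z\rangle\rtimes\langle\sigma\rangle$ be infinite dihedral with $\sigma z\sigma=z^{-1}$, and take the three involutions $s=\sigma$, $s'=z\sigma$, $s''=z^2\sigma$. They generate $G$, the subgroup of even words in them is exactly $\langle z\rangle$, in which $z$ is central, and all of your derived identities hold with $\tilde z=z^{-1}\neq z$. Of course this $(G,\{s,s',s''\})$ is not a rank-three Coxeter system, but your manipulations use nothing beyond ``three involutions whose even words commute with $z$,'' so they are valid in this model and therefore cannot prove $szs=z$. In particular your diagnosis of the hypothesis is off: $|S|\geq 3$ is not there to guarantee a supply of distinct generators for the cancellation, and merely having three of them buys nothing at this formal level.

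What is missing is Coxeter-theoretic rigidity, and the paper's proof shows exactly where it enters: instead of conjugating $z$ by the generators, conjugate the generators by $z$. For $x\in Z(W^+)$ and $s\neq t$ in $S$, the elements $s'=xsx^{-1}$ and $t'=xtx^{-1}$ are \emph{reflections}, and centrality gives $st=s't'$. By a lemma of Dyer (\cite[Lemma 3.1]{Dyer_Bruhat}) the reflection subgroup $\langle s,t,s',t'\rangle$ is dihedral; since $s,t\in S$ lie in its set of canonical Coxeter generators, it must equal the standard parabolic subgroup $\langle s,t\rangle$, so $s'\in\langle s,t\rangle$, and if $s'\neq s$ then every reduced $S$-word for $s'$ contains the letter $t$. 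Running the same argument with a third generator $r\in S\setminus\{s,t\}$ gives $s'\in\langle s,r\rangle$, whose elements have reduced words involving only $s$ and $r$---a contradiction, whence $s'=s$ for all $s\in S$ and $x\in Z(W)$. This is the real role of the rank hypothesis: it allows two dihedral standard parabolics to be intersected so as to pin the reflection $xsx^{-1}$ down to $s$ itself. To salvage your plan you would need to import comparable structure (reflections, the reduced-word support property of standard parabolic subgroups, Dyer's theory of reflection subgroups); the elementary cancellations alone are provably insufficient, as the dihedral model above shows.
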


\begin{proof}
Let $x\in Z(W^+)$. Let $T$ denote the set $\bigcup_{w\in W} w S w^{-1}$ of reflections of $W$. For every $s\in S$, define $s':=xsx^{-1}\in T$. We will show that $s'=s$ for all $s\in S$, which concludes the proof. 

Let $s,t\in S$ with $s\neq t$. As $x$ is central in $W^+$, we have $xst=stx$. We also have $xst=s't'x$, which yields $st=s't'$. By~\cite[Lemma 3.1]{Dyer_Bruhat}, the reflection subgroup $W':=\langle s,t,s',t'\rangle$ of $W$ is dihedral. We claim that $W$ is the standard parabolic subgroup generated by $s$ and $t$. Indeed, writing $\chi(W')$ for its set of canonical generators as a Coxeter group (see~\cite{Dyer_reflections}), we have $|\chi(W')|=2$ and $s,t\in\chi(W')$ since $s,t\in S$, and hence $W'=\langle s, t\rangle$. In particular, we have $s',t'\in \langle s, t\rangle$. Assume that $s'\neq s$. Then since $s'\in\langle s,t\rangle$ and $\langle s, t\rangle$ is a standard parabolic subgroup of $W$, the reflection $s'$ has a reduced $S$-decomposition (equivalently all reduced $S$-decompositions) in which $t$ has to appear. Now, since $W$ has rank at least $3$, let $r\in S\backslash\{s,t\}$. Arguing as above we have $sr=s'r'$, and $s'\in\langle s,r\rangle$. This is a contradiction, as $t$ appears in reduced $S$-decompositions of $s'$, while all elements in $\langle s, r\rangle$ have all their $S$-reduced decompositions only involving the letters $s$ and $r$. Hence $s=s'$.  
\end{proof}

The above result is not valid for irreducible Coxeter groups of rank two: let $W=I_2(m)$ be the dihedral group of order $2m$ ($m\geq 3$). Then $W^+\cong C_m$, the cyclic group of order $m$. It is the rotation subgroup of $W$. Hence $W^+=Z(W^+)$, while $Z(W)$ is trivial for odd $m$ and isomorphic to $C_2$ for even $m$. The infinite dihedral group $W$ has trivial center, while $W^+\cong\mathbb{Z}$.  

Also note that it has been shown that for irreducible, infinite and non-affine Coxeter groups, the center of any finite index subgroup is trivial (see~\cite[Proposition 6.4]{Paris_irred} or~\cite{D_Q}). 

\subsection{Center of toric reflection groups and their parent $J$-groups}

Let $k,n,m\geq 2$ with $n<m$ and $n$ and $m$ coprime. The aim of this section is to show that the center of a toric reflection group is cyclic. A step to achieve this is to show that $W(k,n,m)/ Z(W(k,n,m))$ is isomorphic to the alternating subgroup $W_{k,n,m}^+$ of the rank-three Coxeter group  
\begin{align}\label{cox_3}
W_{k,n,m}=\bigg\langle r_1,r_2,r_3 \ \bigg\vert\ 
\begin{matrix}
r_1^2=r_2^2=r_3^2=1,\\
(r_1 r_2)^k=(r_2 r_3)^n=(r_3 r_1)^m=1
\end{matrix}
\ \bigg\rangle
\end{align}

A presentation for the alternating subgroup $W^+$ of an arbitrary Coxeter group $W$ is given in Bourbaki~\cite[Chap. IV, Sec. 1, Exercise 9]{Bourbaki}. In the specific case of $W_{k,n,m}$ it yields the presentation 
\begin{align}\label{alt_bou}
W_{k,n,m}^+=\bigg\langle a,b \ \bigg\vert\ 
\begin{matrix}
a^k=b^n=(ba^{-1})^m=1
\end{matrix}
\ \bigg\rangle,
\end{align}
where in terms of the generators $r_1, r_2, r_3$ of $W_{k,n,m}$ we have $a=r_1 r_2$, $b=r_3 r_2$ (hence $ba^{-1}=r_3 r_1$). 

\begin{lemma}\label{delta}
Let $m=qn+r$ be the Euclidean division of $m$ by $n$. Let $\delta:=x_1 x_2 \cdots x_m$. Then, taking indices modulo $n$, we have $x_i \delta = \delta x_{i+r}$ for all $i=1, \dots, n$.  
\end{lemma}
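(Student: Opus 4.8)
The plan is to exploit the fact that the second family of defining relations of $W(k,n,m)$ says precisely that $\delta$ can be written as the product of any $m$ cyclically consecutive generators. Reading the relations
$$x_1 x_2 \cdots x_m = x_2 x_3 \cdots x_{m+1} = \dots = x_n x_1 \cdots x_{n+m-1}$$
together with the convention that indices are taken modulo $n$, one obtains that for every $j\in\mathbb{Z}$,
$$\delta = x_j x_{j+1} \cdots x_{j+m-1},$$
i.e. the product of the $m$ consecutive generators starting at index $j$.

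With this in hand I would compute $x_i\delta$ directly. Writing $\delta$ in the form that starts at index $i+1$, namely $\delta = x_{i+1} x_{i+2} \cdots x_{i+m}$, gives
$$x_i \delta = x_i x_{i+1} x_{i+2} \cdots x_{i+m} = \bigl(x_i x_{i+1} \cdots x_{i+m-1}\bigr)\, x_{i+m}.$$
The parenthesised factor is the product of $m$ consecutive generators starting at index $i$, hence equals $\delta$, so that $x_i \delta = \delta\, x_{i+m}$.

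Finally, since $m = qn + r$ and indices are read modulo $n$, we have $x_{i+m} = x_{i+m-qn} = x_{i+r}$, which yields $x_i \delta = \delta\, x_{i+r}$, as claimed. There is essentially no obstacle here: the only point requiring (minor) care is to record that the cyclic form of the relations holds for the wrap-around starting indices as well, which is immediate from the presentation and the periodicity convention. Once that is noted, the lemma follows from a single regrouping of factors.
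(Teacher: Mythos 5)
Your proof is correct and is essentially identical to the paper's: both rewrite $\delta$ as the product of $m$ consecutive generators starting at index $i+1$, regroup to recognize $x_i x_{i+1}\cdots x_{i+m-1}=\delta$, and reduce the index $i+m$ modulo $n$ to $i+r$. Your explicit remark that the cyclic relations hold for all starting indices modulo $n$ is a point the paper leaves implicit, but the argument is the same.
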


\begin{proof}
As $m\equiv r ~(\mathrm{mod}~n)$, using the defining relations of $W(k,n,m)$ we have for all $i=1, \dots, n$ $$x_i (x_1 x_2 x_3\cdots x_m)=x_i (x_{i+1} x_{i+2}\cdots x_{m+i})=(x_i x_{i+1} \cdots x_{m+i-1}) \underbrace{x_{m+i}}_{=x_{i+r}}=(x_1 x_2\cdots x_m) x_{i+r}.$$
\end{proof}

Let $c:=(x_1 x_2 \cdots x_n)^m\in W(k,n,m)$. Note that we have $(x_1 x_2 \cdots x_n)^m=(x_1 x_2 \cdots x_m)^n$. Moreover, by Lemma~\ref{delta}, the element $c$ is central in $W(k,n,m)$ as $$x_i c= x_i \delta^n=\delta^n x_{i+nr}=\delta^n x_i.$$ We denote by $\overline{W(k,n,m)}$ the quotient of $W(k,n,m)$ by the extra relation $c=1$.

We clearly have a group homomorphism $J\begin{pmatrix} k & n & m \\ ~ & ~ & ~ \end{pmatrix}\longrightarrow W_{k,n,m}^+$ with $s\mapsto r_1 r_2$, $t\mapsto r_2 r_3$, $u\mapsto r_3 r_1$. By restriction it induces a homomorphism $W(k,n,m)\longrightarrow W_{k,n,m}^+$. Considering the generators $x_i$ of $W(k,n,m)\cong J\begin{pmatrix} k & n & m\\ ~& n& m\end{pmatrix}$ and recalling from Theorem~\ref{part_I_main} that in terms of the generators of the parent $J$-group $J\begin{pmatrix} k & n & m\\ ~& ~& ~\end{pmatrix}$ we have $x_i=t^{i-1} s t^{1-i}$ for all $i=1, \dots, n$, we see that such a homomorphism maps $x_1 x_2 \cdots x_n$ to $(r_1 r_3)^n$, which has order $m$ in $W_{k,n,m}$: indeed, by the general theory of Coxeter groups we know that $r_1 r_3$ has order $m$ (see for instance~\cite[Section 2.3.3]{AB}), and $n$ and $m$ are coprime. In particular, the element $c$ is mapped to $1$, hence the above homomorphism factors through $\overline{W(k,n,m)}$, and we denote by $\varphi : \overline{W(k,n,m)}\longrightarrow W_{k,n,m}^+$ the obtained homomorphism.

The main result of the section is given by the following statement

\begin{theorem}[Center of toric reflection groups]\label{thm_alt}
Let $G=J\begin{pmatrix} k & n & m\\ ~& ~& ~\end{pmatrix}$, where, $k,n,m\geq 2$, $n<m$, and $n,m$ are coprime. 

\begin{enumerate} \item The map $\varphi$ is an isomorphism. \item We have the following commutative diagram, where both rows are short exact sequences

\smallskip 
\begin{tikzcd}
1 \arrow[r] & \langle stu \rangle\arrow[r] & G\arrow[r] & W_{k,n,m}^+ \arrow[r] & 1 \\
1 \arrow[r] & \langle c \rangle \arrow[r] \arrow[u, hook] & W(k,n,m)\arrow[r] \arrow[u, hook] & W_{k,n,m}^+\arrow[r] \arrow[u, "\mathrm{id}"] & 1 
\end{tikzcd}
\smallskip

\item We have $Z(W(k,n,m))=\langle c \rangle$ and $Z(G)=\langle stu \rangle$.

\item The above commutative diagram induces isomorphisms $$G/Z(G)\cong W(k,n,m)/ Z(W(k,n,m))=\overline{W(k,n,m)}\cong W_{k,n,m}^+.$$
\end{enumerate}
\end{theorem}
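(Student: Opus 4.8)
The plan is to establish both short exact sequences of part~(2) first, read off parts~(1) and~(2) from them, and only then determine the two centers using Proposition~\ref{prop_2}; part~(4) is then purely formal. I would begin with the parent group $G$ and the surjection $\pi\colon G\to W_{k,n,m}^+$, $s\mapsto r_1r_2$, $t\mapsto r_2r_3$, $u\mapsto r_3r_1$ constructed before the theorem (it is surjective because $r_1r_2$ and $r_2r_3$ generate $W_{k,n,m}^+$). To compute $\ker\pi$ I would pass to $G/\langle stu\rangle$: since $stu$ is central, setting $stu=1$ lets one eliminate $u=t^{-1}s^{-1}$, turning $u^m=1$ into $(st)^m=1$, so that $G/\langle stu\rangle=\langle s,t\mid s^k=t^n=(st)^m=1\rangle$. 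After relabelling $a=s$, $b=t^{-1}$ this is exactly the von Dyck presentation~\eqref{alt_bou} of $W_{k,n,m}^+$, and $\pi$ induces the corresponding isomorphism; hence $\ker\pi=\langle stu\rangle$ and the top row is exact.

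Next I would transfer everything to $W(k,n,m)$. A short telescoping computation with $x_i=t^{i-1}st^{1-i}$ and $t^n=1$ gives $x_1\cdots x_n=(st)^n$ in $G$, and from $u^m=1$ and the centrality of $stu$ one gets $(st)^m=(stu)^m$; together these yield $c=(stu)^{nm}$, so $\langle c\rangle\subseteq\langle stu\rangle$. That this is the whole intersection follows from the abelian quotient $G/W(k,n,m)\cong\mathbb{Z}/n\times\mathbb{Z}/m$, in which $stu$ has image $(\overline1,\overline1)$: since $n$ and $m$ are coprime this image has order $nm$, so $(stu)^j\in W(k,n,m)$ iff $nm\mid j$, giving $\langle stu\rangle\cap W(k,n,m)=\langle c\rangle$. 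Consequently $\ker(\pi|_{W(k,n,m)})=W(k,n,m)\cap\ker\pi=\langle c\rangle$, so the induced map $\varphi$ on $\overline{W(k,n,m)}=W(k,n,m)/\langle c\rangle$ is injective. For surjectivity I would observe that $(\overline1,\overline1)$ generates the cyclic group $G/W(k,n,m)$, whence $W(k,n,m)\cdot\langle stu\rangle=G$; applying $\pi$ and using $\langle stu\rangle=\ker\pi$ yields $\pi(W(k,n,m))=\pi(G)=W_{k,n,m}^+$. This proves~(1) and gives exactness of the bottom row, and commutativity of the diagram in~(2) is immediate since the vertical maps are the inclusions $\langle c\rangle\hookrightarrow\langle stu\rangle$, $W(k,n,m)\hookrightarrow G$ and the identity.

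For part~(3) I would use that both $\pi$ and $\pi|_{W(k,n,m)}$ are surjective, so that central elements map to central elements: it therefore suffices to prove $Z(W_{k,n,m}^+)=1$. Proposition~\ref{prop_2} gives $Z(W_{k,n,m}^+)\subseteq Z(W_{k,n,m})\cap W_{k,n,m}^+$. If $W_{k,n,m}$ is infinite it is irreducible---a triangle Coxeter group is reducible only when at least two of its three labels equal $2$, which under $n<m$ with $\gcd(n,m)=1$ forces $k=n=2$ and hence the finite family $(2,2,m)$---so $Z(W_{k,n,m})=1$. The finitely many finite cases (where $W_{k,n,m}$ is of type $A_3$, $B_3$, $H_3$, or is $\mathbb{Z}/2\times I_2(m)$ with $m$ odd) are dispatched by a direct check: in each of them every nontrivial central element of $W_{k,n,m}$ has odd length and so lies outside $W_{k,n,m}^+$. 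Thus $Z(W_{k,n,m}^+)=1$ in all cases, giving $Z(G)=\langle stu\rangle$ and $Z(W(k,n,m))=\langle c\rangle$. Finally, part~(4) is obtained by combining the two exact sequences with these identifications of the kernels as the centers.

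The main obstacle is part~(3): the inclusions $\langle stu\rangle\subseteq Z(G)$ and $\langle c\rangle\subseteq Z(W(k,n,m))$ are elementary, but the reverse inclusions carry all the content, and this is precisely where Proposition~\ref{prop_2} is indispensable. The only genuinely case-dependent point is the triviality of $Z(W_{k,n,m}^+)$ for the finitely many spherical triangle groups; everything else is uniform and relies crucially on the coprimality of $n$ and $m$, both for the surjectivity of $\varphi$ and for the identification $\langle stu\rangle\cap W(k,n,m)=\langle c\rangle$.
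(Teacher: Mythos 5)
Your proof is correct, but it reverses the logical order of the paper's argument and thereby replaces its main computation with a structural one. The paper proves part~(1) first (Proposition~\ref{prop_1}) by explicitly constructing the inverse $\psi$ of $\varphi$ (sending $a\mapsto x_1$ and $b\mapsto(x_1\cdots x_m)^\ell$ with $\ell r\equiv 1\bmod n$) and verifying the defining relations of Presentation~\eqref{alt_bou} through delicate word manipulations inside $W(k,n,m)$, using Lemma~\ref{delta} and an induction on telescoping products $(\delta^{\ell-1}x_2\cdots x_m)^i$; exactness of the bottom row then comes for free, and the top row is checked afterwards via the von Dyck presentation, exactly as you do. You instead establish the top row first ($\ker\pi=\langle stu\rangle$ via $G/\langle stu\rangle\cong\langle s,t\mid s^k=t^n=(st)^m=1\rangle$) and then deduce the bottom row, and hence part~(1), abstractly: from $c=(stu)^{nm}$ (the same identity the paper verifies, with the same telescoping $x_1\cdots x_n=(st)^n$), the intersection computation $\langle stu\rangle\cap W(k,n,m)=\langle c\rangle$ read off from the image $(\overline{1},\overline{1})$ of $stu$ in $G/W(k,n,m)\cong\mathbb{Z}/n\mathbb{Z}\times\mathbb{Z}/m\mathbb{Z}$, and the product decomposition $G=W(k,n,m)\cdot\langle stu\rangle$, both of which exploit coprimality of $n$ and $m$ just as the paper's $\psi$ does. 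Your route is shorter and avoids Proposition~\ref{prop_1}'s inductions entirely, at the modest cost of not producing an explicit formula for the inverse isomorphism (which the paper's construction supplies and which underlies its byproduct presentation of $W_{k,n,m}^+$). Parts~(3) and~(4) coincide with the paper's treatment: Proposition~\ref{prop_2} plus the reducibility analysis (reducible only for $(2,2,m)$, which is finite) and the finite case check, where your uniform observation that nontrivial central elements have odd length ($\ell(w_0)=9$ in $B_3$, $15$ in $H_3$, a simple reflection in $A_1\times I_2(m)$) is a clean rephrasing of the paper's case-by-case verification that these elements lie outside the alternating subgroup.
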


We will split the proof of Theorem~\ref{thm_alt} into several statements.

\begin{rmq}
As a byproduct, in the cases where $H=W(k,n,m)$ is finite, we recover from Theorem~\ref{thm_alt} the known (see for instance the tables at the end of~\cite{Broue}) description of $H/Z(H)$ given in Table~\ref{table_gzg}. Note that for the primitive groups, \emph{i.e.}, all the groups in Table~\ref{table_gzg} except dihedral groups, there are three possible groups $H/Z(H)$: this is a well-known fact, as these groups are of three types, called \textit{tetrahedral} ($G_4$), \textit{octahedral} ($G_8, G_{12}$), and \textit{icosahedral} ($G_{16}, G_{20}, G_{22}$), depending on whether $H$ is a subgroup of the tetrahedral group $\mathcal{T}$, octahedral group $\mathcal{O}$, or icosahedral group $\mathcal{I}$. If $H$ is of type $\mathcal{G}$ for $\mathcal{G}\in\{\mathcal{T}, \mathcal{O}, \mathcal{I}\}$, then $H/Z(H)\cong \mathcal{G}/ Z(\mathcal{G})$ (see~\cite[Chapter 6]{LT} for a detailed explanation of this phenomenon). Theorem~\ref{thm_alt} gives a more general, new description of $H/Z(H)$ by showing that it is an alternating subgroup of a Coxeter group of rank three which can be attached in a uniform way to all the concerned finite complex reflection groups. 
\begin{table}[h!]
\begin{center}
\begin{tabular}{|c|c|c|c|c|}
\hline
$k$ & $n$ & $m$ & ${W}(k,n,m)$ & $W(k,n,m)/Z(W(k,n,m))$ \\
\thickhline
$2$ & $3$ & $4$ & $G_{12}$ & $W_{2,3,4}^+=W(B_3)^+\cong\mathfrak{S}_4$\\
\hline
$2$ & $3$ & $5$ & $G_{22}$ & $W_{2,3,5}^+=W(H_3)^+\cong\mathfrak{A}_5$\\
\hline
$3$ & $2$ & $3$ & $G_{4}$ & $W_{3,2,3}^+=W(A_3)^+\cong\mathfrak{A}_4$\\
\hline
$4$ & $2$ & $3$ & $G_{8}$ & $W_{4,2,3}^+=W(B_3)^+\cong\mathfrak{S}_4$\\
\hline
$5$ & $2$ & $3$ & $G_{16}$ & $W_{5,2,3}^+=W(H_3)^+\cong\mathfrak{A}_5$ \\
\hline
$3$ & $2$ & $5$ & $G_{20}$ & $W_{3,2,5}^+=W(H_3)^+\cong \mathfrak{A}_5$ \\
\hline
$2$ & $2$ & $\geq 3$ and odd & $G(m,m,2)=I_2(m)$ & $W_{2,2,m}^+= W(A_1\times I_2(m))^+=G(m,m,2)$\\
\hline
\end{tabular}
\end{center}
\caption{Quotient of a finite toric reflection group by its center.\label{table_gzg}}

\end{table}

\end{rmq}

\begin{rmq} As another byproduct, Theorem~\ref{thm_alt} also gives new presentations for alternating subgroups of Coxeter groups of rank three whose Coxeter diagrams have two edges with coprime labels. That is, for $k,n,m\geq 2$, $n<m$, and $n,m$ are coprime, we have \begin{align*} 
W_{k,n,m}^+=\bigg\langle x_1,x_2,\dots, x_n \ \bigg\vert\ 
\begin{matrix}
x_i^k=1~\forall i=1, \dots, n,\\ x_1 x_2 \cdots x_m = x_i x_{i+1} \cdots x_{i+m-1}, ~\forall i=2, \dots, n, \\ (x_1 x_2 \cdots x_n)^m=1.
\end{matrix}
\ \bigg\rangle
\end{align*}
\end{rmq}

The following statement proves the first point of Theorem~\ref{thm_alt}:

\begin{prop}\label{prop_1}
The map $\varphi$ is an isomorphism. 
\end{prop}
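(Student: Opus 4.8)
The plan is to realize $\varphi$ as the map induced by the homomorphism $\Phi\colon G=J\begin{pmatrix} k & n & m\\ ~& ~& ~\end{pmatrix}\to W_{k,n,m}^+$, $s\mapsto r_1r_2=a$, $t\mapsto r_2r_3=b^{-1}$, $u\mapsto r_3r_1$, by restricting it to $H=W(k,n,m)=J\begin{pmatrix} k & n & m\\ ~& n& m\end{pmatrix}$ and passing to the quotient by $\langle c\rangle$. Since $\varphi$ is already known to be well defined, it suffices to compute the image and the kernel of $\Phi\vert_H$. First I would pin down $\ker\Phi$ exactly.

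For the kernel of $\Phi$, note that $\Phi(stu)=r_1r_2\,r_2r_3\,r_3r_1=1$, so $stu\in\ker\Phi$, and since $stu$ is central $\langle stu\rangle$ is normal. I would then compute $G/\langle stu\rangle$ by Tietze transformations: adding $stu=1$ lets one eliminate $u=(st)^{-1}$, and the relations $stu=tus=ust$ become vacuous, leaving $\langle s,t\mid s^k=t^n=(st)^m=1\rangle$. This matches the presentation $\langle a,b\mid a^k=b^n=(ba^{-1})^m=1\rangle$ of $W_{k,n,m}^+$ via $s\mapsto a$, $t\mapsto b^{-1}$: indeed $(st)^m\mapsto (ab^{-1})^m=((ba^{-1})^m)^{-1}=1$, and the inverse assignment $a\mapsto s$, $b\mapsto t^{-1}$ is well defined because $t^{-1}s^{-1}=(st)^{-1}=u$ in the quotient, so $(ba^{-1})^m\mapsto u^m=1$. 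As the induced map $G/\langle stu\rangle\to W_{k,n,m}^+$ agrees with $\Phi$ on generators, I conclude $\ker\Phi=\langle stu\rangle$.

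It remains to restrict to $H=\langle\langle s\rangle\rangle_G$ (the normal closure of $s$, since $t^n=u^m=1$ already hold in $G$). For surjectivity, $\Phi(H)=\langle\langle a\rangle\rangle_{W^+}$; but modulo the normal closure of $a$ the relations $b^n=1$ and $(ba^{-1})^m=b^m=1$ force $b^{\gcd(n,m)}=b=1$, so by coprimality $\langle\langle a\rangle\rangle=W_{k,n,m}^+$ and $\Phi\vert_H$ is onto. For the kernel, $\ker(\Phi\vert_H)=H\cap\langle stu\rangle$. Using $G/H\cong\mathbb{Z}/n\mathbb{Z}\times\mathbb{Z}/m\mathbb{Z}$ with $s\mapsto(\overline0,\overline0)$, $t\mapsto(\overline1,\overline0)$, $u\mapsto(\overline0,\overline1)$, the image of $stu$ is $(\overline1,\overline1)$, of order $\mathrm{lcm}(n,m)=nm$; hence $(stu)^j\in H$ iff $nm\mid j$, i.e.\ $H\cap\langle stu\rangle=\langle (stu)^{nm}\rangle$. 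Finally I would identify this with $\langle c\rangle$ through the computation $c=(stu)^{nm}$: the telescoping identity $x_1x_2\cdots x_n=\prod_{i=0}^{n-1}t^i s t^{-i}=(st)^{n-1}st^{-(n-1)}=(st)^n$ (using $t^n=1$) together with $st=(stu)u^{-1}$ gives $x_1\cdots x_n=(stu)^n u^{-n}$, whence $c=(x_1\cdots x_n)^m=(stu)^{nm}u^{-nm}=(stu)^{nm}$ since $u^m=1$ and $stu$ is central. Thus $\ker(\Phi\vert_H)=\langle c\rangle$, and $\varphi\colon\overline{W(k,n,m)}=H/\langle c\rangle\to W_{k,n,m}^+$ is an isomorphism.

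The main obstacle is the explicit bookkeeping identifying $\langle c\rangle$ inside $\langle stu\rangle$, namely establishing $c=(stu)^{nm}$ and the order of $stu$ modulo $H$; everything else is a routine Tietze/coprimality argument. An alternative would be to build an inverse $\psi\colon W_{k,n,m}^+\to\overline{W(k,n,m)}$ directly, sending $a\mapsto \bar x_1$ and $b\mapsto\bar\delta^{m'}$ with $\delta=x_1\cdots x_m$ and $mm'\equiv1\ (\mathrm{mod}\ n)$; then $\psi\circ\varphi=\mathrm{id}$ follows from Lemma~\ref{delta} (conjugation by $\delta$ shifts indices by $r$, and $m'r\equiv1$), and the only delicate point is checking the relation $(\psi(b)\psi(a)^{-1})^m=1$ in $\overline{W(k,n,m)}$, which is exactly the counterpart of the kernel identity above.
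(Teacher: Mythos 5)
Your proof is correct, but it takes a genuinely different route from the paper. The paper proves Proposition~\ref{prop_1} directly inside $W(k,n,m)$: it computes $\varphi(x_1\cdots x_m)=b^r$ (with $m=qn+r$) to get surjectivity, then constructs an explicit inverse $\psi$ with $a\mapsto x_1$, $b\mapsto\delta^{\ell}$ where $\delta=x_1\cdots x_m$ and $\ell r\equiv 1 \pmod n$, verifying the relation $(\psi(b)\psi(a)^{-1})^m=1$ by an inductive word computation based on Lemma~\ref{delta}. You instead work in the parent group $G$: you identify $\ker\Phi=\langle stu\rangle$ by the Tietze argument showing $G/\langle stu\rangle\cong\langle s,t\mid s^k=t^n=(st)^m=1\rangle\cong W_{k,n,m}^+$, get surjectivity of $\Phi\vert_H$ from $\Phi(\langle\langle s\rangle\rangle_G)=\langle\langle a\rangle\rangle$ and coprimality, compute $H\cap\langle stu\rangle=\langle(stu)^{nm}\rangle$ via the abelian quotient $G/H\cong\mathbb{Z}/n\mathbb{Z}\times\mathbb{Z}/m\mathbb{Z}$, and identify $(stu)^{nm}=c$ through $x_1\cdots x_n=(st)^n$. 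All of these steps are sound (the order-$nm$ computation and the image of the normal closure under a surjection are both correct), and interestingly they are the very ingredients the paper deploys \emph{later}, in the proof of Theorem~\ref{thm_alt}, where the exactness of the top row is established by exactly your presentation argument and where the identity $c=(stu)^{nm}$ appears verbatim. In effect you prove the top row of the commutative diagram first and deduce the bottom row (Proposition~\ref{prop_1}) by intersecting with $H$, reversing the paper's logical order; this buys you a softer proof that avoids the paper's delicate induction (the verification that $\psi$ respects $(ba^{-1})^m=1$), at the cost of leaning more heavily on the embedding $W(k,n,m)\cong H\trianglelefteq G$ from Theorem~\ref{part_I_main}, whereas the paper's explicit $\psi$ yields a concrete inverse formula internal to $W(k,n,m)$. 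Your sketched alternative inverse ($b\mapsto\bar\delta^{m'}$ with $mm'\equiv 1\pmod n$) is essentially the paper's $\psi$, since $\varphi(\delta)=b^r$ makes the paper's exponent $\ell$ an inverse of $r\equiv m\pmod n$.
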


\begin{proof}
We still denote the images of the generators $x_i$ of $W(k,n,m)$ in the quotient $\overline{W(k,n,m)}$ by $x_i$. In terms of Presentation~\eqref{alt_bou}, the map $\varphi$ sends $x_i$ to $b^{-i+1} a b^{i-1}$ for all $i=1, 2, \dots, n$. Writing again $m=qn+r$ for the Euclidean division of $m$ by $n$, let us observe for later use that \begin{align}\label{phidelta} \varphi(x_1 x_2\cdots x_m)&=\varphi(x_1 x_2 \cdots x_n)^q \varphi(x_1 x_2 \cdots x_r)\\&=(ab^{-1})^q(ab^{-1})^{r-1} a b^{r-1}=(ab^{-1})^m b^r=b^r.
\end{align}
In particular, as $b$ has order $n$ and $r$ and $n$ are coprime, the map $\varphi$ is surjective as both $a$ and $b$ are in the image of $\varphi$.  

Let us construct the inverse $\psi$ of $\varphi$. To this end, let $\ell\geq 1$ be the smallest positive integer such that $(b^r)^\ell=b$. Again, as $n$ and $r$ are coprime, such an integer must exist. We define $\psi$ on generators by $a\mapsto x_1$, $b\mapsto (x_1 x_2\cdots x_m)^\ell$. Let us check that these images satisfy the defining relations of Presentation~\ref{alt_bou}. We have $\psi(a)^k=x_1^k=1$ and $\psi(b)^n=((x_1 x_2\cdots x_m)^\ell)^n=(\underbrace{(x_1 x_2\cdots x_m)^n}_{=1})^\ell=1$. Now we have, taking indices modulo $n$ when necessary \begin{align*}
(\psi(b)\psi(a^{-1}))^m&=((x_1 x_2\cdots x_m)^\ell x_1^{-1})^m=((x_2 x_3\cdots x_m x_1)^{\ell-1} x_2 x_3\cdots x_m)^m.
\end{align*}  
Writing $\delta=x_1 x_2\cdots x_m$, we have $x_i \delta=\delta x_{i+r}$ for all $i=1, \dots, n$ by Lemma~\ref{delta}. Using this relation, one checks by induction on $i$ using the fact that $\ell r\equiv 1~(\mathrm{mod}~n)$ that for $i\geq 1$, we have $$(\delta^{\ell-1} x_2 x_3 \cdots x_m)^i=\delta^{i(\ell-1)} x_{m-(m-1)i+1} x_{m-(m-1)i+2}\cdots x_{m-1} x_m.$$
For $i=m$ the product $x_{m-(m-1)m+1} x_{m-(m-1)m+2}\cdots x_{m-1} x_m$ has $m(m-1)$ factors and since two consecutive factors have consecutive indices, using the defining relations $x_1 x_2 \cdots x_m=x_i x_{i+1} \cdots x_{i+m-1}$ ($i\geq 2$) we get that this product is equal to $\delta^{m-1}$. Hence $$(\psi(b)\psi(a^{-1}))^m=(\delta^{\ell-1}x_2 x_3\cdots x_m)^m=\delta^{m(\ell-1)+m-1}.$$ Now, using that $\ell r \equiv 1~(\mathrm{mod}~n)$, we get that $m(\ell-1)+m-1\equiv 0~(\mathrm{mod}~n)$, hence that $(\psi(b)\psi(a^{-1}))^m=1$ as $\delta^n=1$. This shows that $\psi$ is also a group homomorphism. 

It remains to show that $\varphi$ and $\psi$ are inverse to each other. We have $\varphi\circ\psi=\mathrm{id}$ as $a\mapsto x_1\mapsto a$ and using~\eqref{phidelta} $b\mapsto (x_1 x_2\cdots x_m)^\ell \mapsto (b^r)^\ell=b$. Conversely, the map $\psi\circ \varphi$ maps $x_i\mapsto b^{-i+1} a b^{i-1}\mapsto\delta^{(-i+1)\ell} x_1 \delta^{(i-1)\ell}$ ($i=1,\dots, n$). By Lemma~\ref{delta} we have $$x_1 \delta^{(i-1)\ell}=\delta^{(i-1)\ell} x_{1+(i-1)\ell r}$$ and as $\ell r\equiv 1 (\mathrm{mod}~ n)$, we have $x_{1+(i-1)\ell r}=x_i$, which concludes the proof. 
\end{proof}

\begin{proof}[Proof of Theorem~\ref{thm_alt}]
The isomorphism $\overline{W(k,n,m)}\cong W_{k,n,m}^+$ claimed in the first point is shown in Proposition~\ref{prop_1} above. This also establishes that the short exact sequence in the second row of the diagram of the second point is exact---this also shows that the map $G\longrightarrow W_{k,n,m}^+$ is surjective. To show that the first row is exact, it suffices to see that the presentation $$\langle s, t, u~|~s^k=t^n=u^m=1, stu=1 \rangle$$ is a presentation of $W_{k,n,m}^+$ via $s\mapsto r_1 r_2=a$, $t\mapsto r_2 r_3=b^{-1}$, $u\mapsto r_3 r_1=b a^{-1}$. But this exactly yields Presentation~\ref{alt_bou}. Now the commutativity of the diagram is clear: the commutativity of the square of the right is clear, and since the first row is exact, the kernel $\langle c \rangle$ of $W(k,n,m)\longrightarrow W_{k,n,m}^+$ has to be included in the kernel $\langle stu \rangle$ of the first short exact sequence; this can also be seen explicitly using the fact that $stu$ is central in $G$, as $$c=(x_1 x_2 \cdots x_n)^m=(st)^{nm}=(stu u^{-1})^{nm}=(stu)^{nm} (u^m)^{-n}=(stu)^{nm}.$$ 
This establishes the second point. Now by Proposition~\ref{prop_2}, the group $W_{k,n,m}^+$ has trivial center except possibly in the cases where $W_{k,n,m}$ is finite or not irreducible, that is, for $(k,n,m)=(2,2,m)$ with $m$ odd, $(2,3,4)$, $(2,3,5)$, $(3,2,3)$, $(3,2,5)$, $(4,2,3)$, $(5,2,3)$. In all cases we get finite Coxeter groups of types $A_1\times I_2(m)$ ($m$ odd), $A_3$, $B_3$ and $H_3$. The center of $A_3$ is trivial, and the center of both $H_3$ and $B_3$ has order two, generated by the longest element $w_0$, which is not in the alternating subgroup. The center of $A_1\times I_2(m)$ is also of order two (it is the $A_1$ component) since $m$ is odd, generated by a simple reflection, hence its non-trivial element is not in the alternating subgroup. We thus have seen that in all possible cases, the group $W_{k,n,m}^+$ has trivial center. This implies that the kernels of both short exact sequences are in fact the centers of the groups in the middle. This establishes point $3$, and the last point is then immediate.  
\end{proof}

\begin{rmq}
The idea of extending the group $G/\langle stu\rangle$ into a Coxeter group is present in work of Coxeter~\cite[Section 4.1]{coxeter_tams}. 
\end{rmq}

\begin{rmq}\label{center_finite}
We do not know if $c$ has finite order in $W(k,n,m)$ or not when $W(k,n,m)$ is infinite. This is equivalent to determining if $stu$ has finite order or not in $G$. In Achar and Aubert's representation (see Section~\ref{rep}), the element $\rho(stu)$ has finite order since it is a scalar matrix with eigenvalue a product of three roots of unity, but we can use the observation made in Example~\ref{ex_unfaith} to see that in general the restriction of the representation to $\langle stu \rangle$ is unfaithful: again, consider the group $W(6,2,3)$. We have seen that $\rho(c)=\rho((x_1 x_2)^3)=1$, while $c$ cannot be equal to $1$ in $W(6,2,3)$.   
\end{rmq}

We end up the section with an observation on the solvability of the word problem in $W(k,n,m)$. Recall that a group has a solvable word problem if there exists an algorithm allowing one to determine in finite time if an arbitrary word represents the identity or not. Theorem~\ref{thm_alt} seems to be close to answering this question positively since $W(k,n,m)$ is a central extension of a subgroup of a Coxeter group (which therefore has a solvable word problem as Coxeter groups have a solvable word problem~\cite[Section 2.3.3]{AB}) with a cyclic group. Nevertheless, and as observed in the previous remark, the center is not clearly identified. Given any word in the generators of $W(k,n,m)$, we can take its image in $W_{k,n,m}^+$ and say whether it represents the identity or not, hence solve the problem of determining if the word we started with represents an element which lies in the center of $W(k,n,m)$ or not. Nevertheless, if it does lie in the center, it is not clear to us how to check if it represents the identity or not. 

\begin{question}\label{quest_solv} Do the groups $W(k,n,m)$ have a solvable word problem? \end{question}

We conjecture the answer to this question to be positive.

\section{Classification of toric reflection groups}\label{main}

The aim of this section is to show the following result
\begin{theorem}[Classification of toric reflection groups]\label{thm_clas} Let $k,k',n,n',m,m'\geq 2$ with $n<m$, $n'<m'$, $n,m$ coprime, and $n',m'$ coprime. Then $$W(k,n,m)\cong_{\mathrm{ref}} W(k',n',m') \Leftrightarrow (k,n,m)=(k',n',m').$$
\end{theorem}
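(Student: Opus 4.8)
The implication ``$\Leftarrow$'' is trivial, since if $(k,n,m)=(k',n',m')$ then the identity is a reflection isomorphism. For ``$\Rightarrow$'', suppose $\varphi\colon W(k,n,m)\to W(k',n',m')$ is a reflection isomorphism, so that $\varphi$ is a group isomorphism with $\varphi(R)=R'$; the plan is to recover each of $k,n,m$ from the pair $(W(k,n,m),R)$. First I would recover $k$. As $\varphi$ is a bijective group homomorphism sending $R$ onto $R'$, it maps the conjugacy classes of $W(k,n,m)$ contained in $R$ bijectively onto those of $W(k',n',m')$ contained in $R'$; thus the number of conjugacy classes of reflections is a reflection-isomorphism invariant. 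I claim this number is $k-1$. Indeed, by Corollary~\ref{coro_single_class} and Proposition~\ref{h_conjugacy}(2), every reflection of $W(k,n,m)\cong J\begin{pmatrix} k & n & m\\ ~& n& m\end{pmatrix}$ is conjugate inside the group to a non-trivial power $x_1^\ell=s^\ell$ with $1\le\ell\le k-1$, while by Lemma~\ref{lem:conj_class} the powers $s,s^2,\dots,s^{k-1}$ are pairwise non-conjugate already in the parent $J$-group $J\begin{pmatrix} k & n & m\\ ~& ~& ~\end{pmatrix}$, hence a fortiori in $W(k,n,m)$. So there are exactly $k-1$ conjugacy classes of reflections, and consequently $k=k'$.

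Next I would pass to central quotients. Being a group isomorphism, $\varphi$ carries $Z(W(k,n,m))$ onto $Z(W(k',n',m'))$ and therefore induces an isomorphism of the quotients. By Corollary~\ref{cor_cent} and Theorem~\ref{thm_alt} these quotients are, respectively, the alternating subgroups $W_{k,n,m}^+$ and $W_{k',n',m'}^+$ of the rank-three Coxeter groups whose diagrams are triangles with edge labels $k,n,m$ and $k',n',m'$. Hence $\varphi$ yields an abstract isomorphism $W_{k,n,m}^+\cong W_{k',n',m'}^+$. Since $k=k'$ is already known, it now suffices to prove that the multiset $\{k,n,m\}$ is an isomorphism invariant of the von Dyck group $W_{k,n,m}^+=\langle a,b\mid a^k=b^n=(ba^{-1})^m=1\rangle$. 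Granting this, $\{k,n,m\}=\{k',n',m'\}$; cancelling the common entry $k=k'$ leaves $\{n,m\}=\{n',m'\}$, and the normalisations $n<m$, $n'<m'$ then force $n=n'$ and $m=m'$, completing the proof.

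The remaining, and main, task is thus to read off the multiset $\{k,n,m\}$ from $W_{k,n,m}^+$. Geometrically these are the orders of the three rotation generators $a=r_1r_2$, $b=r_2r_3$ and $ba^{-1}=r_3r_1$, i.e. the orders of the vertex stabilisers of the triangle. The strategy is to recover them intrinsically through the finite subgroups of $W_{k,n,m}^+$: using the standard fact that every finite subgroup of a Coxeter group is contained in a conjugate of a finite standard parabolic subgroup, every finite subgroup of $W_{k,n,m}$ is conjugate into one of the rank-two dihedral parabolics $\langle r_1,r_2\rangle$, $\langle r_2,r_3\rangle$, $\langle r_3,r_1\rangle$ of orders $2k$, $2n$, $2m$; intersecting with the index-two subgroup $W_{k,n,m}^+$ produces cyclic rotation subgroups of orders $k$, $n$, $m$, and one checks that the conjugacy classes of maximal finite cyclic subgroups of $W_{k,n,m}^+$ carry, with multiplicity, exactly the orders $k,n,m$. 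I would organise the verification according to the sign of $\tfrac1k+\tfrac1n+\tfrac1m-1$.

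The hard part is this last analysis, specifically the infinite case, where I must rule out accidental isomorphisms between von Dyck groups with distinct vertex orders. When $W_{k,n,m}$ is finite (the spherical case) the claim is a finite check against Table~\ref{table_gzg}: the quotient $W_{k,n,m}^+$ is one of $\mathfrak{A}_4$, $\mathfrak{S}_4$, $\mathfrak{A}_5$ or a dihedral group, and once $k$ is known these determine $(n,m)$ uniquely. In the infinite hyperbolic regime the needed statement follows from the cocompact Fuchsian structure of the triangle group, in which finite subgroups are cyclic and the maximal ones are precisely the vertex stabilisers, so that their orders are canonically $\{k,n,m\}$; the Euclidean borderline --- which, under $\gcd(n,m)=1$, occurs only for $(k,n,m)=(6,2,3)$, the group $W(6,2,3)$ --- and the smallest parameter values must be handled separately, for instance by distinguishing the virtually abelian Euclidean group from the non-amenable hyperbolic ones. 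Carrying out this torsion and finite-subgroup analysis uniformly, and ideally within Coxeter theory rather than by appeal to hyperbolic geometry, is the crux of the proof.
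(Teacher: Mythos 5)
Your proposal is correct and follows the same architecture as the paper's proof: recover $k$ as one more than the number of conjugacy classes of reflections (via Lemma~\ref{lem:conj_class}, Proposition~\ref{h_conjugacy} and Corollary~\ref{coro_single_class}), pass to the central quotient and identify it with $W_{k,n,m}^+$ via Theorem~\ref{thm_alt}, recover the multiset $\{k,n,m\}$ from the conjugacy classes of maximal finite subgroups, and settle the finite case by inspection of Table~\ref{table_gzg}. The one genuine divergence is how you justify the key statement that $W_{k,n,m}^+$ has exactly three conjugacy classes of maximal finite subgroups, cyclic of orders $k$, $n$, $m$: you invoke the cocompact Fuchsian structure of the triangle group in the hyperbolic regime and propose separate treatment of the Euclidean borderline $(k,n,m)=(6,2,3)$, whereas the paper proves this uniformly within Coxeter theory (Proposition~\ref{prop_max_fg}), with no hyperbolic/Euclidean case split. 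The Coxeter-theoretic route rests on two inputs that your ``one checks'' glosses over: Proposition~\ref{prop_BH}, that every finite subgroup is conjugate into a finite standard parabolic --- which needs a small extra argument to conjugate \emph{inside} $W_{k,n,m}^+$ rather than $W_{k,n,m}$ (if the conjugator $w$ is odd, replace it by $sw$ for some $s\in J$) --- and Krammer's result~\cite[Corollary 3.1.7]{Krammer} that distinct maximal finite standard parabolics are non-conjugate, which via Proposition~\ref{cox_alt} and the fact that intersections of parabolic subgroups are parabolic yields both the maximality of the three subgroups $W_J^+$ in $W_{k,n,m}^+$ and the distinctness of their classes even when some of $k,n,m$ coincide; this distinctness is precisely what makes the multiset, counted with multiplicity, a well-defined invariant. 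Your geometric appeal is a legitimate alternative for the hyperbolic triangle groups, and the $(2,3,6)$ Euclidean von Dyck group admits the same description of its maximal finite subgroups, so your sketch does close to a complete proof; the paper's version is exactly the uniform, purely Coxeter-theoretic treatment that you yourself flag as the desirable way to carry out the crux.
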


To this end, we will require the description of the quotient of a toric reflection group by its center from the previous section, together with the following result:

\begin{prop}[{\cite[Chap. V, Sec. 4, Exercise 2]{Bourbaki} or \cite[Proposition 1.3]{BH}}]\label{prop_BH}
Let $(W,S)$ be a Coxeter system and $H\subseteq W$ a finite subgroup of $W$. Then there exists $w\in W$ and $J\subseteq S$ such that $W_J$ is finite and $wHw^{-1}\subseteq W_J$. 
\end{prop}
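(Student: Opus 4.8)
The plan is to prove this via the geometric (Tits cone) realization of $(W,S)$ together with a barycenter argument, following the approach behind~\cite[Chap.~V, Sec.~4, Exercise~2]{Bourbaki}. First I would set up the geometric representation of $W$ on a real vector space $V$ equipped with its canonical bilinear form, pass to the contragredient action on $V^*$, and consider the open fundamental chamber $C \subseteq V^*$ together with the Tits cone $U = \bigcup_{w \in W} w\overline{C}$. The three ingredients I would extract from Bourbaki are: (i) $U$ is a convex cone and $\overline{C}$ is a strict fundamental domain for the $W$-action on $U$; (ii) for $x \in \overline{C}$ one has $\mathrm{Stab}_W(x) = W_{S_x}$, where $S_x = \{s \in S : s\cdot x = x\}$; and (iii) a point $x \in U$ has \emph{finite} stabilizer precisely when $x$ lies in the topological interior $U^\circ$ of $U$.

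The core of the argument is then a fixed-point construction. Since $C$ is an open subset of $V^*$ contained in $U$, we have $C \subseteq U^\circ$, so I would pick any $x_0 \in C$. Because the $W$-action on $V^*$ is linear and preserves the convex cone $U$, it preserves $U^\circ$, and $U^\circ$ is itself convex. Hence the barycenter $\bar x := \frac{1}{|H|}\sum_{h \in H} h\cdot x_0$ is a convex combination of the points $h\cdot x_0 \in U^\circ$, so $\bar x \in U^\circ$; and re-indexing the sum shows $h'\cdot \bar x = \bar x$ for every $h' \in H$, so that $H \subseteq \mathrm{Stab}_W(\bar x)$. By~(iii), $\mathrm{Stab}_W(\bar x)$ is finite.

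To finish I would transport $\bar x$ into the fundamental domain: by~(i) there is $w \in W$ with $w\cdot \bar x \in \overline{C}$, and $w\cdot\bar x \in U^\circ$ as well since $W$ preserves $U^\circ$. Setting $J := S_{w\cdot \bar x}$, fact~(ii) gives $\mathrm{Stab}_W(w\cdot\bar x) = W_J$, which is finite by~(iii). Since $\mathrm{Stab}_W(w\cdot\bar x) = w\,\mathrm{Stab}_W(\bar x)\,w^{-1} \supseteq wHw^{-1}$, we obtain $wHw^{-1} \subseteq W_J$ with $W_J$ finite, exactly as required.

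The step I expect to be the main obstacle is ingredient~(iii): the precise equivalence between finiteness of a point stabilizer and membership in the topological interior of the Tits cone. This is where the geometry of $U$ genuinely enters, and it is what forces the barycenter to have a \emph{finite} (rather than merely parabolic) stabilizer; everything else is formal once (i)--(iii) are in hand. An alternative route that sidesteps this point is to let $W$ act properly by isometries on its Davis complex, which is a complete $\mathrm{CAT}(0)$ space: the Bruhat--Tits fixed-point theorem then yields a point fixed by the finite group $H$, and since the stabilizers of points are exactly the finite parabolic subgroups $wW_Jw^{-1}$, one reaches the same conclusion.
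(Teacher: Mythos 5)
Your proof is correct: the paper does not prove Proposition~\ref{prop_BH} itself but quotes it from Bourbaki and Brink--Howlett, and your Tits-cone argument---barycenter of an $H$-orbit of a point of the open chamber, convexity of the interior of the Tits cone, and the characterization of that interior as the set of points with finite stabilizer, then transport into $\overline{C}$ where stabilizers are standard parabolics---is precisely the standard proof behind the cited Bourbaki exercise. The $\mathrm{CAT}(0)$/Davis-complex alternative you mention is an equally valid well-known substitute, so there is nothing to fix.
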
 

\begin{cor}\label{max_alt}
Let $(W,S)$ be a Coxeter system. Every increasing chain $$H_1 \subseteq H_2 \subseteq H_3 \subseteq \cdots $$ of finite subgroups $(H_i)_{i\geq 1}$ stabilizes, \emph{i.e.}, there is $n\geq 1$ such that $H_{n+p}=H_n$ for all $p\geq 1$. 
\end{cor}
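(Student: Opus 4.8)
The plan is to reduce the statement to a single uniform bound on the orders of finite subgroups of $W$, and then to observe that an increasing chain of subgroups whose orders are bounded must stabilize. The key input is Proposition~\ref{prop_BH}, which says that every finite subgroup is contained in a conjugate of a finite standard parabolic subgroup, combined with the fact that $S$ is finite.

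First I would bound $|H_i|$ uniformly. For each $i\geq 1$, since $H_i$ is a finite subgroup of $W$, Proposition~\ref{prop_BH} provides $w_i\in W$ and $J_i\subseteq S$ with $W_{J_i}$ finite and $w_i H_i w_i^{-1}\subseteq W_{J_i}$; in particular $|H_i|=|w_i H_i w_i^{-1}|\leq |W_{J_i}|$. Because $S$ is finite there are only finitely many subsets $J\subseteq S$, hence only finitely many standard parabolic subgroups, and a fortiori only finitely many finite ones. Setting $N:=\max\{|W_J|\ :\ J\subseteq S,\ W_J \text{ finite}\}$, which is a finite quantity (the set is nonempty since $W_\emptyset=1$), we obtain $|H_i|\leq N$ for every $i\geq 1$.

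Next I would conclude from this bound. The sequence $(|H_i|)_{i\geq 1}$ is non-decreasing, since $H_i\subseteq H_{i+1}$ forces $|H_i|\leq |H_{i+1}|$, and it is bounded above by $N$. A non-decreasing sequence of positive integers bounded above is eventually constant, so there is $n\geq 1$ with $|H_{n+p}|=|H_n|$ for all $p\geq 0$. For such an $n$ and any $p\geq 1$ we then have $H_n\subseteq H_{n+p}$ with $|H_n|=|H_{n+p}|<\infty$, which forces $H_{n+p}=H_n$. This is exactly the desired stabilization.

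There is essentially no real obstacle here beyond correctly invoking Proposition~\ref{prop_BH}; the one point worth stating carefully is that $N$ is a single constant \emph{independent of $i$}. Each $H_i$ may well sit inside a different conjugate $w_i W_{J_i}w_i^{-1}$ of a different finite parabolic, but the \emph{orders} of these parabolics range only over the finite set $\{|W_J| : J\subseteq S,\ W_J \text{ finite}\}$, and it is the finiteness of this set of orders—guaranteed by $|S|<\infty$—that makes the uniform bound available.
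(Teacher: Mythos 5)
Your proof is correct and follows exactly the paper's argument: invoke Proposition~\ref{prop_BH} to bound the order of every finite subgroup by $N:=\max\{|W_J| : J\subseteq S,\ W_J \text{ finite}\}$, which exists since $S$ is finite, and conclude that the chain stabilizes. You merely spell out in more detail the final step (non-decreasing bounded orders are eventually constant, and containment plus equal finite order forces equality), which the paper leaves implicit.
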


\begin{proof}
By Proposition~\ref{prop_BH}, the cardinality of a finite subgroup of $W$ is bounded by $$N:=\max\{ |W_J|~|~J\subseteq S, ~W_J~\text{is finite} \},$$ which is well-defined since $S$ is finite. 
\end{proof}

\begin{definition}
Let $G$ be a group. A subgroup $H\subseteq G$ is \defn{maximal finite} if $H$ is finite and if for every subgroup $H'\subseteq G$, $$H\subsetneq H' \ \Rightarrow \ H' \ \text{is infinite}.$$
\end{definition}

Note that maximal finite subgroups need not exist in general. By Corollary~\ref{max_alt}, maximal finite subgroups always exist in Coxeter groups, and we even have the stronger statement that every finite subgroup of a Coxeter group is included in a maximal finite subgroup: otherwise one could build an increasing chain of finite subgroups which does not stabilize. Note that being a maximal finite subgroup is a property which is stable by conjugation.
 
\begin{prop}\label{cox_alt}
Let $(W,S)$ be a Coxeter system and $$M_W:=\{ I\subseteq S~|~\forall J\subseteq S, I\subsetneq J \Rightarrow W_J~\text{is infinite}\}.$$ Then $\{ W_I~|~I\in M_W\}$ is a set of representatives of the conjugacy classes of maximal finite subgroups of $W$.  
\end{prop}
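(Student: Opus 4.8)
The plan is to combine Proposition~\ref{prop_BH} (every finite subgroup is conjugate into a finite standard parabolic) with Corollary~\ref{max_alt} (finite subgroups satisfy the ascending chain condition, so maximal finite subgroups exist). Throughout I read $M_W$ as the set of \emph{maximal spherical} subsets, i.e.\ those $I$ with $W_I$ \emph{finite} and $W_J$ infinite for every $J\supsetneq I$; the finiteness of $W_I$ must be part of the reading, since otherwise $I=S$ would lie in $M_W$ whenever $W$ is infinite. The first step is to reduce the statement to one about \emph{parabolic} subgroups: I claim the maximal finite subgroups of $W$ are exactly the maximal elements, under inclusion, of the family $\mathcal{F}$ of finite parabolic subgroups of $W$. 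Indeed, if $H$ is maximal finite then by Proposition~\ref{prop_BH} we have $H\subseteq gW_Jg^{-1}$ with $W_J$ finite, and maximality forces $H=gW_Jg^{-1}\in\mathcal{F}$; conversely, if $P\in\mathcal{F}$ is maximal in $\mathcal{F}$ and $P\subseteq H$ with $H$ finite, then $H\subseteq gW_Jg^{-1}\in\mathcal{F}$ by Proposition~\ref{prop_BH}, so $P=gW_Jg^{-1}=H$ by maximality of $P$ in $\mathcal{F}$.

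The easy half of the correspondence comes next. Let $H$ be a maximal finite subgroup; by the previous step $H=gW_Jg^{-1}$ for some finite $W_J$. If $J$ were not maximal spherical, say $J\subsetneq J'$ with $W_{J'}$ finite, then $H=gW_Jg^{-1}\subsetneq gW_{J'}g^{-1}$ would be a proper inclusion of finite parabolics contradicting maximality of $H$. Hence $J\in M_W$, so every conjugacy class of maximal finite subgroups contains some $W_I$ with $I\in M_W$. For distinctness of the representatives I would use that a maximal spherical $W_I$ is the full pointwise stabilizer of the corresponding vertex of the Coxeter complex, on which $W$ acts preserving the type (cotype $I$) of cells; thus $W_I$ and $W_{I'}$ can be conjugate only if $I=I'$.

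The remaining and main point is the converse: for $I\in M_W$ the group $W_I$ is itself maximal in $\mathcal{F}$. The obstacle is that $W_I$, while maximal among finite \emph{standard} parabolics, could a priori sit properly inside a \emph{conjugate} $P=gW_Jg^{-1}$ of a finite standard parabolic. To rule this out, suppose $W_I\subseteq P$ with $P\in\mathcal{F}$. Since a parabolic subgroup contained in a parabolic subgroup $P$ is parabolic in $P$, and in a finite Coxeter group every parabolic subgroup is conjugate to a standard one, $W_I$ is $W$-conjugate to a standard parabolic $W_K$ with $W_K\subseteq W_J$; if the inclusion $W_I\subseteq P$ were proper then $|W_K|=|W_I|<|W_J|$ would force $K\subsetneq J$. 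The problem thus reduces to the conjugacy-invariance statement: if two standard parabolics $W_I,W_K$ are $W$-conjugate, then $I\in M_W \iff K\in M_W$. This is the step I expect to be hardest. I would establish it using the description of a standard parabolic as the pointwise stabilizer of the relative interior of the face of cotype $I$ of the fundamental chamber in the Tits cone, together with the type-preserving action of $W$ on the Coxeter complex: a conjugacy $W_I=xW_Kx^{-1}$ identifies the two face-stabilizers, and the spherical subsets $J\supsetneq I$ correspond precisely to the finite-stabilizer faces strictly containing the face of cotype $I$ in their closure, a correspondence preserved under conjugation. Granting this, $K\subsetneq J$ with $W_J$ finite would contradict $I\in M_W$, so no proper inclusion can occur and $W_I$ is maximal in $\mathcal{F}$, hence maximal finite. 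Combining the three steps shows that $\{W_I\mid I\in M_W\}$ is exactly a set of representatives of the conjugacy classes of maximal finite subgroups of $W$.
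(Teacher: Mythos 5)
Your overall architecture is sound and runs parallel to the paper's: the reduction via Proposition~\ref{prop_BH} to maximal elements of the family $\mathcal{F}$ of finite parabolics, the observation that every maximal finite subgroup is conjugate to some $W_I$ with $I\in M_W$, and the step producing $K\subseteq J$ with $W_K$ conjugate to $W_I$ inside a finite parabolic are all correct (the paper does this last step via Kilmoyer's theorem, \cite[Lemma 2.25]{AB}, applied to the minimal-length double coset representative, which is essentially your "parabolic of a finite parabolic" move). You were also right to flag the implicit finiteness of $W_I$ in the definition of $M_W$, and you correctly isolated the crux of the whole proposition: a rigidity statement for conjugacy of standard parabolic subgroups at maximal spherical subsets.

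But that crux is exactly where your argument has a genuine gap, and it appears twice: in your distinctness step and in the "correspondence preserved under conjugation" at the end. Both rest on the false principle that, for the type-preserving action on the Coxeter complex (or Tits cone), conjugate face-stabilizers force equal types. The map from simplices to their stabilizers is not injective (any coset $wW_I$ with $w$ in the normalizer of $W_I$ has stabilizer $W_I$), and conjugate standard parabolics of \emph{different} types exist in general: in $W=\mathfrak{S}_3$ with $S=\{s,t\}$, the subgroups $W_{\{s\}}$ and $W_{\{t\}}$ are conjugate, yet your type argument---which nowhere uses maximality of $I$---would "prove" they are not. A conjugation $W_I=xW_Kx^{-1}$ only shows that the cotype-$I$ face of the fundamental chamber and the face $x\cdot C_K$ (still of cotype $K$, by type-preservation) share a pointwise stabilizer; it does not place them in the same $W$-orbit, nor identify the collections of finite-stabilizer faces adjacent to them, which is what your sketch needs. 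The maximality of $I$ is precisely the missing ingredient, and the paper imports it as a black box from \cite[Corollary 3.1.7]{Krammer}: if $I$ is maximal among spherical subsets and $W_I$ is conjugate to $W_{I'}$, then $I=I'$. Granting that nontrivial result, both your distinctness step and your invariance claim (indeed the stronger conclusion $I=K$, which immediately contradicts $K\subsetneq J$) follow at once---but the geometric argument you offer in its place does not establish it.
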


\begin{proof}
By Proposition~\ref{prop_BH}, if $H\subseteq W$ is finite, then $w H w^{-1} \subseteq W_J$ for some $w\in W$ and some $J\subseteq S$ such that $W_J$ is finite. To conclude the proof, it therefore suffices to show that for $I\in M_W$, the subgroup $W_I$ is maximal finite, and that for $I\neq J$ where $I,J\in M_W$, the subgroups $W_I$ and $W_J$ are not conjugate to each other. This last property is a consequence of~\cite[Corollary 3.1.7]{Krammer}, as $I$ is a maximal subset of $S$ such that $W_I$ is finite. Now let us show that $W_I$, $I\in M_W$, is maximal finite. Let $W_I\subseteq H$ such that $H$ is a finite subgroup of $W$. By Proposition~\ref{prop_BH}, there is $J\subseteq S$ and $w\in W$ such that $W_J$ is finite and $W_I\subseteq H \subseteq w^{-1} W_J w$. Setting $w_1$ for the unique element of minimal length in $W_J w W_I$, we have $I\cap w_1^{-1} J w_1=I$ by~\cite[Lemma 2.25]{AB}, hence $I\subseteq w_1^{-1} J w_1$. Let $I'\subseteq J$ such that $I=w_1 I' w_1^{-1}$. Then since $W_I$ and $W_{I'}$ cannot be conjugate to each other if $I\neq I'$ because of the maximality of $I$ (again by~\cite[Corollary 3.1.7]{Krammer}), we have $I'=I$, hence $I\subseteq J$, which forces $I=J$ since $W_J$ is finite.  Since $W_I$ and $w^{-1} W_J w=w^{-1} W_I w$ have the same cardinality and $W_I\subseteq H\subseteq w^{-1} W_J w$, we get $H=W_I$.      
\end{proof}

Note that, since every increasing chain of finite subgroups of a Coxeter group $W$ stabilizes, the same is true for any subgroup of $W$, in particular it holds true for $W^+$. In particular, there are maximal finite subgroups in $W^+$, and every finite subgroup of $W^+$ is included in a maximal finite one. 

\begin{prop}\label{prop_max_fg}
Let $k,n,m\geq 2$ be such that $W_{k,n,m}$ is infinite. There are three conjugacy classes of maximal finite subgroups of $W_{k,n,m}^+$. The finite groups in these three classes are isomorphic to $C_k$, $C_n$, $C_m$, where $C_i$ denotes the cyclic group of order $i$. 
\end{prop}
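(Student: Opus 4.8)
The plan is to reduce the determination of the maximal finite subgroups of $W_{k,n,m}^+$ to that of the ambient Coxeter group $W:=W_{k,n,m}$, which is controlled by Proposition~\ref{cox_alt}, and then to rule out nesting between the resulting cyclic subgroups using the planar action of $W$.

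First I would identify the maximal finite subgroups of $W$ itself. Since $W$ is infinite of rank three while every parabolic of rank at most two is finite---the rank-two ones being the dihedral groups $W_{\{r_1,r_2\}}\cong I_2(k)$, $W_{\{r_2,r_3\}}\cong I_2(n)$, $W_{\{r_3,r_1\}}\cong I_2(m)$---the maximal subsets $I\subseteq S$ with $W_I$ finite are precisely the three two-element subsets. Hence, by Proposition~\ref{cox_alt}, the three groups $W_{\{r_1,r_2\}}$, $W_{\{r_2,r_3\}}$, $W_{\{r_3,r_1\}}$ form a complete set of representatives of the conjugacy classes of maximal finite subgroups of $W$, and they are pairwise non-conjugate.

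Next I would transfer this to $W^+$. As $W^+\trianglelefteq W$ has index two, conjugation by any $g\in W$ preserves $W^+$, so for every $I\subseteq S$ one has $gW_Ig^{-1}\cap W^+=g(W_I\cap W^+)g^{-1}$. Moreover, since an $S$-reduced word for $w\in W_I$ only uses letters of $I$, the parities of $\ell_S$ and of the length in $(W_I,I)$ agree on $W_I$, whence $W_I\cap W^+=W_I^+$; for a dihedral $W_I\cong I_2(p)$ this is the rotation subgroup, cyclic of order $p$. Thus the three groups $W_{\{r_1,r_2\}}^+\cong C_k$, $W_{\{r_2,r_3\}}^+\cong C_n$, $W_{\{r_3,r_1\}}^+\cong C_m$ occur. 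Now given any finite $H\subseteq W^+$, Proposition~\ref{prop_BH} places $H$ inside some finite $gW_Jg^{-1}$, hence inside a conjugate $gMg^{-1}$ of one of the three maximal dihedral parabolics $M$; intersecting with $W^+$ yields $H=H\cap W^+\subseteq gM^+g^{-1}$, a conjugate of one of $C_k,C_n,C_m$. In particular, a maximal finite subgroup of $W^+$ is contained in such a conjugate and therefore, by maximality, equals it.

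It remains to show that $C_k,C_n,C_m$ are themselves maximal finite and that their three conjugacy classes are distinct; both follow once I prove that the $W$-conjugates of $C_k,C_n,C_m$ form an antichain under inclusion, and \emph{this is the main obstacle}. Here I would invoke the geometric realization: being infinite with finite edge labels, $W$ acts faithfully and properly by isometries on the Euclidean plane $\mathbb{E}^2$ (when $\tfrac1k+\tfrac1n+\tfrac1m=1$) or the hyperbolic plane $\mathbb{H}^2$ (when $\tfrac1k+\tfrac1n+\tfrac1m<1$) as the triangle group with angles $\pi/k,\pi/n,\pi/m$, the vertex stabilizers being the maximal dihedral parabolics and their rotation subgroups being exactly the conjugates of $C_k,C_n,C_m$. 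A nontrivial element of such a rotation subgroup is an elliptic isometry with a single fixed point, the corresponding vertex. So if $C_k\subseteq gC_jg^{-1}$, then a generator of $C_k$ equals a power of a generator $\gamma$ of $gC_jg^{-1}$; this power is a nontrivial rotation, forcing $\gamma$ (itself an orientation-preserving finite-order isometry, since $gC_jg^{-1}\subseteq W^+$) to fix the same vertex, hence to lie in its full rotation subgroup $C_k$. Thus $gC_jg^{-1}\subseteq C_k$ and equality holds, so no conjugate of $C_k,C_n,C_m$ properly contains $C_k$, proving $C_k$ (and symmetrically $C_n,C_m$) maximal finite. Finally, a relation $gC_jg^{-1}=C_i$ would make the associated vertex stabilizers---the maximal dihedral parabolics---conjugate in $W$, contradicting their pairwise non-conjugacy from Proposition~\ref{cox_alt} unless $i=j$. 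Therefore $W_{k,n,m}^+$ has exactly three conjugacy classes of maximal finite subgroups, represented by $C_k$, $C_n$, and $C_m$.
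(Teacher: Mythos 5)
Your proposal is correct in substance, but it settles the crucial step by a genuinely different mechanism than the paper. Both arguments share the same skeleton: use Proposition~\ref{prop_BH} (through Proposition~\ref{cox_alt}) to trap any finite subgroup of $W^+$ inside a conjugate of one of the three rank-two dihedral parabolics, and identify $W_J\cap W^+=W_J^+$ as the rotation subgroup $C_k$, $C_n$, or $C_m$. Where you diverge is in proving that these cyclic subgroups are maximal finite and pairwise non-conjugate. You pass to the Euclidean or hyperbolic triangle-group action and exploit the unique fixed point of a nontrivial elliptic isometry to show that the conjugates of $C_k,C_n,C_m$ form an antichain under inclusion. The paper instead stays purely algebraic: if $W_J^+\subsetneq H$ with $H$ finite, it places $H$ in a finite parabolic $xW_{J'}x^{-1}$, notes that $t_1t_2\in W_J\cap xW_{J'}x^{-1}$ and that an intersection of parabolic subgroups is again parabolic (\cite[Lemma 2.25]{AB}), so this intersection has rank two and maximality forces $W_J=xW_{J'}x^{-1}$, hence $H=W_J^+$; non-conjugacy of distinct maximal standard parabolics is then \cite[Corollary 3.1.7]{Krammer} via Proposition~\ref{cox_alt}. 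Your route is geometrically transparent, but it imports as black boxes the facts that the action on $\mathbb{E}^2$ or $\mathbb{H}^2$ is faithful and proper and that vertex stabilizers are exactly the conjugates of the dihedral parabolics with rotation subgroups the conjugates of $C_k,C_n,C_m$; these are standard for triangle groups but would need a citation. The paper's route is self-contained within combinatorial Coxeter theory and does not depend on the existence of a planar model.

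One point you should patch: the proposition counts conjugacy classes \emph{in $W^+$}, while your argument only produces, for a maximal finite $H\subseteq W^+$, an element $g\in W$ with $H=gM^+g^{-1}$, and rules out fusion of the three classes under $W$-conjugacy. To get exactly three $W^+$-classes you must also check that all $W$-conjugates of a fixed $M^+$ are already $W^+$-conjugate. This is a one-line fix using your own observation that $M^+\trianglelefteq M$: if $g\notin W^+$, pick $t\in M\setminus M^+$; then $gM^+g^{-1}=(gt)M^+(gt)^{-1}$ with $gt\in W^+$. The paper makes exactly this adjustment (replacing $w$ by $sw$ with $s\in J$) at the corresponding stage of its proof.
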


\begin{proof}
Note that under these assumptions, the Coxeter group $W=W_{k,n,m}$ is irreducible. Moreover, in this case $M_W$ consists of the three subsets of $S$ of cardinality two as $|S|=3$ and $W$ is infinite. Let $H\subseteq W_{k,n,m}^+$ be a finite subgroup of $W_{k,n,m}^+$. We claim that $H$ is a conjugate (in $W_{k,n,m}^+$) of a subgroup of the alternating subgroup $W_J^+$ of a standard parabolic subgroup $W_J$ of rank two of $W_{k,n,m}$. If $H=1$ the claim is trivially true, hence assume that $H\neq 1$. By Proposition~\ref{prop_BH}, there is $w\in W_{k,n,m}$ such that $wHw^{-1}\subseteq W_J$ for some $J\subseteq S$ such that $W_J$ is finite. If $w\in W_{k,n,m}^+$, then $H$ and $wHw^{-1}$ are conjugate in $W_{k,n,m}^+$, hence the claim holds true. If $w\notin W_{k,n,m}^+$, then since $H\neq 1$ we have $J\neq \emptyset$, so let $s\in J$. Then $sw\in W_{k,n,m}^+$ and $sw H w^{-1}s\subseteq s W_J s= W_J$, hence $H$ is again conjugate in $W_{k,n,m}^+$ to a subgroup of $W_J$. In all cases, since $1\neq H\subseteq W_{k,n,m}^+$ and $W_J$ is finite, we must have $|J|=2$ since for $|J|=1$ we have $W_J\cap W_{k,n,m}^+=\{1\}$. Hence $H$ is conjugate to a subgroup of the alternating subgroup of one of the three standard parabolic subgroups of rank $2$ of $W_{k,n,m}$. Note that these parabolic subgroups are dihedral groups of order $2k, 2n$, or $2m$. It follows that there are at most three conjugacy classes of maximal finite subgroups of $W_{k,n,m}^+$, since if $H$ is maximal finite, then it has to be a conjugate of one of these alternating subgroups, namely $C_k$, $C_n$ or $C_m$.  

To conclude the proof, it therefore remains to show that the above three alternating subgroups of standard parabolic subgroups of rank two of $W_{k,n,m}$ are maximal finite subgroups of $W_{k,n,m}^+$ which are not conjugate to each other---in fact we shall show that they are not conjugate in $W_{k,n,m}$, which is stronger. Maximality follows again from Proposition~\ref{prop_BH}: let $J\subseteq S$ with $|J|=2$. Then if $W_J^{+}\subseteq W_J$ is not maximal finite inside $W_{k,n,m}^+$, then there is a finite subgroup $H$ of $W_{k,n,m}^+$ such that $W_J^+\subsetneq H$. By Proposition~\ref{prop_BH}, the subgroup $H$ is included in a finite parabolic subgroup $x W_{J'} x^{-1}$ and up to enlarging $J'$ we can assume that $|J'|=2$ (alternatively we can use the fact that every finite subgroup of $W_{k,n,m}$ is included in a maximal finite one together with Proposition~\ref{cox_alt}). Writing $J=\{t_1, t_2\}$, we have $t_1 t_2\in W_J^{+}\subseteq H\subseteq x W_{J'} x^{-1}$. Now the intersection of two parabolic subgroups is again a parabolic subgroup (see for instance~\cite[Lemma 2.25]{AB}), hence we have that $W_J\cap x W_{J'} x^{-1}$ is finite parabolic of rank at most two. But since it contains $t_1 t_2$, it must have rank $2$. As rank two parabolic subgroups are maximal finite by Proposition~\ref{cox_alt}, we have that both $W_J$, $x W_{J'} x^{-1}$ and $W_J\cap x W_{J'} x^{-1}$ are maximal finite, which forces $W_J=x W_{J'} x^{-1}$. Hence $W_J^{+}\subseteq H \subseteq W_J$, which forces $H=W_J^+$ as $H\subseteq W_{k,n,m}^+$. Hence $W_J^+$ is maximal finite. Now for $J_1\neq J_2$ with $J_i\subseteq S$ and $|J_1|=2=|J_2|$, then $W_{J_1}^+$ and $W_{J_2}^+$ cannot be conjugate to each other in $W$: if this was the case, there would be $x\in W$ such that $W_{J_1}^+ \subseteq x W_{J_2} x^{-1}$, and arguing as above (with $J=J_1$ and $J'=J_2$) we would get that $W_{J_1}$ and $W_{J_2}$ are conjugate to each other, contradicting Proposition~\ref{cox_alt}. 
\end{proof}

\begin{proof}[Proof of Theorem~\ref{thm_clas}]
Assume $W(k,n,m)\cong_{\mathrm{ref}} W(k',n',m')$. We first claim that $k-1$ is the number of conjugacy classes of reflections in $W(k,n,m)$. Indeed, all generators $x_i$'s are conjugate to each other, and by Theorem~\ref{part_I_main}, we have that $x_1$ is the generator $s$ of the isomorphic $J$-group $H=J\begin{pmatrix} k & n & m\\ ~& n& m\end{pmatrix}\trianglelefteq J\begin{pmatrix} k & n & m\\ ~& ~& ~\end{pmatrix}=G$. We have that $s$ has order $k$ in $G$. To conclude, since reflections in $W(k,n,m)$ are defined to be the conjugates of the non-trivial powers of the $x_i$'s, it therefore suffices to show that no two reflections in $\{s, s^2, \dots, s^{k-1}\}$ are conjugate to each other. This holds true in $G$ by Lemma~\ref{lem:conj_class}, hence it holds true a fortiori in $H$. Since reflection isomorphisms map reflections to reflections, we deduce that $k=k'$.

Now, by Theorem~\ref{thm_alt}, the group $\overline{W}(k,n,m)$ is the quotient of $W(k,n,m)$ by its center. We therefore have $$\overline{W}(k,n,m)\cong \overline{W}(k,n',m').$$ In particular, we have $W_{k,n,m}^+\cong W_{k,n',m'}^+$. We will conclude by showing that this forces $n=n'$ and $m=m'$. We first assume that $W_{k,n,m}$ is infinite, which forces $W_{k,n',m'}$ to be also infinite since alternating subgroups of Coxeter groups are subgroups of index two. By Proposition~\ref{prop_max_fg} above, there are three conjugacy classes of maximal finite subgroups of $W_{k,n,m}^+$ (resp. $W_{k,n',m'}^+$), and the isomorphism type of these finite subgroups is given by $C_k, C_n$, and $C_m$ (resp. $C_k, C_{n'}$, and $C_{m'}$). The multiset of isomorphism type of subgroups in conjugacy classes of maximal finite subgroups is obviously invariant under isomorphism, which implies that the multisets $\{k,n,m\}$ and $\{k,n',m'\}$ are equal. Since $n<m$ and $n'<m'$, this forces $n=n'$ and $m=m'$.

We now assume that $W_{k,n,m}$ (and hence $W_{k,n',m'}$) is finite. We then deduce from Table~\ref{table_gzg} that no two groups $W_{k,n,m}^+$ and $W_{k,n',m'}^+$ are isomorphic when $(n',m')\neq (n,m)$. Hence $n=n'$ and $m=m'$.  
\end{proof}

\begin{rmq}
Mimicking the definition given for finite complex reflection groups in~\cite{BMR}, one can define a Hecke algebra $\mathcal{H}_{k,n,m}$ of a toric reflection group $W(k,n,m)$ over a suitable base ring, deforming the group algebra $\mathbb{Z}[W(k,n,m)]$. It is tempting to conjecture that this algebra is a free module over this ring, with a basis deforming the basis of $\mathbb{Z}[W(k,n,m)]$ given by the elements of the group $W(k,n,m)$. However, already in the finite case, the proof of the BMR freeness conjecture for those finite complex reflection groups which are toric reflection groups is case-by-case.  
\end{rmq}

\begin{rmq}\label{interpret_param}
The parameter $k-1$ is the number of conjugacy classes of reflections in $W(k,n,m)$. It would be desirable to have an interpretation of $n$ and $m$ in terms of the "reflection group" structure of $W(k,n,m)$. When $W(k,n,m)$ is finite, the parameter $n$ is the reflection rank of $W(k,n,m)$, that is, the minimal number of reflections which are needed to generate $W(k,n,m)$. We can conjecture that this still holds true for arbitrary toric reflection groups. This would be a first step towards a proof of Theorem~\ref{thm_clas} avoiding a recourse to the theory of Coxeter groups---which has other advantages, as for instance rank three Coxeter groups have nice geometric realizations. It would also reprove that the \textit{meridional rank} of the torus knot $T_{n,m}$ ($n<m$) is equal to $n$ (this fact is proven in~\cite{merid_torus} in the context of the meridional rank conjecture): it is indeed at most $n$ since $G(n,m)$ has its classical presentation having exactly $n$ meridians as generators, and if it was smaller, then because of the surjection $G(n,m)\twoheadrightarrow W(k,n,m)$ which maps meridians to reflections, the groups $W(k,n,m)$ could then be generated by less than $n$ reflections. As other $J$-groups might be reflection quotients of other link groups, this could be of interest, even if in the specific case of torus knot groups everything seems to be already known.    
\end{rmq}

\textbf{Acknowledgments.} I thank Anne-Marie Aubert for an email exchange on $J$-groups. I thank Jean Michel for pointing out reference~\cite{MM} to my attention. I thank Gunter Malle for his careful reading of the manuscript and suggestions. The unfaithfulness of Achar and Aubert's representation, discussed in Section~\ref{rep}, was easy to establish after noticing that Coxeter's representation from~\cite{coxeter_factor} is unfaithful: this last property was noticed during discussions with Vincent Beck on truncated braid groups. I also thank him.

\end{document}